\newcommand{\la}{\lambda}
\newcommand{\R}{\mathbb{R}}
\newcommand{\N}{\mathbb{N}}
\newcommand{\E}{\mathbb{E}}
\newcommand{\PP}{\mathbb{P}}
\newcommand{\one}{\mathds{1}}
\newcommand{\tx}{{\tt x}}
\newcommand{\f}{ \mathbf{f}}
\newcommand{\Y}{\mathcal{Y}}
\newcommand{\Z}{\mathcal{Z}}
\newcommand{\U}{\mathcal{U}}
\newcommand{\om}{\omega}
\newcommand{\Om}{\Omega}
\newcommand{\ftn}{\mathcal{F}}
\newcommand{\bu}{{\bf u}}
\newcommand{\equa}{\begin{eqnarray*}}
\newcommand{\tion}{\end{eqnarray*}}
\newcommand{\equal}{\begin{eqnarray}}
\newcommand{\tionl}{\end{eqnarray}}
\newcommand{\m}{\mathbbm{m}}
\newcommand{\DD}{{\mathbb{D}_{1,2}}}
\newcommand{\les}{\hspace{-2em}}
\newcommand{\vare}{\varepsilon}
\newcommand{\al}{\alpha}
\newcommand{\ul}[1]{{\underline {#1}}}
\newcommand{\ol}[1]{{\overline {#1}}}
\newcommand{\A}{\bf A}
\newcommand{\kla}{\left ( }
\newcommand{\mer}{\right ) }
\newcommand{\non}{\nonumber }
\theoremstyle{plain}
\newtheorem{thm}{Theorem}[section]
\newtheorem{lemma}[thm]{Lemma}
\newtheorem{rem}[thm]{Remark}
\newtheorem{assumption}[thm]{Assumption}
\newtheorem{example}[thm]{Example}
\def\timenow{\@tempcnta\time
\@tempcntb\@tempcnta
\divide\@tempcntb60
\ifnum10>\@tempcntb0\fi\number\@tempcntb
:\multiply\@tempcntb60
\advance\@tempcnta-\@tempcntb
\ifnum10>\@tempcnta0\fi\number\@tempcnta}
\newcommand{\ch}{}
\title{Existence, Uniqueness and Malliavin  Differentiability  of  L\'evy-driven BSDEs with locally Lipschitz Driver}
\author{Christel Geiss$^1$   \hspace{1.0em}   Alexander Steinicke$^2$\\
}
\date{}
\begin{document}

\maketitle
\begin{abstract}
We investigate  conditions for solvability and Malliavin  differentiability  of   backward stochastic differential equations driven by a L\'evy process.
In particular, we are interested in generators which satisfy a  local Lipschitz condition in the $Z$ and $U$ variable. This includes settings of linear, quadratic and exponential growths in those variables. 

Extending an idea of Cheridito and Nam to the jump setting and applying comparison theorems for L\'evy-driven BSDEs, we show existence, uniqueness, boundedness and Malliavin differentiability of a solution. The pivotal assumption to obtain these results is a boundedness condition on the terminal value $\xi$ and its Malliavin derivative $D\xi$. 

Furthermore, we extend existence and uniqueness theorems to cases where the generator is not even locally Lipschitz in $U.$  BSDEs of the latter type find use in exponential utility maximization.
\end{abstract}

\vspace{1em}
{\noindent \textit{Keywords:}  BSDEs with jumps; locally Lipschitz generator; quadratic BSDEs; existence and uniqueness of solutions to BSDEs;  Malliavin differentiability  of  BSDEs

\textit{MSC2010:}  60H10}\bigskip

The paper contains  13593  words.

{\noindent
\footnotetext[1]{University of Jyvaskyla, Department of Mathematics and Statistics, P.O.Box 35, FI-40014 University of Jyvaskyla. \\ \hspace*{1.5em}
 {\tt christel.geiss{\rm@}jyu.fi}}
\footnotetext[2]{Department of Applied Mathematics and Information Technology, Montanuniversitaet Leoben, Peter Tunner-Stra\ss e \hspace*{1.5em}25/I, A-8700 Leoben, Austria. \\ \hspace*{1.5em}  {\tt alexander.steinicke{\rm@}unileoben.ac.at}}}


\section{Introduction}

In this paper, we consider existence, uniqueness and  Malliavin differentiability of one-dimensional backward stochastic differential equations (BSDEs) of the type
\equal \label{bsde00}
Y_t=\xi+\int_t^T\f(s,Y_s,Z_s,U_s)ds-\int_t^TZ_sdW_s-\int_{]t,T]\times\mathbb{R}\setminus\{0\}}U_s(x)\tilde{N}(ds,dx).
\tionl
Here  $W$  is  the  Brownian motion  and $\tilde{N}$ the  Poisson random measure associated to a L\'evy process $X$ with L\'evy measure $\nu$. 
In order to compute the Malliavin derivative of $\f,$ we require a special structure: We  assume that $\f$ can be represented by  functions $f$ and $g$, such that 
\begin{align}  \label{formf1}
\f(\om, s,y,z,{\bf u})=f\left((X_r(\om))_{r \le s}, s,y,z,  \int_{\R\setminus\{0\}}  g(s, {\bf u}(x)) \,\,(1\wedge|x|)\, \nu(dx) \right),
\end{align}
where $g(s,\cdot)$ is a locally Lipschitz continuous function on $\R$ with $g(s,0)=0.$ 
The function $f$  satisfies the following, in $(z,u)$ only   {\it local} Lipschitz condition:

There are nonnegative functions $a\in L_1([0,T]), b\in L_2([0,T])$  and  a nondecreasing, continuous function $\rho: [0, \infty)  \to [0, \infty)$ 
such that   $\forall t \in [0,T]$, $\forall \tx\in D{[0,t]}$  (the Skorohod space of c\`adl\`ag functions on  ${[0,T]}$), $(y,z,u), (\tilde{y},\tilde{z},\tilde{u})\in\R^3$:
\equa 
 |f(\tx, t,y,z,u)-f(\tx, t,\tilde{y},\tilde{z},\tilde{u})| 
 \le   a(t)|y-\tilde{y}| \!+\!  \rho(|z|\vee|\tilde{z}|\vee|u|\vee|\tilde{u}| ) \, b(t) ( |z-\tilde{z}| + |u-\tilde{u}|).
 \tion

Our first main result is Theorem \ref{bounded-sol-x} about  existence of solutions  $(Y,Z,U)$ to the BSDE \eqref{bsde00}:
If the terminal condition $\xi$ and  its Malliavin derivative  $D \xi$ are bounded, and the  Malliavin derivative  of the 
generator is bounded by a certain function depending on time and jump size, then there exists a  solution  $(Y, Z,U)$  which is Malliavin differentiable, and 
the paths of $Y, Z$ and $U$ are bounded by a constant a.s. Moreover, within a certain class of bounded processes, this solution is  unique.  \smallskip \\
Following  Cheridito and Nam \cite{CheriditoNam}, where a similar result is shown for BSDEs driven by a Brownian motion, the proof uses  a comparison theorem.
For BSDEs with jumps,  comparison theorems need an additional assumption on the generator  (see \ref{gamma} in Theorem \ref{comparison}).  
The comparison theorem provides not only a bound for $Y,$ but also  bounds for $Z$ and $U$: 
Indeed, since $Z$ and $U$ can be seen as versions of  Malliavin derivatives of $Y$ w.r.t.~the Brownian component  and  the jump component, respectively,
one can derive bounds by applying the comparison theorem to the Malliavin derivative of the BSDE. \smallskip \\
For BSDEs with  quadratic or sub-quadratic growth in $z$,  Briand  and Hu \cite{BriandHu}, Bahlali \cite{Bahlali}, S. Geiss and Ylinen \cite{GeissYlinen} 
(all in case of  BSDEs driven by a  Brownian motion)  and Antonelli and Mancini \cite{AntonelliMancini} (for BSDEs  with jumps and finite L\'evy measure),
investigate the requirements  on the terminal  condition such that existence and uniqueness of solutions holds.  It is well-known that -- in the case of quadratic growth in $z$, -- 
square integrability of $\xi$ is not  sufficient but the assumption that   $\xi$  is bounded  can be relaxed. However, for  super-quadratic drivers and 
a.s.~bounded terminal conditions $\xi$,  Delbaen et al.~\cite{delbaen} have shown that there are  cases of BSDEs without any solution as well as BSDEs with infinitely many solutions.\smallskip \\
For quadratic BSDEs with jumps  and infinite L\'evy measure there seem to be only results for bounded $\xi$  so far (see Morlais \cite{Morlais}  and Becherer et al.~\cite{BechererI}), and also for the  method we apply here, boundedness is needed. \bigskip \\
Our second main result is Theorem \ref{Hbsde}, which states   existence and uniqueness for a class of BSDEs 
where the generator is not even  locally  Lipschitz  w.r.t.  $\bu \in L_2(\nu).$ As an example, consider 
\equal \label{example-generator}
\f(s,y,z,{\bf u})=\bar {\f}(s,y,z)+\int_{\R\setminus\{0\}}\mathcal{H}_{\al}(\bu(x)) \nu(dx),
\tionl
where 
$$\mathcal{H}_{\al}(u) :=\frac{e^{\alpha u}-\alpha u-1}{\alpha},$$
for a real $\alpha>0$ and $\bar{\f}$ being quadratic in $z$. 
This particular form of $\f$ arises from exponential utility maximisation, see Morlais \cite{Morlais} or Becherer et al. \cite{BechererI}. 
Notice that compared to the generator given in \eqref{formf1}, the integral in \eqref{example-generator} does not contain the factor $1 \wedge|x|.$
In Section \ref{sec4} we address  the question to what extent the   structure of the generator  given in \eqref{example-generator} can be generalised. 
We were not able to show that the factor $1 \wedge|x|$  in  \eqref{formf1} can be  simply dropped under the given assumptions, but one can 
generalise  \eqref{example-generator} to the case where 

\equa
  \f(t,y, z, {\bf u}) :=  \varphi\left(\bar f\left(t,y, z,  G(t,{\bf u})  \right),  \int_{\R\setminus\{0\}}  \mathcal{H}({\bf u}(x))  \nu(dx) \right) 
\tion
with  $G(t,{\bf u}):=  \int_{\R\setminus\{0\}}  g(s, {\bf u}(x)) \,\,(1\wedge|x|)\, \nu(dx)$ and
$\bar f$ satisfying the assumption for \eqref{formf1}.  The function  $\varphi : \R^2 \to \R$  is a differentiable function such that
$ |\partial_v\varphi(v,w)|\leq 1$ and  $v \mapsto\partial_w\varphi(v,w)$ is a bounded function for any fixed $w \in \R.$   Moreover, we require 
$\varphi$ to satisfy  a condition  such  that the comparison theorem holds (see \ref{H3}).   The function $\mathcal{H}$ is 
a generalisation of  $\mathcal{H}_{\al}.$ It turns out that the bounds for 
$(Y,Z,U)$ do not depend on $\mathcal{H}.$

\bigskip
The paper is structured as follows:
In Section \ref{setting} we introduce  the notation and shortly recall the Skorohod  space,   Malliavin calculus for L\'evy processes
and results on existence and uniqueness of solutions to BSDEs as well as a comparison theorem for later use.
Sections \ref{sec3}   and   \ref{sec4} contain the main  results, Theorems \ref{bounded-sol-x} and \ref {Hbsde}, and their proofs. 
{\ch Section \ref{sec5} draws a connection between BSDEs with jumps and partial differential-integral equations (PDIEs).}
In  Appendix \ref{app}  we formulate a result of Malliavin differentiability for Lipschitz BSDEs which slightly  generalises  
\cite[Theorem \ref{diffthm}]{GeissSteinII}. It is applied in the proof of Theorem \ref{bounded-sol-x}.

\section{Setting and preliminaries}\label{setting}
\subsection{L\'evy process and independent random measure} \label{XandM}
Let $X=\left(X_t\right)_{t\in{[0,T]}}$ be a c\`adl\`ag L\'evy process with L\'evy measure $\nu$ on a complete probability space $(\Omega,\mathcal{F},\mathbb{P})$. 
We will denote the augmented natural filtration of $X$ by
$\left({\mathcal{F}_t}\right)_{t\in{[0,T]}}$ and assume that $\mathcal{F}=\mathcal{F}_T.$ \\
 \bigskip
 The L\'evy-It\^o decomposition of a L\'evy process $X$ can be written as
\equa
X_t = \gamma t + \sigma W_t   +  \int_{{]0,t]}\times \{ |x|\le1\}} x\tilde{N}(ds,dx) +  \int_{{]0,t]}\times \{ |x|> 1\}} x  N(ds,dx),
\tion
where $\gamma\in \R, \sigma\geq 0$, $W$ is a Brownian motion and $N$ ($\tilde N$) is the (compensated) Poisson random measure corresponding to $X$.
 The process $$\left(\int_{{]0,t]}\times \{ |x|\le1\}} x\tilde{N}(ds,dx) +  \int_{{]0,t]}\times \{ |x|> 1\}} x  N(ds,dx)\right)_{t\in[0,T]}$$
is the jump part of $X$ and will be denoted by $J$. Note that the $\mathbb{P}$-augmented filtrations $(\ftn^W_t)_{t\in{[0,T]}}$  and $(\ftn^J_t)_{t\in{[0,T]}}$ 
generated by the processes $W$  and $J,$  respectively, satisfy $$\ftn^W_t\vee\ftn^J_t=\ftn_t,$$ (see \cite[Lemma 3.1]{suv2}) thus spanning the original filtration generated by $X$ again.
Throughout the paper we will use the notation $X(\omega)=\left(X_t(\omega)\right)_{t\in{[0,T]}}$  for  sample  paths.  \\
\bigskip
 Let
\[\mu(dx):=\sigma^2\delta_0(dx)+\nu(dx)\]
and
\equa
\m(dt,dx) :=(\lambda\otimes\mu) (dt,dx),
\tion
where $\lambda$ denotes the Lebesgue measure. We define the independent random measure  (in the sense of \cite[p. 256]{ito}) $M$ by
\equal  \label{measureM}
   M(dt,dx):=\sigma dW_t\delta_0(dx) +\tilde N(dt,dx)
\tionl
 on sets $B \!\in \!\mathcal{B}([0,T]\times\R)$ with  $\m(B) < \infty$.
It holds $\E M(B)^2 = \m(B).$
In \cite{suv2}, Sol\'e et al.~consider  the independent random measure $\sigma dW_t\delta_0(dx)$ $+$ $x\tilde N(dt,dx).$ Here, in order to match the notation used for BSDEs, we work with the {\it equivalent} approach where the
Poisson random measure is not multiplied with $x$.\smallskip \\
We close this section with notation
for  c\`adl\`ag  processes on the path  space, and for BSDEs. \\ \bigskip

\subsection{Notation:  Skorohod space}
\begin{itemize}
\item With $D{[0,T]}$ we denote the Skorohod space of c\`adl\`ag functions on the interval ${[0,T]}$ equipped with the Skorohod topology. The $\sigma$-algebra
$\mathcal{B}(D{[0,T]})$  is the Borel $\sigma$-algebra  i.e.~it is generated by the open sets of $D{[0,T]}.$ It coincides with the $\sigma$-algebra generated by the family of coordinate
projections $\left(p_t\colon D{[0,T]}\to \R,\ \tx  \mapsto \tx(t),\  t\in [0,T] \right)$  (see \cite[Theorem 12.5]{Billing} for instance).
\item For a fixed $t\in{[0,T]}$ the
notation
\equa
\tx^t(s):=\tx(t\wedge s),\text{ for all } s\in{[0,T]}
\tion  
induces the natural identification $$D{[0,t]}=\left\{\tx\in D{[0,T]} : \tx^t=\tx \right\}.$$ By this identification
we define a filtration on this space by
\equal \label{filtrationG-t}
\mathcal{G}_t=\sigma\left(\mathcal{B}\left(D{[0,t]}\right)\cup \mathcal{N}_X{[0,T]}\right), \quad 0\leq t\leq T,
\tionl where $\mathcal{N}_X{[0,T]}$ denotes the
null sets of $\mathcal{B}\left(D{[0,T]}\right)$ with respect to the  image measure $\mathbb{P}_X$ on $\left(D{[0,T]},\mathcal{B}\left(D{[0,T]}\right)\right)$ of the L\'evy process $X\colon\Omega\to D{[0,T]},\omega \mapsto \mathrm{X}(\omega)$. For more details on $D{[0,T]}$, see \cite{Billing} and \cite[Section 4]{delzeith}.
\end{itemize}
\subsection{Notation for  BSDEs}
\begin{itemize}
\item Notice that $|\cdot|$ may denote the absolute value of a real number or  a norm in $\R^n.$
\item $L_p:=L_p(\Omega,\mathcal{F}, \mathbb{P}), \quad p \ge 0.$
\item  $L_p([0,T]):=L_p([0,T],\mathcal{B}([0,T]), \la), \quad p \ge 0.$
\item $L_2(\nu):= L_2(\R_0, \mathcal{B}(\R_0), \nu)$ with $\| \bu \|:= \| \bu \|_{L_2(\nu)}$ and $\R_0:= \R\!\setminus\!\{0\}$.
\item For  $1\le p \le \infty$ let  $\mathcal{S}_p$ denote the  space of all $(\mathcal{F}_t)$-progressively measurable and c\`adl\`ag processes  $Y\colon\Omega\times{[0,T]} \rightarrow \R$ such that
\equa
\left\|Y\right\|_{\mathcal{S}_p}:=\|\sup_{0\leq t\leq T} \left|Y_{t}\right| \|_{L_p} <\infty.
\tion
\item We define $L_2(W) $ as the space of all $(\mathcal{F}_t)$-progressively measurable processes $Z\colon \Omega\times{[0,T]}\rightarrow \R$  such that
\equa
\left\|Z\right\|_{L_2(W) }^2:=\E\int_0^T\left|Z_s\right|^2 ds<\infty,
\tion
and  $L_\infty(W) $ denotes the space of all $(\mathcal{F}_t)$-progressively measurable processes $Z\colon \Omega\times{[0,T]}\rightarrow \R$  such that
\equa
 \|Z \|_{L_\infty( \PP\otimes\la)} <\infty.
\tion
\item We define $L_2(\tilde N)$ as the space of all random fields $U\colon \Omega\times{[0,T]}\times{\R_0}\rightarrow \R$ 
which are measurable with respect to
$\mathcal{P}\otimes\mathcal{B}(\R_0)$ (where $\mathcal{P}$ denotes the predictable $\sigma$-algebra on $\Omega\times[0,T]$ generated
by the left-continuous $(\mathcal{F}_t)$-adapted processes) such that
\equa
\left\|U\right\|_{L_2(\tilde N) }^2:=\E\int_{{[0,T]}\times{\R_0}}\left|U_s(x)\right|^2 ds \nu(dx)<\infty,
\tion
\item  $L_{2 \times\infty}(\tilde N) $ denotes  the space of all random fields $U\colon \Omega\times{[0,T]}\times{\R_0}\rightarrow \R$
which are measurable with respect to
$\mathcal{P}\otimes\mathcal{B}(\R_0)$ 
  such that
\equa
 \left \| \int_{\R_0}\left|U_{\cdot}(x)\right|^2  \nu(dx)  \right \|_{L_\infty(\PP\otimes\la)} <\infty.
\tion
\item $L_{2,b}(\tilde N):= \{ U \in L_2(\tilde N): \exists A \in L_2(\nu)\cap L_\infty(\nu) \text{ such that } |U_s(x,\om)| \le A(x) \}.$

\item We recall the notion of the predictable projection of a stochastic process depending on parameters.
According to \cite[Proposition 3]{StrickerYor}  (see also  \cite[Proposition 3]{Meyer} or \cite[Lemma 2.2]{Ankirch}) for any {\ch non-negative or bounded}   $z\in L_2(\PP\otimes\m):=
L_2(\Omega\times {[0,T]}\times\R,\mathcal{F}_T\otimes\mathcal{B} ( [0,T]\times\R),
\mathbb{P} \otimes\m )$  there exists a process
\[^pz \in \mathrm{L}_2\left(\Omega\times {[0,T]}\times\R,\mathcal{P}\otimes\mathcal{B}(\R), \mathbb{P}\otimes\m\right)\]
such that for any fixed  $x\in\R$ the function
$ (^pz)_{\cdot,x}$ is a version of the predictable projection (in the classical sense , see e.g. \cite[Definition 2.1]{Ankirch}) of $ z_{ \cdot,x}.$ In the following we will always use this result to get predictable projections which are measurable w.r.t. a parameter. Again, we call $^pz$ the predictable projection of $z$.
\end{itemize}


\subsection{Malliavin derivatives}
We  sketch the definition of the Malliavin derivative using chaos expansions. 
For details we refer to \cite{suv}.
According to \cite{ito} there exists for any 
$\xi \in L_2(\Om,\ftn,\mathbb{P})$ a unique chaos expansion 
\equa
\xi=\sum_{n=0}^\infty I_n(\tilde f_n),
\tion
where  $f_n \in L^n_2:=L_2(([0,T]\times\R)^n,\m^{\otimes n}),$ and  $\tilde f_n((t_1,x_1),...,(t_n,x_n))$ is the symmetrisation of 
$f_n((t_1,x_1),...,(t_n,x_n))$ w.r.t. the $n$ pairs of variables. The multiple integrals $I_n$ are build with the random measure  
$M$ from \eqref{measureM}. Let $\DD$ be the space  of all  random variables $\xi \in L_2$  such that
\equa
   \|\xi\|^2_{\DD}:= \sum_{n=0}^\infty (n+1)!\left\|\tilde f_n\right\|_{L^n_2}^2<\infty.
\tion
For $\xi \in \DD,$  the Malliavin derivative is defined by
\begin{equation*}
D_{t,x}\xi:=\sum_{n=1}^\infty nI_{n-1}\left(\tilde f_n\left((t,x),\ \cdot\ \right)\right),
\end{equation*}
for $\mathbb{P}\otimes\m$-a.a. $(\omega,t,x)\in\Omega\times{[0,T]}\times\R$. It holds $D \xi  \in  L_2(\mathbb{P}\otimes\m )$.  
We will also use
\equa
\mathbb{D}_{1,2}^0:=  \bigg \{\xi=\sum_{n=0}^\infty I_n(\tilde f_n) \in L_2\colon  f_n \in  L_2^n, n\in \N,  
\sum_{n=1}^\infty  (n+1)! \int_0^T  \|\tilde f_n((t,0),\cdot) \|_{L^{n-1}_2}^2 dt < \infty  \bigg \}
\tion
and
\equa
\mathbb{D}_{1,2}^{\R_0}&:= & \bigg \{\xi=\sum_{n=0}^\infty I_n(\tilde f_n) \in L_2\colon f_n \in L_2^n, n\in \N, \\
   &&  \quad \quad \quad  \sum_{n=1}^\infty  (n+1)! \int_{[0,T]\times \R_0}  \|\tilde f_n((t,x),\cdot) 
   \|_{\mathrm{L}^{n-1}_2}^2 \m( dt,dx) < \infty  \bigg \}.
\tion

The  Malliavin derivative $D_{t,x}$ for $x\neq 0$ can be easily characterised without chaos expansions: Here we use that for
any $\xi \in  L_2(\Om,\ftn,\mathbb{P})$ there exists a measurable function $g_\xi \colon D[0,T] \to \R$ such that
 \[ \xi(\omega) =g_\xi\left(\left(X_t(\omega)\right)_{0\leq t\leq T}\right) =g_\xi(X(\omega))\] for a.a. $\omega \in \Omega$ (see, 
 for instance, \cite[ Section II.11]{bauer}).  

\begin{lemma}[\cite{Steinicke},  {\cite[Lemma 3.2]{GeissSteinII}} ] \label{functionallem}
If  $g_\xi(X) \in    L_2$
then 
\equa
 g_\xi(X) \in  \mathbb{D}_{1,2} ^{\R_0}  \iff  g_\xi(X+x\one_{[t,T]})-g_\xi(X) \in L_2(\PP \otimes\m),
\tion
and it holds then
   for    $x\neq0$  $\PP \otimes\m$-a.e.
\begin{equation}\label{xieq}
D_{t,x} \xi=  g_\xi(X+x\one_{[t,T]})-g_\xi(X) .
\end{equation}
\end{lemma}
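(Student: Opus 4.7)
The plan is to exploit the chaos expansion of $\xi = g_\xi(X) = \sum_{n=0}^\infty I_n(\tilde f_n)$ together with an ``add-one-jump'' formula for the multiple integrals $I_n$, in the spirit of Picard. The core identity to establish is that for $(t,x) \in [0,T]\times\R_0$, $\PP$-almost surely,
$$I_n(\tilde f_n)(X + x\one_{[t,T]}) - I_n(\tilde f_n)(X) = n\, I_{n-1}(\tilde f_n((t,x),\cdot)).$$
The heuristic reason this should hold is that the perturbation $X \mapsto X + x\one_{[t,T]}$ with $x\neq 0$ leaves the Brownian component of $X$ unchanged and only inserts a single jump of size $x$ at time $t$ into the pure-jump part; in terms of the random measure $M = \sigma\, dW\, \delta_0 + \tilde N$ from \eqref{measureM} this corresponds to replacing $\tilde N$ by $\tilde N + \delta_{(t,x)}$, a perturbation living entirely on the Poisson part and on the slice $\{x\neq 0\}$ where the Malliavin derivative $D_{t,x}$ is being probed.

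To prove the shift formula, I would first verify it on a dense class: symmetrised indicators $\tilde f_n = \widetilde{\one_{A_1 \times \cdots \times A_n}}$ with pairwise disjoint rectangles $A_i \subset [0,T] \times \R_0$ of finite $\m$-measure. On such integrands, $I_n(\tilde f_n)$ is a polynomial in the $\tilde N(A_i)$, and adding a point $(t,x)$ to $\tilde N$ increments exactly the one $\tilde N(A_j)$ for which $(t,x)\in A_j$. A direct combinatorial expansion of the difference then yields $n\,I_{n-1}(\tilde f_n((t,x),\cdot))$ after invoking symmetry of $\tilde f_n$. The extension to general symmetric $\tilde f_n \in L_2^n$ is an $L_2(\PP \otimes \m)$-density argument leaning on the isometry $\E |I_n(\tilde f_n)|^2 = n! \|\tilde f_n\|_{L_2^n}^2$ and its analogue applied to the shifted integrals, so that both sides of the identity are approximated simultaneously in $L_2(\PP\otimes\m)$.

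Given the per-chaos shift formula, summing over $n$ and appealing to Parseval yields
$$\E \int_{[0,T]\times\R_0} \bigl|g_\xi(X + x\one_{[t,T]}) - g_\xi(X)\bigr|^2 \m(dt,dx) = \sum_{n=1}^\infty n\cdot n!\int_{[0,T]\times\R_0} \|\tilde f_n((t,x),\cdot)\|_{L_2^{n-1}}^2\, \m(dt,dx).$$
The right-hand side is finite precisely when $g_\xi(X)\in \mathbb{D}_{1,2}^{\R_0}$, so both directions of the equivalence drop out at once; and when it is finite, the chaos expansion of the difference coincides termwise with that of $D_{t,x}\xi$ given in the definition, producing \eqref{xieq} $\PP\otimes\m$-a.e.\ on $\{x\neq 0\}$.

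The main obstacle I expect is the combinatorial bookkeeping in the shift formula for general (non product-type) symmetric kernels: one must track the symmetrisation carefully in the remaining $n-1$ arguments and ensure that the perturbation acts only on the Poisson coordinate, leaving the contribution from integration against the Brownian $\delta_0$-part of $M$ identical in every variable slot. A secondary technical point is producing a $\PP \otimes \m$-measurable version of $(t,x,\om)\mapsto g_\xi(X+x\one_{[t,T]})-g_\xi(X)$, where the predictable-projection machinery recalled in the preliminaries is the natural tool.
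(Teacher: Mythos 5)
You should first note that the paper itself does not prove this lemma: it is imported from \cite{Steinicke} and \cite[Lemma 3.2]{GeissSteinII}, where it is obtained by transferring the canonical-space identity of \cite{Alos} (difference operator for $x\neq 0$) to a general probability space via the functional representation $\xi=g_\xi(X)$ and the shift Lemma \ref{path-shift}. Your chaos-expansion route -- proving the add-one-jump formula $I_n(\tilde f_n)(X+x\one_{[t,T]})-I_n(\tilde f_n)(X)=n\,I_{n-1}(\tilde f_n((t,x),\cdot))$ per chaos and then summing -- is essentially the argument underlying that canonical-space result, so the strategy is reasonable; but as written it has genuine gaps.

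First, $g_\xi$ is determined only up to $\PP_X$-null sets, so the object $g_\xi(X+x\one_{[t,T]})$ is not even well defined until one knows that $\PP_X$-null sets pull back to $\PP\otimes\m$-null sets under $(\om,t,x)\mapsto X(\om)+x\one_{[t,T]}$; this is precisely Lemma \ref{path-shift} (a Mecke/absolute-continuity type fact), and it is also the tool needed to transfer a.s.\ identities to shifted paths and to pass to limits, because $F\mapsto F(X+x\one_{[t,T]})$ is \emph{not} a bounded map from $L_2(\PP)$ to $L_2(\PP\otimes\m)$. The ``predictable projection machinery'' you point to is not relevant here. For the same reason your density step is circular: the $L_2(\PP\otimes\m)$-isometry ``applied to the shifted integrals'' is exactly the identity being proved, so the extension from elementary kernels must instead go through a.e.\ convergence along subsequences via Lemma \ref{path-shift}. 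Second, the converse implication does not ``drop out at once'' from Parseval: from the per-chaos formula you only obtain that the partial sums of an orthogonal series converge $\PP\otimes\m$-a.e.\ (along a subsequence) to the difference $g_\xi(X+x\one_{[t,T]})-g_\xi(X)$, and a.e.\ convergence to an $L_2(\PP\otimes\m)$ limit does not by itself give the summability $\sum_n n\cdot n!\int_{[0,T]\times\R_0}\|\tilde f_n((t,x),\cdot)\|_{L_2^{n-1}}^2\,\m(dt,dx)<\infty$ that defines membership in $\mathbb{D}_{1,2}^{\R_0}$. One needs an additional argument, e.g.\ identifying for a.e.\ $(t,x)$ the projection of the difference onto the $(n-1)$-st chaos with $n\,I_{n-1}(\tilde f_n((t,x),\cdot))$ (via Mecke's formula, or the closability arguments in the cited references), after which Parseval indeed yields both directions and \eqref{xieq}.
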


For the canonical L\'evy space, this result can be found in \cite{Alos}.
Notice that  \cite{Alos} uses the random measure $\sigma dW_t\delta_0(dx) +x\tilde N(dt,dx)$, so that the according Malliavin derivative 
for $x\neq 0$ and $M$ from \eqref{measureM} is a  {\it difference quotient}   while we have just a  difference. However,  both approaches are 
equivalent.  \bigskip

Assume for example,  that the generator  $f\left((X_r(\om))_{r \le s}, s,y,z,u \right) $ is a.s. Lipschitz in $(y,z,u).$ 
Then also the Malliavin derivative  $ D_{t,x}f\left((X_r(\om))_{r \le s}, s,y,z,u \right)$  for $x\neq 0$ has this property for $\mathbb{P} \otimes \m$-a.a. 
$(\omega,t,x) \in \Omega\times[0,T]\times \R_0.$  This is an immediate consequence  of the next Lemma.
 
 \begin{lemma}[{\cite[Lemma 3.3]{GeissSteinII}}] \label{path-shift}
Let $\Lambda\in \mathcal{G}_T$ be a set with  $\mathbb{P}\left(\left\{X\in \Lambda\right\}\right)=0$. Then
$$\mathbb{P} \otimes\m\left(\left\{(\omega,t,x)\in \Omega\times{[0,T]}\times\R_0:X(\omega)+x\one_{[t,T]}\in \Lambda\right\}\right)=0.$$
\end{lemma}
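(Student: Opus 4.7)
The plan is to apply the Mecke (Slivnyak--Mecke) identity for the Poisson random measure $N$ associated with $X$. The key observation is that for $x\neq 0$ the shifted path $X(\om)+x\one_{[t,T]}$ coincides with the L\'evy path built from the configuration $N(\om)+\delta_{(t,x)}$: inserting the atom $(t,x)$ changes the jump part by exactly $x\one_{[t,T]}$, while $W$ and the drift $\gamma\cdot$ are untouched.

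First I reduce to a Borel $\Lambda$. Since $\mathcal{G}_T$ is the $\PP_X$-completion of $\mathcal{B}(D[0,T])$, any $\Lambda\in\mathcal{G}_T$ with $\PP(X\in\Lambda)=0$ is contained in a Borel set $\Lambda_B\subset D[0,T]$ with $\PP(X\in\Lambda_B)=0$, and the result for $\Lambda_B$ implies it for $\Lambda$. Next, because $\nu$ is a L\'evy measure, write $\R_0=\bigcup_{n\ge 1}A_n$ with $\nu(A_n)<\infty$ (take e.g.\ $A_n:=\{1/n<|x|\le n\}$); by $\sigma$-additivity it suffices to show, for every $n$,
$$\int_\Om\!\int_0^T\!\!\int_{A_n}\one_{\Lambda_B}\!\bigl(X(\om)+x\one_{[t,T]}\bigr)\,\nu(dx)\,dt\,d\PP(\om)=0.$$

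Because $W$ and $N$ are independent, the conditional Mecke formula applies: for the non-negative, jointly measurable functional $h(\om,(t,x)):=\one_{\Lambda_B}(X(\om))$ and the finite intensity measure $\la\otimes\nu$ on $[0,T]\times A_n$,
$$\int_0^T\!\!\int_{A_n}\!\E\,\one_{\Lambda_B}\!\bigl(X+x\one_{[t,T]}\bigr)\nu(dx)\,dt=\E\!\int_0^T\!\!\int_{A_n}\!\one_{\Lambda_B}(X)\,N(dt,dx)=\E\bigl[\one_{\Lambda_B}(X)\cdot N([0,T]\times A_n)\bigr].$$
By hypothesis $\one_{\Lambda_B}(X)=0$ a.s.\ and $N([0,T]\times A_n)$ is an a.s.\ finite Poisson random variable, so the product vanishes a.s.\ and its expectation is zero; summing over $n$ yields the claim.

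The main obstacle is essentially bookkeeping: I need joint measurability of $(\om,t,x)\mapsto X(\om)+x\one_{[t,T]}$ as a $D[0,T]$-valued map (clear from the continuity of the shift in the Skorohod topology), and I must justify the conditional form of Mecke's identity in the presence of the independent Brownian component. The latter is handled by conditioning on $W$ first---applying classical Mecke to the functional $\one_{\Lambda_B-\gamma\cdot-\sigma W(\om)}(J(N))$ on the Poisson part, then integrating out $W$---or equivalently by invoking the version of Mecke's formula valid on a product of an independent reference space and a Poisson space.
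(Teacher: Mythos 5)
Your argument is correct in substance, and it runs on the same underlying mechanism as the proof behind this lemma: the paper itself gives no proof here but cites \cite{GeissSteinII}, where the statement is established via the canonical L\'evy space of Sol\'e--Utzet--Vives \cite{suv2}, in which the ``add a jump at $(t,x)$'' transformation is part of the construction and the decisive integral identity is exactly the Palm/Mecke-type computation you invoke (for a finite-intensity piece, $\int\E[\one_{\Lambda_B}(N+\delta_{(t,x)})]\,\nu(dx)dt=\E[\one_{\Lambda_B}(N)\,N([0,T]\times A_n)]$). What your route buys is a self-contained argument on a general probability space by quoting Mecke's identity directly, with the independent Brownian part handled by conditioning; what the canonical-space route buys is that the identification you need, namely that the path functional evaluated at the configuration $N+\delta_{(t,x)}$ equals $X+x\one_{[t,T]}$, holds by construction rather than needing an extra argument.

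That identification is the one place where your write-up is thinner than it should be: $X$ is only an a.s.-defined functional of $(W,N)$, since the small-jump part of the L\'evy--It\^o decomposition is an a.s.\ limit of compensated partial sums. You must therefore fix a concrete version $\Phi(w,\eta)$ of the L\'evy--It\^o map (e.g.\ the pathwise limit of the compensated sums over $\{|x|>1/n\}$ where it exists, with an arbitrary convention elsewhere); for such a version, inserting one atom changes every partial sum with $1/n<|x|$, hence the limit, by exactly $x\one_{[t,T]}$, on the full-measure event where the limit exists, and only then is Mecke's formula applied to $h(\eta,(t,x))=\one_{\Lambda_B}(\Phi(W,\eta))$ legitimate. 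Two smaller points: joint measurability of $(\om,t,x)\mapsto X(\om)+x\one_{[t,T]}$ should be justified through the coordinate projections generating $\mathcal{B}(D[0,T])$ (addition is not jointly continuous in the Skorohod topology, so ``continuity of the shift'' is not a valid reason, though measurability is immediate this way); and for a general $\Lambda\in\mathcal{G}_T$ your reduction only places the set inside a measurable $\PP\otimes\m$-null set, so the conclusion is to be read for the completed product measure --- the same convention as in the cited lemma. With these repairs, the exhaustion $\R_0=\bigcup_n A_n$ with $\nu(A_n)<\infty$, the conditioning on the independent $W$, and the final step $\E[\one_{\Lambda_B}(X)\,N([0,T]\times A_n)]=0$ are all fine.
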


\subsection{Existence and comparison results for monotonic generators}

We consider  the BSDE
\equal \label{bsde0}
Y_t=\xi+\int_t^T  \f (s,Y_s, Z_s, U_s)ds
 -     \int_t^T Z_s   dW_s 
-\int_{{]t,T]}\times{\R_0}}U_s(x) \tilde N(ds,dx), \,\, 0\le t\le T,
\tionl

where 
$$ \f:  \Om \times [0,T] \times \R \times \R \times L_2(\nu) \to \R.$$

If a triple $(Y,Z,U) \in \mathcal{S}_2 \times L_2(W) \times L_2(\tilde N) $ satisfies \eqref{bsde0}  it is   called a solution to the BSDE  \eqref{bsde0}.  

We will recall first  the  existence and  uniqueness  result  \cite[Theorem 3.1]{SteinickeII}.

\begin{thm}\label{existence}
There exists a unique solution to the BSDE $(\xi,\f)$ with $\xi\in L_2$ and generator $\f:\Omega\times{[0,T]}\times\mathbb{R}\times\mathbb{R}\times L_2(\nu)\to\mathbb{R}$
satisfying the properties
\begin{enumerate}[label={(H\,\arabic*)}]
\item\label{1} For all $(y,z,\bu): (\omega,s)\mapsto \f(\omega,s,y,z,\bu)$ is progressively measurable.

\item\label{2} There are nonnegative, progressively measurable processes $K_1,  K_2$ and $F$ with 
\equa 
 \left \|\int_0^T\left(K_1(\cdot ,s)+K_2(\cdot,s)^2\right)ds \right \|_{\infty}<\infty \quad \text{ and } \quad \E \left[\int_0^T |F( t)| dt\right]^2< \infty
\tion
 such that for all $(y,z,\bu)$,
\begin{align*}
&|\f(s,y,z,\bu)|\leq F(s)+K_1(s)|y|+K_2(s)(|z|+\|\bu\|), \quad \mathbb{P}\otimes\lambda\text{-a.e.}
 \end{align*}

\item\label{3} For $\lambda$-almost all $s$, the mapping $(y,z,\bu)\mapsto \f(s,y,z,\bu)$ is $\mathbb{P}$-a.s. continuous. Moreover, there is a nonnegative function $\alpha\in L^1([0,T])$, $c>0$ and a progressively measurable process $\beta$ with $\int_0^T \beta(\omega,s)^2 ds<c$, $\mathbb{P}$-a.s. such that for all $(y,z,\bu), (y',z',\bu')$,
\begin{align*}
&(y-y')(\f(s,y,z,\bu)-\f(s,y',z',\bu'))\\
&\leq \alpha(s)\theta(|y-y'|^2)+\beta(s)|y-y'|(|z-z'|+\|\bu-\bu'\|), \quad \mathbb{P}\otimes\lambda\text{-a.e.}
\end{align*}
where $\theta$ is a nondecreasing, continuous and concave function from ${[0,\infty[}$ to itself, $\theta(0)=0$, {\ch  $\limsup_{x\searrow 0}\frac{\theta(x^2)}{x}=0$} and $\int_{0^+}\frac{1}{\theta(x)}dx=\infty$.
\end{enumerate}
\end{thm}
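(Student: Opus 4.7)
The plan is to construct the solution by approximating $\f$ by a sequence of globally Lipschitz generators, obtaining a priori estimates, and then passing to the limit via a stochastic Bihari argument that exploits the concave modulus $\theta$.

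First, I would regularise. For each $n\in\N$ let $\f_n(s,y,z,\bu)$ be obtained from $\f$ either by truncating the input $y$ at level $n$ (so that $\f_n$ becomes Lipschitz in $y$ since the monotone behaviour was the only obstruction) or by an inf-convolution such as
$$\f_n(s,y,z,\bu):=\inf_{(y',z',\bu')}\bigl\{\f(s,y',z',\bu')+n(|y-y'|+|z-z'|+\|\bu-\bu'\|)\bigr\}.$$
Each $\f_n$ is globally Lipschitz in $(y,z,\bu)$, still satisfies \ref{2} and \ref{3} with constants uniform in $n$, and $\f_n\to\f$ pointwise. By the standard Lipschitz-BSDE theory for L\'evy drivers (Tang--Li, Barles--Buckdahn--Pardoux), for each $n$ there is a unique solution $(Y^n,Z^n,U^n)\in\mathcal{S}_2\times L_2(W)\times L_2(\tilde N)$ to $(\xi,\f_n)$.

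Next, I would derive a priori estimates uniform in $n$. Applying It\^o's formula to $e^{\la t}|Y^n_t|^2$ for sufficiently large $\la$ depending on $\|K_1\|_\infty$ and $\|K_2^2\|_\infty$ from \ref{2}, together with BDG and Young's inequality on the resulting cross terms, yields
$$\sup_n\E\Bigl[\sup_{t\in[0,T]}|Y^n_t|^2+\int_0^T\!|Z^n_s|^2\,ds+\int_{[0,T]\times\R_0}\!|U^n_s(x)|^2\,ds\,\nu(dx)\Bigr]<\infty.$$
The heart of the proof is then the Cauchy step. Applying It\^o to $|Y^n_t-Y^m_t|^2$ and invoking \ref{3}, the $\alpha$--term contributes $\alpha(s)\theta(|Y^n_s-Y^m_s|^2)$, while the $\beta$--term produces cross products $\beta(s)|Y^n_s-Y^m_s|(|Z^n_s-Z^m_s|+\|U^n_s-U^m_s\|)$ that are absorbed into the quadratic terms by Young's inequality, leaving a $\beta^2|Y^n-Y^m|^2$ piece handled by stochastic Gronwall. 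The jump-induced correction $\sum_{s\leq t}\bigl(|Y^n_{s-}-Y^m_{s-}+U^n_s(\Delta X_s)-U^m_s(\Delta X_s)|^2-|Y^n_{s-}-Y^m_{s-}|^2\bigr)$ requires extra care: this is exactly where the condition $\limsup_{x\searrow 0}\theta(x^2)/x=0$ enters, allowing the small-jump contribution to be dominated and compensated. A stochastic Bihari inequality, valid because $\int_{0^+}\frac{dx}{\theta(x)}=\infty$, then forces $\E\sup_t|Y^n_t-Y^m_t|^2\to 0$, and back-substitution in the It\^o identity gives convergence of $(Z^n,U^n)$ in $L_2(W)\times L_2(\tilde N)$.

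Finally, I would pass to the limit in the BSDE for $(Y^n,Z^n,U^n)$. The stochastic integrals converge by It\^o isometry, and the drift $\int_t^T\f_n(s,Y^n_s,Z^n_s,U^n_s)\,ds$ converges to $\int_t^T\f(s,Y_s,Z_s,U_s)\,ds$ using the growth bound \ref{2} (dominated convergence) and the continuity in \ref{3}. The limit $(Y,Z,U)$ solves the BSDE. Uniqueness follows by applying the same It\^o--Bihari argument to the difference of any two solutions: the concave modulus together with $\int_{0^+}1/\theta=\infty$ forces the difference to be zero. The main obstacle throughout is the Cauchy step; everything else is routine, and it is precisely the interplay between the concave monotonicity modulus $\theta$ and the jump-generated second-order terms that makes the technical condition $\limsup_{x\searrow 0}\theta(x^2)/x=0$ indispensable.
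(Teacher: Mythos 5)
Theorem \ref{existence} is not proved in this paper at all: it is quoted verbatim from \cite{SteinickeII} (Theorem 3.1 there), so your outline has to be measured against that reference rather than against anything in the present text. Your overall architecture (Lipschitz approximation, uniform a priori bounds, a Cauchy step via concavity of $\theta$, Jensen and a Bihari argument using $\int_{0^+}dx/\theta(x)=\infty$, then limit passage and the same Osgood argument for uniqueness) is the right family of ideas, but the step that carries all the weight --- producing approximations $\f_n$ that are globally Lipschitz \emph{and} satisfy \ref{2}--\ref{3} uniformly in $n$ --- does not work as written. Truncating the $y$-argument does not make a continuous generator Lipschitz in $y$: condition \ref{3} is an Osgood/one-sided condition, not a Lipschitz one, and a generator like $-\mathrm{sgn}(y)\sqrt{|y|}$ passes through truncation unchanged. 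The inf-convolution alternative is also problematic: its finiteness is not guaranteed, because the linear-growth coefficients $K_1,K_2$ in \ref{2} are random processes that are not uniformly bounded (only $\|\int_0^T(K_1+K_2^2)ds\|_\infty$ is finite), so no fixed penalty level $n$ dominates them on all of $\Omega\times[0,T]$; and even where it is finite, the assertion that the regularised generator satisfies \ref{3} with the \emph{same} $\alpha,\theta,\beta$ is precisely the nontrivial point and is stated without proof. This is where the real work of the theorem lies, and it is missing.

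Two further points would block the argument as formulated. First, you attribute the hypothesis $\limsup_{x\searrow 0}\theta(x^2)/x=0$ to the jump corrections in the It\^o expansion of $|Y^n_t-Y^m_t|^2$; but there the jump terms are compensated \emph{exactly} by $\int_t^T\|U^n_s-U^m_s\|^2ds$ and no condition on $\theta$ is needed. That hypothesis is used in arguments that work with $|Y-Y'|$ itself (Tanaka/Meyer--It\^o type estimates, as in comparison results), where the monotonicity bound must effectively be divided by $|y-y'|$ and one needs $\theta(x^2)/x\to 0$ near the origin --- so your explanation of its role is a guess that does not match its actual function. Second, the a priori and Gronwall steps invoke constants ``depending on $\|K_1\|_\infty$ and $\|K_2^2\|_\infty$'' and absorb the $\beta$-terms into a ``stochastic Gronwall''; under \ref{2}--\ref{3} those sup-norms are not finite, and $\beta$ is only a.s.\ square-integrable in time with the deterministic bound $c$, so one cannot pull expectations past $\beta(s)^2|\Delta Y_s|^2$ without an extra device (random exponential weights $e^{\int_0^t(\cdot)ds}$, which stay bounded thanks to the a.s.\ integral bounds, or a localization/measure-change argument). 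These issues are repairable in principle, but as written both the regularisation and the Cauchy step contain genuine gaps, and the stated role of the key extra hypothesis on $\theta$ is incorrect.
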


We cite also  the comparison theorem  \cite[Theorem 3.5]{SteinickeII}.

\begin{thm}\label{comparison}
Let $\f,\f'$ be two generators satisfying the conditions \ref{1}-\ref{3} of Theorem \ref{existence} ($\f$ and $\f'$ may have different coefficients). We assume $\xi\leq \xi'$, $
\mathbb{P}$-a.s. and for all $(y,z,\bu)$, $$\f(s,y,z,\bu)\leq \f'(s,y,z,\bu),$$ for $\mathbb{P}\otimes\lambda$-a.a. $(\omega,s)\in\Omega\times{[0,T]}$.  Moreover, assume 
that $\f$ or \,$\f'$
satisfy the condition (here formulated for $\f$)
\begin{enumerate}[label={(A\,$\gamma$)}]
\item\label{gamma} \quad\quad$ \f(s,y,z,\bu)- \f(s,y,z,\bu')\leq \int_{\R_0}(\bu'(x)-\bu(x))\nu(dx), \phantom{ttt}\mathbb{P}\otimes\lambda$-a.e. \\
\hspace*{1.9em} for all $\bu, \bu'  \in L_2(\nu)$ with   $\bu\leq \bu'.$
\end{enumerate} 
Let $(Y,Z,U)$ and $(Y',Z',U')$ be the solutions to $(\xi,\f)$ and $(\xi',\f')$, respectively.
Then, $$Y_t\leq Y'_t,  \quad \mathbb{P}\text{-a.s.}$$
\end{thm}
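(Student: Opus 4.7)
My plan is the classical linearisation-plus-Girsanov strategy adapted to BSDEs with jumps. Set $\delta Y := Y-Y'$, $\delta Z := Z-Z'$, $\delta U := U-U'$, and $\delta\xi := \xi - \xi'\le 0$. The triple $(\delta Y,\delta Z,\delta U)$ satisfies a BSDE with terminal value $\delta\xi$ and driver $\delta\f_s = \f(s,Y,Z,U)-\f'(s,Y',Z',U')$, and the goal is to show $\E[(\delta Y_t^+)^2]=0$ for every $t\in[0,T]$.

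The first step is a preparatory observation: condition \ref{3} secretly contains a genuine Lipschitz estimate in $(z,\bu)$. Dividing the monotonicity inequality by $|y-y'|$, letting $y\to y'$, and using both $\theta(x^2)/x\to 0$ and the continuity of $\f$ in $y$, one obtains
\[
|\f(s,y,z,\bu)-\f(s,y,z',\bu')|\leq \beta(s)(|z-z'|+\|\bu-\bu'\|).
\]
I then telescope $\delta\f_s$ into four differences, changing one of $Y,Z,U$ at a time and ending with the remainder $(\f-\f')(s,Y',Z',U')$, which is $\le 0$ by hypothesis. The $L_2(\nu)$-Lipschitz property of $\bu\mapsto \f(s,Y'_s,Z'_s,\bu)$ produces, via a measurable selection along the segment $\bu_\theta := \theta U_s+(1-\theta)U'_s$, a predictable field $\gamma_s(x)$ with $\|\gamma_s\|_{L_2(\nu)}\leq \beta(s)$ such that
\[
\f(s,Y'_s,Z'_s,U_s)-\f(s,Y'_s,Z'_s,U'_s)=\int_{\R_0}\gamma_s(x)\,\delta U_s(x)\,\nu(dx),
\]
and condition \ref{gamma} forces $\gamma_s(x)\ge -1$ $\nu$-a.e.

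Next I would perform a Girsanov change of measure. Since $\gamma\ge -1$, the Dol\'eans--Dade exponential $L:=\mathcal{E}(\int\gamma\,\tilde N)$ is a non-negative local martingale; the $L_2(\nu)$-bound on $\gamma$ combined with the integrability in \ref{2} promotes $L$ to a true martingale, so $\mathbb{Q} := L_T\cdot \mathbb{P}$ is equivalent to $\mathbb{P}$. Under $\mathbb{Q}$, the random measure $\tilde N^{\mathbb{Q}}(ds,dx) := \tilde N(ds,dx)-\gamma_s(x)\nu(dx)\,ds$ is compensated, $W$ remains Brownian, and the $\bu$-dependence of the driver is absorbed into the jump integrator. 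I then apply the Meyer--It\^o formula under $\mathbb{Q}$ to $(\delta Y_t^+)^2$, approximating $(x^+)^2$ by smooth convex functions. Convexity makes the jump integrand non-negative, so it is discarded; the remaining drift contribution $2\delta Y_{s-}^+[\f(s,Y,Z,U)-\f(s,Y',Z',U)]$ is bounded, via the monotonicity part of \ref{3}, by $2\alpha(s)\theta((\delta Y_{s-}^+)^2)+2\beta(s)\delta Y_{s-}^+|\delta Z_s|$, and Young's inequality absorbs the $|\delta Z|$ contribution into the Brownian quadratic-variation term $\mathbf{1}_{\{\delta Y_{s-}>0\}}|\delta Z_s|^2$ up to a $\beta(s)^2(\delta Y_{s-}^+)^2$ summand. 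Taking $\mathbb{Q}$-expectation (after localisation) and applying Jensen's inequality to the concave $\theta$ yields
\[
\E^{\mathbb{Q}}[(\delta Y_t^+)^2]\le \int_t^T C(s)\,\E^{\mathbb{Q}}[(\delta Y_s^+)^2]\,ds+\int_t^T\alpha(s)\,\theta\bigl(\E^{\mathbb{Q}}[(\delta Y_s^+)^2]\bigr)\,ds,
\]
and Bihari's inequality (applicable since $\int_{0^+}\theta^{-1}=\infty$) forces $\E^{\mathbb{Q}}[(\delta Y_t^+)^2]=0$, hence $Y_t\le Y'_t$ $\mathbb{Q}$- and therefore $\mathbb{P}$-a.s.

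The main obstacle is promoting the one-sided bound $\gamma\ge -1$ of \ref{gamma} to a true martingale property of $L$: positivity is automatic, but the equivalence $\mathbb{Q}\sim \mathbb{P}$ requires $\E L_T=1$, which must be extracted from the $L_2(\nu)$-bound on $\gamma$ together with the integrability of $\beta$ via a jump-type Novikov argument. A secondary technical point is the measurable construction of $\gamma$ along the segment $\bu_\theta$, which relies either on G\^ateaux differentiability of $\f$ in $\bu$ (which itself follows from the Lipschitz bound above combined with continuity) or on an Aumann--von Neumann measurable selection.
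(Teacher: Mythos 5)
Your preliminary observation is correct: since $\limsup_{x\searrow 0}\theta(x^2)/x=0$ and $\f$ is continuous in $y$, condition \ref{3} indeed yields $|\f(s,y,z,\bu)-\f(s,y,z',\bu')|\le\beta(s)(|z-z'|+\|\bu-\bu'\|)$. The trouble starts at the linearisation step, which is the heart of your argument and is not justified under the stated hypotheses. You need a predictable field $\gamma_s(x)$ satisfying simultaneously (i) $\f(s,Y'_s,Z'_s,U_s)-\f(s,Y'_s,Z'_s,U'_s)\le\int_{\R_0}\gamma_s(x)\,\delta U_s(x)\,\nu(dx)$ (equality is not needed, but the inequality in this direction is), (ii) $\gamma_s\ge-1$ $\nu$-a.e., and (iii) enough integrability, e.g.\ $\int_0^T\|\gamma_s\|^2\,ds<\infty$, for the Dol\'eans--Dade exponential to exist. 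None of the hypotheses delivers all three. G\^ateaux differentiability of $\bu\mapsto\f(s,y,z,\bu)$ does \emph{not} follow from Lipschitz continuity plus continuity (already $u\mapsto|u|$ on $\R$ fails at $0$, and on the infinite-dimensional space $L_2(\nu)$ there is no Rademacher-type theorem to invoke, let alone a measurable selection of derivatives along the random segment), so the mean-value representation is unavailable. The canonical selection $\gamma_s=\delta\f^U_s\,\delta U_s/\|\delta U_s\|^2$ gives (i) with equality and (iii) but not (ii); conversely, selections manufactured from \ref{gamma} --- which is assumed only for \emph{ordered} pairs $\bu\le\bu'$, e.g.\ splitting through $U_s\vee U'_s$ and taking $\gamma^{(1)}_s=-\one_{\{\delta U_s<0\}}$ --- give (ii) but violate (iii) whenever $\nu(\{\delta U_s<0\})=\infty$ (and the right-hand side of \ref{gamma} may then even be infinite). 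This is not a removable technicality: the counterexample of Barles--Buckdahn--Pardoux shows some condition on the jump dependence is indispensable, and the whole point of the weak, one-sided formulation \ref{gamma} is that it cannot in general be upgraded to a two-sided Royer-type linearisation.

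Two further problems would persist even if such a $\gamma$ existed. With only $\gamma\ge-1$ the exponential $L$ may vanish, so $\mathbb{Q}\ll\PP$ but not $\mathbb{Q}\sim\PP$, and a $\mathbb{Q}$-a.s.\ conclusion does not transfer back to $\PP$; the usual repairs are either $\gamma\ge-1+\delta$ (not available here) or avoiding the measure change and using the nonnegative exponential as a multiplier, showing that $L\,\delta Y$ plus a drift is a $\PP$-supermartingale. Moreover the true-martingale property of $L$ and the $\mathbb{Q}$-integrability of $(\delta Y,\delta Z,\delta U)$ needed to discard the local-martingale terms require separate work, since $\beta$ is only a stochastic coefficient with $\int_0^T\beta(\omega,s)^2ds\le c$ a.s. Note finally that this paper does not prove Theorem \ref{comparison} at all; it quotes it from \cite{SteinickeII}, where the generator is merely continuous in $(y,z,\bu)$, so no linearisation can even be formulated: the argument there stays under $\PP$, applies It\^o's formula to the positive part of the difference of the solutions, exploits \ref{gamma} through ordered comparisons together with convexity of the jump terms, and closes with the $\theta$-monotonicity and a Bihari-type lemma. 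Reworking your proof along those lines, rather than forcing a Girsanov transformation, is what the assumptions actually support.
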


\section{Existence  result, bounds and  Malliavin differentiability  for locally Lipschitz generators }\label{sec3}

To prove  Malliavin differentiability we restrict  ourselves to the following BSDE
\equal \label{bsde}
Y_t&=&\xi+\int_t^T  f\left( (X_r)_{r\le s}, s,Y_s, Z_s,      \int_{\R_0}  g(s, U_s(x)) \kappa (x) \nu(dx) \right)ds
 -     \int_t^T Z_s   dW_s \non \\
&&\hspace*{12em}-\int_{{]t,T]}\times{\R_0}}U_s(x) \tilde N(ds,dx), 
\tionl
where we use in the future the notation 
\equa
  G(t,\bu) :=  \int_{\R_0}  g(t, {\bf u}(x)) \kappa (x) \nu(dx), \quad {\bf u} \in L_2(\nu).
\tion
We assume  $$\kappa (x)  := 1 \wedge |x|.$$ 
\begin{rem}
To apply Malliavin calculus in the L\'evy setting, one may assume that $(\Omega,\mathcal{F},\mathbb{P})$ is  the canonical  space in the 
sense of Sol\'e et al. \cite{suv2}. On this space, since roughly speaking each $\omega \in \Omega$ represents a path of the L\'evy process 
$X = (X_t)_{t \in [0,T]}$, the Malliavin derivative $D_{t,x} \xi$ has a meaningful definition for every $\omega \in \Omega$ if  
 $\xi \in \DD.$ \\ 
Here we use a slightly different approach. We keep $(\Omega,\mathcal{F},\mathbb{P})$ as introduced in Subsection \ref{XandM} but assume any 
random object to be  a functional of $X$ so that  for  $D_{t,x}$ ($x \neq 0$) one can use Lemma  \ref{functionallem}, and  for $D_{t,0}$ we have the chain rule.
\begin{itemize}
\item
For the terminal condition $\xi$ the existence of such a functional is guaranteed by
 Doob's factorisation Lemma:  for any  $\mathcal{F}_T$-measurable $\xi  $ there exists a $g_\xi:  D{[0,T]} \to \R$ such that $\xi =  g_\xi(X)$ $\mathbb{P}$-a.s.
\item For a jointly measurable 
and adapted generator  $\f:\Omega\times{[0,T]}\times\mathbb{R}\times\mathbb{R}\times L_2(\nu)\to\mathbb{R} $   we have by \cite[Theorem 3.4]{Steinicke} that there exists 
 a jointly measurable $g_\f: D{[0,T]} \times{[0,T]}\times\mathbb{R}\times\mathbb{R}\times L_2(\nu)\to\mathbb{R}$ such that 
  $$ \f(\cdot, t,y,z,\bu)   = g_\f((X_s)_{s \in [0,T]},t,y,z,\bu) $$
  up to indistinguishability for the parameters $(t,y,z,\bu)$. Moreover, since $\f$ is adapted, for all $t$, the functional $g_\f((X_s)_{s \in [0,T]},t,\cdot,\cdot,\cdot)$ is $\mathcal{F}_t\otimes\mathcal{B}(\R)\otimes\mathcal{B}(\R)\otimes\mathcal{B}(L_2(\nu))$-measurable. Therefore, using \cite[Lemma 3.2]{Steinicke}, we may find a functional $g^t_\f: D{[0,T]} \times\mathbb{R}\times\mathbb{R}\times L_2(\nu)\to\mathbb{R}$ such that 
  $$g_\f((X_s)_{s \in [0,T]},t,\cdot,\cdot,\cdot)=g^t_\f((X_s)_{s \in [0,t]},t,\cdot,\cdot,\cdot),\quad\mathbb{P}\text{-a.s.}$$
  In other words, $g_\f$ is adapted to the filtration $\left(\mathcal{G}_t\right)_{t\in{[0,T]}}$  from \eqref{filtrationG-t}.  As $g_\f$ is adapted and measurable, there is a progressively measurable version of $g_\f$, denoted by $\bar{g_\f}$. Hence we found a progressively measurable functional to represent $\f$ in the way that 
   $$\f(\cdot, t,y,z,\bu)   = \bar{g_\f}((X_s)_{s \in [0,t]},t,y,z,\bu),\quad\mathbb{P}\text{-a.s.}$$
for all $(t,y,z,\bu)$.
\end{itemize}

\end{rem} 

The previous remark gives us the right to describe the dependency on $\omega$ through $(X_t(\omega))_{t \in [0,T]}$ in  \eqref{bsde}. For shortness of representation
we sometimes drop the dependence on $(X_t(\omega))_{t \in [0,T]}$ as it is usually done with $\omega.$

We agree on the following assumptions on $\xi$, $f$ and $g$:

\begin{assumption}
{\color{white} .} \label{assumptions}
\begin{enumerate}[label={($\mathcal{A}$\arabic*)}]
\item \label{Axi}
$A_\xi= \|\xi\|_{L^\infty(\PP)}< \infty,$ \\
 $\xi \in \DD,$  \\
 $A_{D\xi}(x) := \|(t, \om)\mapsto  D_{t,x}\xi \|_{L^\infty(\lambda \otimes \PP)} < \infty,$ \\
$\|A_{D\xi}\| < \infty.$

   \item \label{f-prime}
 for all $(y,z,u) \in \R^3$ the map
  $(\tx, t) \mapsto   f(\tx, t,y,z,u)$ is $(\mathcal{G}_t)_{t \in [0,T]}$- progressively measurable, \\
  for  all  $(\tx, t) \in D[0,T] \times [0,T], $  the functions $f ,$ $\partial_yf,  \partial_zf, \partial_uf$ are continuous in  $(y,z,u)$. 

 \item  \label{growth}
 \underline{integrability condition:} there exists a  function   $k_f   \in L_1([0,T])$ 
such that $\forall y \in \R$ 
and $\forall t \in [0,T]$  and $\forall \tx  \in D[0,T]$ it holds
\equa 
&&\hspace{-4em}    |f(\tx, t,0,0,0)|  \le   k_f(t). 
 \tion 
  \item \label{Lipschitz}
  \underline{local Lipschitz condition:} 
there exist nonnegative functions $a\in L_1([0,T]), b\in L_2([0,T])$  and  a non-decreasing continuous function $\rho: [0, \infty)  \to [0, \infty)$ 
such that $\forall t \in [0,T], \,(y,z,u), \\(\tilde{y},\tilde{z},\tilde{u})\in\R^3$ and $\forall  \tx  \in D[0,T]$ it holds 
\equa 
  |f({ \tx}, t,y,z,u)\!-\!f({\tx}, t,\tilde{y},\tilde{z},\tilde{u})| 
 \le   a(t)|y-\tilde{y}| \!+\! \rho(|z|\!\vee\!|\tilde{z}|\!\vee\!|u|\!\vee\!|\tilde{u}|)  b(t)  ( |z\!-\!\tilde{z}| \!+\! |u\!-\!\tilde{u}|).
 \tion

\item \label{A-Df}  
 \underline{Malliavin differentiability:} 
Assume that there exists  a function  $p \in L_1( [0,T],\la;  L_2(\R, \delta_0+\nu))$ 
such that if  
\equa
R &:=& A_\xi  e^{\int_0^T a(s)ds} +  \int_0^Tk_f(s) e^{\int_0^s a(r)dr}ds +1 \\
Q &:=& A_{D\xi}(0)e^{\int_0^T a(s)ds } +  \int_0^T p(s,0)  e^{\int_0^s a(r)dr}  ds +1 \\
P &:=&  \rho(2R) \|\kappa\|\Bigg (\| A_{D\xi}\|
e^{\int_0^T a(s)ds } 
  +  \int_0^T \| p(s, \cdot)\| \, e^{\int_0^s a(r)dr} ds \Bigg ) +1
\tion
and if
$rqp :=\{(y,z,u) \in \R^3:     |y| \le R, |z| \le Q, |u| \le P \}, $
then 
 \begin{enumerate}
 \item $\forall t\in [0,T], (y,z,u) \in rqp  :  f(X,t,y,z,u)  \in \DD,$
\item  for a.e. $(t,x)$  
\equa A_{Df}(t, x) :=\sup_{(y,z,u) \in rqp}\| (\omega ,s) \mapsto  (D_{s,x}  f\left(X,t,y, z,u \right))(\om) \|_{L_\infty(\PP\otimes \la)} 
\le  p(t,x).\tion
\end{enumerate}

\item   \underline{Malliavin regularity:} \label{Df-condition} 
 $\forall t\in{[0,T]}$,  
$\exists\ K^{t}\in\bigcup_{p> 1}\mathrm{L}_p$ such that  for a.a. $\omega$ and for all $(y,z,u),\\ (y', z',u') \in rqp$
\equa
 && \| \left(D_{\cdot,0}f(X, t, y,z,  u)\right)(\omega)-\left(D_{\cdot,0}f(X, t,  y',z',  u')\right)(\omega) \|_{L_2([0,T])} \\
 && \le K^{t}(\omega)(|y-y'|+|z-z'|+ |u-u'|),
\tion
\item   \label{new-g-condition}
 $g:[0,T]\times \R \to \R$ is jointly measurable
$ u \mapsto   \partial_u g(t,u) $ is continuous for all $t  \in [0,T] ,$ and  it holds  
  $$   g(t,0)=0  \quad \text{and } \quad  |\partial_u g(t,u)|  \le  \rho(|u|).$$
 
\item  \label{bounded-derivatives} 
  for all $t \in [0,T], \tx  \in D[0,T]$ and $y,z,u,u'\in \R$ it holds  $$ -1 \le \partial_u f({ \tx }, t,y,z,u) \partial_u g(t,u'). $$ 
\end{enumerate}
\end{assumption}
\bigskip

We continue with some observations and comments about these assumptions:  
\begin{rem} \rm
\begin{enumerate}[label={(\roman*)}]
\item Notice that $\| A_{D\xi} \| \le 2 A_\xi$ follows immediately from \eqref{xieq}.
\item Assumption \ref{A-Df} is trivially satisfied if there exists a $p \in L_1( [0,T],\la;  L_2(\R, \delta_0+\nu))$  such that 
$$\| (\omega ,s) \mapsto  (D_{s,x}  f\left(X,t,y, z,u \right))(\om) \|_{L_\infty(\PP\otimes \la)} \le p(t,x) $$
holds uniformly in $(y,z,u).$  
\item {\ch We need \ref{Df-condition} for the following reason:  It is not clear whether 
 the assumption  $ f(X,t,y,z,u)  \in \DD$ in \ref{A-Df} (a) together with the continuity of the partial derivatives required in \ref{f-prime}
 implies that $f(X,t,Y_t,Z_t,G(t,\bu)) \in \DD.$  Therefore, 
in \cite[Theorem 3.12]{GeissSteinII} it was shown that $G_1,G_2,G_3 \in \DD \implies f(X,t, G_1,G_2,G_3)\in \DD $
if additionally  the Malliavin regularity assumption \ref{Df-condition} holds. 
}  
\item The mean value theorem  implies that condition \ref{bounded-derivatives} is sufficient for \ref{gamma}.  
\end{enumerate}
\end{rem}
  \bigskip
The next theorem is a generalisation of Corollary  2.8 \cite{CheriditoNam} to the jump case.
For the proof of  Corollary  2.8 \cite{CheriditoNam} a comparison theorem is used  to show the boundedness of  the process $Y$ and  its 
Malliavin derivative. We will follow this idea, but for jump processes  stronger conditions are needed for comparison theorems to hold 
(see the counter example given in   \cite[Remark 2.7]{bbp}). In fact, the condition we need is \ref{gamma}.

{\ch   Malliavin differentiability of solutions to BSDEs was considered for example in   \cite{PP1},   \cite{ImkellerDelong}, \cite{ElKarouiPengQuenez}  and \cite{MaPoRe}.       
The usual procedure  - which we follow here also -  is to impose conditions on the generator and show via Picard iteration that the solution is Malliavin differentiable.
The approach in   \cite{MaPoRe} is different  since there  conditions on the  Malliavin differentiability of the  generator with the solution processes already plugged in were considered.  
}

\begin{thm}  \label{bounded-sol-x}  Assume that  \ref{Axi}  - \ref{bounded-derivatives} hold. 
Then there exists a  solution $(Y,Z,U)$ to \eqref{bsde}  which is unique in the class 
$ \mathcal{S}_\infty \times  L_\infty(W)\times (L_2(\tilde N) \cap L_{2 \times\infty}(\tilde N) ),$  
and it holds  a.s.
\equal \label{Y-estimate}
  |Y_t | &\le&  A_\xi  e^{\int_t^T a(s)ds} +  \int_t^Tk_f(s) e^{\int_t^s a(r)dr}ds  \le R-1, 
          \tionl
and for a.e. $t \in [0,T]:$
\equal \label{Z-estimate}
 |Z_t| \le  A_{D\xi}(0)e^{\int_t^T a(s)ds } +  \int_t^T  p(s,0) 
 e^{\int_t^s a(r)dr}  ds
 \leq Q -1,
 \tionl 
and for  $\la \otimes \nu$-a.e. $(t,x) \in [0,T] \times \R_0:$
\equal \label{U-estimate}
 |U_t(x)| \le  \left (A_{D\xi}(x)e^{\int_t^T a(s)ds }  +  \int_t^T  
 p(s,x)  \,e^{\int_t^s a(r)dr} ds \right ) \wedge\ (2R-2), 
 \tionl  
 which means that $U \in L_{2,b}(\tilde N).$ \\
 Moreover, it holds that $(Y,Z, U)$ is Malliavin differentiable, i.e.
\equa
Y, Z   \in L_2([0,T];\DD), \quad U\in L_2([0,T]\times\R_0;\DD),
\tion
and for $\m$- a.e. $(r,x)$  the triple $(D_{r,x}Y, D_{r,x}Z, D_{r,x}U)$  solves
\equa 
D_{r,x}Y_t& = &D_{r,x}\xi+\int_t^T F_{r,x}\left(s,D_{r,x}Y_s, D_{r,x}Z_s,D_{r,x} U_s \right)ds -\int_t^T D_{r,x}Z_sdW_s\\
&&-\int_{{]t,T]}\times{\R_0}} D_{r,x} U_{s}(v)\tilde{N}(ds,dv), \quad 0\le r \le t \le T, \non
\tion
where   $ \Theta_s :=(Y_s,Z_s, G(s,U_s))$ and
\equa
  F_{r,0} (s,  y,z,\bu)
 &:=& (D_{r,0}  f) (s, \Theta_s) + \partial_y f(s, \Theta_s)  y  + \partial_z f(s, \Theta_s) z \\
&& + \partial_u  f (s,\Theta_s) \int_{\R_0}  \partial_u   g(s,  U_s(v)) \bu(v)    \kappa (v) \nu(dv), 
\tion
and  for $x \neq 0,$
\equa
    F_{r,x} (s, y,z,\bu )&:=& (D_{r,x}  f ) (X, s, \Theta_s )   \\
 && +   f  (X+ x\one_{[r,T]}, s, \Theta_s  +  (y,z, G(s,U_s+\bu)))    - f (X+ x\one_{[r,T]},s, \Theta_s ). \\
\tion 
Setting  $D_{r,v}Y_r(\omega):= \lim_{t\searrow r}D_{r,v}Y_t(\omega) $
for all $(r,v,\omega)$ for which  $ D_{r,v}Y$ is c\`adl\`ag and $D_{r,v}Y_r(\omega):=0$ otherwise, we have

\begin{align*} {\phantom{\int}}^p\left(\left(D_{r,0}Y_r\right)_{r\in{[0,T]}}\right) &\text{ is a version of  }(Z_{r})_{r\in{[0,T]}},\\
{\phantom{\int}}^p\left(\left(D_{r,v}Y_r\right)_{r\in{[0,T]}, v\in\R_0}\right) &\text{ is a version of  }(U_{r}(v))_{r\in{[0,T]}, v\in\R_0}.
\end{align*}
\end{thm}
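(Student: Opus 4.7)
The plan is to adapt the Cheridito--Nam truncation strategy to the L\'evy setting. First I would fix the constants $R,Q,P$ from Assumption \ref{A-Df} and truncate the generator in its third and fourth slots: with $\pi_M(v):=(v\wedge M)\vee(-M)$, define
\[
f^{Q,P}(\tx,t,y,z,u):=f(\tx,t,y,\pi_Q(z),\pi_P(u)),\qquad \f^{Q,P}(s,y,z,\bu):=f^{Q,P}(X,s,y,z,G(s,\bu)).
\]
Using \ref{Lipschitz} together with monotonicity of $\rho$, the map $\f^{Q,P}$ is globally Lipschitz in $(y,z,\bu)$ with coefficients $a\in L_1$ and $\rho(Q\vee P)b\in L_2$, so Theorem \ref{existence} supplies a unique solution $(Y,Z,U)\in\mathcal{S}_2\times L_2(W)\times L_2(\tilde N)$. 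Next, Assumptions \ref{A-Df}, \ref{Df-condition}, \ref{new-g-condition} and \ref{bounded-derivatives} transfer to $f^{Q,P}$ (truncation preserves Malliavin regularity and does not increase the partial-derivative bounds), so the Lipschitz Malliavin differentiability result of Appendix \ref{app} yields $Y,Z\in L_2([0,T];\DD)$, $U\in L_2([0,T]\times\R_0;\DD)$ and the linear BSDEs for $(D_{r,x}Y,D_{r,x}Z,D_{r,x}U)$ with the generators $F_{r,x}^{Q,P}$ (obtained from $\f^{Q,P}$ by the usual Malliavin chain rule that splits into a $y$-linear piece at $x=0$ and a difference piece for $x\neq 0$).

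Having the truncated solution and its Malliavin derivatives, I would now deploy the comparison theorem \ref{comparison} three times. For $Y$: \ref{growth} and $g(s,0)=0$ give $|\f^{Q,P}(s,y,0,0)|\le k_f(s)+a(s)|y|$, and condition \ref{gamma} is ensured by \ref{bounded-derivatives}; comparison against the deterministic ODE BSDE with terminal value $A_\xi$ and generator $k_f(s)+a(s)|y|$ yields \eqref{Y-estimate} and in particular $|Y_t|\le R-1$. For $Z$: the Malliavin BSDE for $D_{r,0}Y_t$ is linear with $|\partial_yf|\le a(t)$, $|\partial_zf|\le\rho(Q)b(t)$, $|\partial_uf\,\partial_ug|\le\rho(P)\rho(2R)b(t)^2$ all finite, driving term $|D_{r,0}f(\cdot,\Theta)|\le p(t,0)$ by \ref{A-Df}(b), and terminal datum $|D_{r,0}\xi|\le A_{D\xi}(0)$; comparison yields the pointwise bound for $D_{r,0}Y_t$ and then the a.e. bound \eqref{Z-estimate} follows from the identification of $Z_r$ as the predictable projection of $D_{r,0}Y_r$. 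For $x\neq 0$: apply the same argument to $D_{r,x}Y$, whose generator is again of Lipschitz type because the difference form of $F_{r,x}$ inherits $a,\rho,b$ from \ref{Lipschitz}; this gives the first alternative in \eqref{U-estimate}, while the second alternative $2(R-2)$ is obtained directly from Lemma \ref{functionallem} combined with the bound $|Y|\le R-1$ via $|U_t(x)|=|D_{t,x}Y_t|=|Y_t(X+x\one_{[t,T]})-Y_t(X)|$.

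The crucial closing step is to check that the truncation is inactive on the solution: by the bounds just derived, $|Z_t|\le Q-1<Q$ so $\pi_Q(Z_t)=Z_t$, and Cauchy--Schwarz together with $|g(s,U_t(x))|\le\rho(2R)|U_t(x)|$ (mean-value from \ref{new-g-condition}) and the $L_2(\nu)$-bound from \eqref{U-estimate} give
\[
|G(t,U_t)|\le\rho(2R)\,\|\kappa\|\!\left(\|A_{D\xi}\|e^{\int_0^T a}+\int_0^T\|p(s,\cdot)\|e^{\int_0^s a}\,ds\right)=P-1<P,
\]
so $\pi_P(G(t,U_t))=G(t,U_t)$. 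Hence $\f^{Q,P}\equiv\f$ along the trajectory and $(Y,Z,U)$ solves \eqref{bsde}. Uniqueness in $\mathcal{S}_\infty\times L_\infty(W)\times(L_2(\tilde N)\cap L_{2\times\infty}(\tilde N))$ is obtained by observing that any other solution in this class lives inside the same truncation region for sufficiently large $Q',P'$, hence coincides with the unique Lipschitz-BSDE solution.

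The main obstacle I anticipate is the self-consistency of the bounds: the truncation thresholds $Q,P$ are built from the very bounds one is about to derive, so one must verify that the comparison-theorem output is \emph{strictly} below $Q,P$. This is exactly the purpose of the ``$+1$'' summands in the definition of $R,Q,P$. A secondary difficulty is the verification of \ref{gamma} for the Malliavin BSDE, which in the linear case at $x=0$ reduces to $\partial_uf^{Q,P}\,\partial_ug\ge-1$, granted by \ref{bounded-derivatives}, and at $x\neq 0$ uses the difference form of $F_{r,x}$ together with the same sign condition to obtain the required integral inequality.
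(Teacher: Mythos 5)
Your overall strategy (truncate, solve a Lipschitz BSDE, differentiate in the Malliavin sense, use the comparison theorem twice to bound $Y$ and its Malliavin derivatives, then check the truncation is inactive, and conclude uniqueness from local Lipschitzness on the bounded range) is the same as the paper's. However, your truncation is wrong in a way that breaks the very first step. You only cut off the \emph{outer} value $u=G(s,\bu)$ at level $P$ (and $z$ at level $Q$), but $g$ is merely locally Lipschitz with modulus $\rho(|u|)$ by \ref{new-g-condition}, so the map $\bu\mapsto G(s,\bu)$ is \emph{not} globally Lipschitz from $L_2(\nu)$ to $\R$ — for unbounded $\bu\in L_2(\nu)$ and, say, exponential $\rho$ the integral defining $G(s,\bu)$ need not even be finite, and $|\pi_P(G(s,\bu))-\pi_P(G(s,\bu'))|$ cannot be bounded by $C\|\bu-\bu'\|$. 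Hence your claim that $\f^{Q,P}$ is globally Lipschitz in $(y,z,\bu)$ is false, so neither Theorem \ref{existence} nor Theorem \ref{comparison} applies to it, and the whole construction does not get off the ground. The paper's truncation is $\widehat G(s,\bu)=b_{P}\bigl(G(s,b_{2R}(\bu))\bigr)$, i.e. it also truncates $\bu$ \emph{pointwise} at $2R$ (justified a posteriori by $|U_t(x)|=|D_{t,x}Y_t|\le 2\sup_t|Y_t|\le 2R-2$), and this inner truncation is exactly what produces the Lipschitz bound $|\widehat G(s,\bu)-\widehat G(s,\bu')|\le\rho(2R)\|\kappa\|\,\|\bu-\bu'\|$.

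Two further points in the same vein. First, you do not truncate $y$, yet Assumptions \ref{A-Df} and \ref{Df-condition} are only imposed on the bounded region $rqp$; without composing with a cutoff $b_R(y)$ you cannot verify the global hypotheses (${\A}_f$) \ref{f-Mall-diff-bounded} and \ref{f-Mall-diff-Lip} of Theorem \ref{diffthm}, so the assertion that the Malliavin assumptions ``transfer to $f^{Q,P}$'' is unjustified. Second, the hard cutoff $\pi_M$ is not differentiable, which conflicts with the continuity of partial derivatives required in (${\A}_f$) \ref{f-meas} and with the chain-rule computation of the linear BSDE for $D_{r,0}Y$ (where the derivatives of the cutoffs appear explicitly); the paper therefore uses a smooth monotone truncation $b_M$ with $0\le b_M'\le 1$. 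These are fixable by adopting the paper's smooth three-fold truncation $f(s,b_R(y),b_Q(z),b_P(G(s,b_{2R}(\bu))))$, after which your comparison arguments, the self-consistency check $|G(t,U_t)|\le P-1$, and the uniqueness step proceed as you outline (modulo the typo $2(R-2)$ for $2R-2$).
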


(The definition of the  objects $(D_{r,x} f) (s, \Theta_s )$,  where we first apply the Malliavin derivative to 
$f$ and afterwards insert the expressions in $\Theta_s$, indeed constitute well defined measurable objects because of the continuity assumptions 
on $\f$, as is explained in \cite[Lemma 3.5 ff]{GeissSteinII} or \cite[Remark 5.3 (ii)]{Steinicke}.)

\begin{proof} 
{\bf Step 1}
For $M \in \R_+$ let  { $b_M :\R \to [-M,M]$} be a smooth monotone function such that  $0\le b'_M(x) \le 1$ and
\equa
b_M(x) := \left \{ \begin{array}{cl} M, &\,\, x>M+1,\\
x,  & \,\, |x| \le  M -1, \\
-M, &\,\, x< -M-1.
\end{array} \right .
\tion
Notice that $|b_M(x)|\le |x|$.
We set  (using $R,Q$ and $P$ from \ref{A-Df})
\equa
\widehat f(s,y,z, G(s, \bu)):=  f(s,b_R(y), b_Q(z),  \widehat G(s, \bu)),
\tion
where $$ \widehat G(s, \bu) := b_{P}(  G(s, b_{2R}(\bu)))  \quad \text{ and } \quad   b_{2R}(\bu)(x) := b_{2R}(\bu(x)).$$

We will show first that $ \widehat \f(s,y,z,\bu):=\widehat f(s,y,z, G(s, \bu))$ satisfies the  assumptions of Theorem \ref{existence}.
We have  \ref{1} because of \ref{f-prime}  and \ref{new-g-condition}; while  \ref{3} follows from \ref{Lipschitz} and \ref{new-g-condition}. Indeed, since
\ref{new-g-condition} implies 
\equa
 |\widehat G(s, \bu)- \widehat G(s, \bu')|   &\le&\sup_{u \in [-2R,2R]} |\partial_u g(s,u)|\int_{\R_0} |\bu(x)-\bu'(x)|\kappa(x)\nu(dx) \\
&\le& \rho(2R) \|\kappa \|\|\bu -\bu'\|,
\tion
it holds 
\equal \label{Lipschitz-estimate}
&& \hspace{-3em} |f(\tx, s,b_R(y), b_Q(z), \widehat G(s, \bu))  -f(\tx, s,b_R(y'), b_Q(z'), \widehat G(s, \bu')) | \notag \\
&\le& a(s)  |y-y'| + \rho({Q \vee P})b(s) (|z-z'| +  |\widehat G(s, \bu)- \widehat G(s, \bu')|) \notag \\
&\le&  a(s)  |y-y'| + \rho({Q \vee P}) (1+\rho(2R) \|\kappa \|)  b(s) (|z-z'| +  \|\bu -\bu'\| ).
\tionl
Now we combine the last inequality for $y'=0, z'=0,$ and $\bu' =0$ with \ref{growth} to get \ref{2}:
\equa
 |f(\tx, s,b_R(y), b_Q(z),  \widehat G(s, \bu))| 
\le k_f(s) +a(s)|y| +  \rho({Q \vee \!P}) (1+\rho(2R) \|\kappa \|) b(s)  (|z|  +  \|\bu\| ).
\tion
Hence by Theorem \ref{existence} there exists
 for any $\xi \in L_2$   a unique solution   $(\widehat Y, \widehat Z, \widehat U)$ to  \eqref{bsde} with data $(\widehat \f, \xi).$ \\
 
 Assumption \ref{bounded-derivatives} implies that $\widehat \f$ satisfies  \ref{gamma} from Theorem \ref{comparison}. 
 \bigskip

{\bf Step 2}
From   \ref{growth}  and \eqref{Lipschitz-estimate} we conclude that  $\forall s \in [0,T]$ and $\forall (y,z,\bu) $
it holds
\equa
|f(\tx, s,b_R(y), b_Q(z),  \widehat G(s, \bu))| 
&\le& k_f(s) +a(s)|y| +  b(s) \rho( Q\vee P) (|z|  +   | \widehat G(s,\bu)|).
\tion
We want to apply  the comparison theorem to the  BSDEs:
\equal 
\ol Y_t &=& A_{\xi} + \int_t^T k_f(s) +a(s)|\ol Y_s| + b(s)\rho(Q\vee P)(|\ol Z_s|  +   |\widehat G(s,\ol U_s)|)ds  \non \\
&&- \int_t^T \ol Z_s dW_s    - \int_{]t,T] \times{\R_0}} \ol U_s(x) \tilde N(ds,dx), \label{olY}   \\
\widehat Y_t &=& \xi + \int_t^T\widehat f(s, \widehat Y_s, \widehat Z_s , G(s, \widehat U_s))ds  \non \\
&&- \int_t^T \widehat  Z_s dW_s    - \int_{]t,T] \times{\R_0}}  \widehat  U_s(x) \tilde N(ds,dx), \non \\
\ul Y_t &=& -A_{\xi} - \int_t^T  k_f(s) +a(s)|\ul Y_s|  +b(s)\rho( Q\vee P)(|\ul Z_s|  +   |\widehat G(s,\ul U_s)|)ds  \non \\
&&- \int_t^T \ul Z_s dW_s    - \int_{]t,T] \times{\R_0}} \ul U_s(x) \tilde N(ds,dx). \label{ulY}
\tionl
By Step 1 the generator $\widehat f$ satisfies the conditions  \ref{1}-\ref{3} and \ref{gamma}.  
Since also  the generators of $\ol Y$ and $\ul Y$  satisfy the conditions \ref{1}-\ref{3}, Theorem \ref{comparison} implies that

\equa
\ul Y_t \le \widehat Y_t \le \ol Y_t, \quad \forall t \in [0,T]  \,\,\, \PP \text{-a.s.}
\tion
By Theorem \ref{existence} we have that $(\ol Y, 0,0)$ and $(\ul Y, 0,0)$ are the unique solutions to \eqref{olY} and \eqref{ulY}, respectively, and 
\equa
\ol Y_t = -\ul Y_t&=& A_\xi  e^{\int_t^T a(s)ds} +  \int_t^Tk_f(s) e^{\int_t^s a(r)dr}ds \\
&\le & A_\xi  e^{\int_0^T a(s)ds} +  \int_0^Tk_f(s) e^{\int_0^s a(r)dr}ds =R-1, 
\tion
where $R$ was defined in \ref{A-Df}. This gives \eqref{Y-estimate}   for  $\widehat Y.$

{\bf Step 3}  To consider Malliavin derivatives we check the conditions of Theorem \ref{diffthm}  for the BSDE with data $(\widehat f, \xi).$ Condition  \ref{Axi} 
implies that ($\A_{\xi}$) is satisfied. Condition (${\A}_f$) \ref{f-meas}
follows from \ref{f-prime}.   Condition \ref{growth} implies  (${\A}_f$) \ref{f(000)-Ltwo}. 

The Lipschitz  continuity required in (${\A}_f$) \ref{f-Lip} is fulfilled because of 
\ref{Lipschitz}. Furthermore, we have the implications 
\ref{A-Df} $\implies$ (${\A}_f$) \ref{f-Mall-diff-bounded},
 \ref{Df-condition} $\implies$ (${\A}_f$) \ref{f-Mall-diff-Lip}
and \ref{new-g-condition} $\implies$ (${\A}_f$) \ref{g-condition}.

Consequently, we may consider the Malliavin derivative of the BSDE \eqref{bsde} with data $(\widehat f, \xi).$ 

Let $ \Theta_s = (\widehat Y_s, \widehat Z_s, \widehat G(s,\widehat U_s)).$
Then
\equal \label{derivative-0}
 D_{r,0} \widehat Y_t
&=& D_{r,0} \xi \,+\int_t^T \bigg[ (D_{r,0}  \widehat f) (s,  \Theta_s)  + \partial_y \widehat f (s,\Theta_s) D_{r,0}  \widehat Y_s  
  + \partial_z  \widehat f (\Theta_s) D_{r,0}  \widehat Z_s \non  \\
&&  + \partial_u \widehat f (s,\Theta_s) b_P'(G(s, b_{2R}(\widehat U_s))) \non \\
&&  \quad \quad \quad \quad \times\int_{\R_0}  \partial_u   g(s,  b_{2R}( \widehat U_s(v)))b_{2R}'(\widehat U_s(v)) D_{r,0} \widehat U_s(v)    \kappa (v) \nu(dv)\bigg]  ds
 \non \\
&&       -     \int_t^T D_{r,0}  \widehat Z_s   dW_s        -\int_{{]t,T]}\times{\R_0}}D_{r,0} \widehat U_s(v) \tilde N(ds,dv).
\tionl
By \ref{A-Df} we have $| (D_{r,0}  \widehat f) (s,  \Theta_s)| \le p(s,0),$ and the Lipschitz coefficients  from \eqref{Lipschitz-estimate} are bounds for the partial
derivatives, so that by Theorem \ref{comparison}, the solutions of  
\equa
 \ol{\Y^{r,0}_t} = &&\hspace{-1em} A_{D\xi}(0) \,+ \int_t^T \bigg[ p(s,0) + a(s) \, | \ol{\Y^{r,0}_s}|
  + \rho({Q \vee P}) b(s) \, |\ol{ \Z^{r,0}_s} | \non \\
  && \hspace*{2em}+  \rho({Q \vee P}) b(s)\rho(2R) \int_{\R_0}  | \ol{ \U^{r,0}_s(v)} | \,  \kappa (v) \nu(dv) \bigg] ds
 \non \\
&&\hspace*{1em}        -     \int_t^T  \ol{\Z^{r,0}_s} dW_s        -\int_{{]t,T]}\times{\R_0}} \ol{\U^{r,0}_s(v)} \tilde N(ds,dv),
\tion
and
\equa
\ul{ \Y^{r,0}_t} =&&\hspace{-1em} -A_{D\xi}(0) \,- \int_t^T\bigg[  p(s,0) + a(s) \, |\ul{\Y^{r,0}_s}|
  + \rho({Q \vee P}) b(s)  \,  |\ul{\Z^{r,0}_s}|   \non \\
  && \hspace*{1em}+  \rho({Q \vee P}) b(s)\rho(2R) \int_{\R_0}  | \ul{\U^{r,0}_s(v)} | \, \kappa (v) \nu(dv) \bigg] ds
 \non \\
&&\hspace*{0.5em}        -     \int_t^T \ul{\Z^{r,0}_s}   dW_s        -\int_{{]t,T]}\times{\R_0}} \ul{\U^{r,0}_s(v)} \tilde N(ds,dv), 
\tion
satisfy $\ul \Y^{r,0}_t  \le D_{r,0} \widehat Y_t  \le \ol \Y^{r,0}_t.$ Note that condition \ref{gamma} required in Theorem \ref{comparison} is satisfied by the linear 
generator of equation \eqref{derivative-0} using assumption \ref{bounded-derivatives}.
The above equations do have  unique solutions where $\ol \Y$ and $\ul \Y$ are given by
\equa
\ol{\Y^{r,0}_t} = - \ul{ \Y^{r,0}_t}  =   A_{D\xi}(0)e^{\int_t^T a(s)ds } +  \int_t^T p(s,0) e^{\int_t^s a(r)dr}  ds\le Q-1.
\tion
According to Theorem \ref{diffthm} \ref{dalet} we have that  $D_{r,0}\widehat Y_r(\omega):= \lim_{t\searrow r}D_{r,0}\widehat Y_t(\omega).$
{\ch Since  $D_{r,0}\widehat Y_r(\omega)$ is measurable and bounded,  its predictable projection exists  and} is a version of  $(Z_r)_{r\in [0,T]}$  which proves \eqref{Z-estimate} for the BSDE with  $(\widehat f, \xi)$.
For $x \neq 0$ we get a similar result:
 \equa
D_{r,x} \widehat Y_t &=& D_{r,x} \xi \,+\int_t^T\bigg[   (D_{r,x}  \widehat f) (s,  \Theta_s)  + 
 \widehat {f} (X+ x\one_{[r,T]}, s, \Theta_s  +  D_{r,x} \Theta_s)  \notag \\
 &&   - \widehat{f}  (X+ x\one_{[r,T]},s, \Theta_s )
  \bigg]   ds  
     -     \int_t^T D_{r,x}  \widehat Z_s   dW_s        -\int_{{]t,T]}\times{\R_0}}D_{r,x} \widehat U_s(v) \tilde N(ds,dv),  \notag \\
 \ol{\Y^{r,x}_t}& = & A_{D\xi}(x) \,+ \int_t^T\bigg[  p(s,x) + a(s) \, |\ol{\Y^{r,x}_s}|
  + \rho({Q \vee P}) b(s)   \, | \ol{ \Z^{r,x}_s} | 
  \non \\   
 && +   \rho({Q \vee P}) b(s)\rho(2R) \int_{\R_0}  | \ol{ \U^{r,x}_s(v)} | \,  \kappa (v) \nu(dv) \bigg] ds
 \non \\
&&     -     \int_t^T  \ol{\Z^{r,x}_s} dW_s        -\int_{{]t,T]}\times{\R_0}} \ol{\U^{r,x}_s(v)} \tilde N(ds,dv),
\tion
and
\equa
\ul{ \Y^{r,x}_t} &=& -A_{D\xi}(x) \, - \int_t^T\bigg[  p(s,x) + a(s)  \, |\ul{\Y^{r,x}_s}|
  + \rho({Q \vee P}) b(s)  \, | \ul{\Z^{r,x}_s} |  \non \\
  && +  \rho({Q \vee P}) b(s)\rho(2R)  \int_{\R_0}   |\ul{\U^{r,x}_s(v)} | \, \kappa (v) \nu(dv) \bigg] ds
 \non \\
&&        -     \int_t^T \ul{\Z^{r,x}_s}   dW_s        -\int_{{]t,T]}\times{\R_0}} \ul{\U^{r,x}_s(v)} \tilde N(ds,dv), 
\tion
We get $\ul \Y^{r,x}_t  \le D_{r,x} \widehat Y_t  \le \ol \Y^{r,x}_t,    $ where
\equa
\ol{\Y^{r,x}_t} = - \ul{ \Y^{r,x}_t}  =  A_{D\xi}(x)e^{\int_t^T a(s)ds }  +  \int_t^T  p(s,x)e^{\int_t^s a(r)dr} ds.
\tion
 Additionally, notice that according to  Theorem \ref{diffthm} \ref{dalet}, $\widehat U_t(x)$ can be expressed $\mathbb{P}\otimes\m$-a.e. as 
 $\lim_{r\searrow t}D_{t,x} \widehat Y_r$ for $ x\neq 0.$ 
 Thus, by the representation \eqref{xieq} of $D$ as difference operator in this case, we end this step by stating 
$$|\widehat U_t(x)|\leq 2\sup_{0\leq t \leq T} |\widehat Y_t|\leq  2(R-1),\quad \mathbb{P}\otimes\m\text{-a.e.},$$
 which implies the estimate \eqref{U-estimate} for $\widehat U$. {\ch Again, since $\widehat U_t(x) =\lim_{r\searrow t}D_{t,x} \widehat Y_r$ is measurable and bounded, we can take its predictable projection.}
Moreover,  by the definition of $P$ in \ref{A-Df},
\equa
 \| \widehat U_t \| &\le&  \|  A_{D\xi} \|e^{\int_0^T a(s)ds }  
                 +  \int_0^T \| p(s,\cdot )\| e^{\int_0^s a(r)dr} ds = \frac{P-1}{\rho(2R) \| \kappa \|}, \quad\text{so that } \widehat U \in  L_{2,b}(\tilde N).
\tion
  
 {\bf Step 4} The assertion is now shown for the generator $\widehat \f$, thus the goal of this step is to obtain the results also for 
 $\f$ without any cut-off restraints.
By Step 3 we get $|\widehat Y_t|\leq R -1 ,|\widehat Z_t|\leq Q-1$, a.s., $|\widehat U_t(x)|\leq 2R-2$ $\mathbb{P\otimes\m}$-a.e. 
and 
\equa
 | G(s,\widehat U_s)| 
&=&   \left |  \int_{\R_0}  g(s, \widehat U_s(x)) \kappa (x) \nu(dx) \right | \notag \\
&\le & \rho(2R) \int_{\R_0} |\widehat U_s(x) | \kappa (x) \nu(dx) \notag  \\
&\le & \rho(2R) \| \kappa \| \,  \|\widehat U_s \| \le P-1.  
\tion

In that case, the equality 
$$\widehat f(t,\widehat Y_t,\widehat Z_t, G(t,\widehat U_t))= f(t,\widehat Y_t,\widehat Z_t, G(t,\widehat U_t))$$ 
holds almost surely for every $t\in{[0,T]}$. Therefore, the solution $(\widehat Y,\widehat Z, \widehat U)$ to the BSDE with data 
$(\widehat \f, \xi)$ also serves as solution of the equation given by $(\f, \xi)$, which is \eqref{bsde}.
  
{\bf Step 5} Finally we show the uniqueness of solutions in the space  $ \mathcal{S}_\infty \times  L_\infty(W)\times (L_2(\tilde N) \cap L_{2 \times\infty}(\tilde N) ).$  
Assume that  we have two solutions $(Y^{(j)},Z^{(j)} , U^{(j)}  )$ ($j=1,2$)  with  $ \sup |Y^{(j)}_t| \le R_j-1 $ and  $ \sup |Z^{(j)}_t| \le  Q_j-1$  
and ${\rm ess\, sup}_{s, \om}  \| U^{(j)}_s \| \le C_j.$
Then   $ \sup |U^{(j)}_t(x)| \le 2R_j-2 ,$ and  by \ref{new-g-condition}

\equa
 |G(s,U^{(j)}_s)| 
&=&   \left |  \int_{\R_0}  g(t, U^{(j)}_s(x)) \kappa (x) \nu(dx) \right |\\
&\le & \rho(2R_j-2) \int_{\R_0} | U^{(j)}_s(x) | \kappa (x) \nu(dx) \\
&\le & \rho(2R_j-2) \| \kappa \| \, {\rm ess\, sup}_{s, \om}  \| U^{(j)}_s \|   \\
&\le & \rho(2R_j-2) \| \kappa \|C_j =:M_j .
\tion

Hence from \ref{Lipschitz} and   it follows that 
\equa 
&& \les |\f({ \om}, t,Y_t^{(1)},Z_t^{(1)},U^{(1)}_t)-\f({ \om}, t,Y_t^{(2)},Z_t^{(2)},U^{(2)}_t)|  \\
 &\le& a(t)|Y_t^1-Y_t^2| +b(t) \rho(Q_1\vee Q_2\vee M_1\vee M_2 )
\, ( |Z_t^1-Z_t^2 | + |G(t,U^{(1)}_t)-G(t,U^{(2)}_t)|) 
\tion
And by \ref{new-g-condition}
$$
|G(t,U^{(1)}_t)-G(t,U^{(2)}_t)| \le \rho(|2R_1|\vee|2R_2|)  \, \| \kappa \| \, \| U^{(1)}_t - U^{(2)}_t\|.
$$
 Because  the processes $(Y^{(j)},Z^{(j)} , U^{(j)})$ are bounded and $f$ is locally Lipschitz,  $f$  restricted to a bounded set 
is Lipschitz,  and   uniqueness  follows from Theorem \ref{existence}. 
\end{proof}

\begin{rem}
By a mollifying argument, to weaken differentiability conditions on the generator, assumptions \ref{f-prime} and \ref{new-g-condition} may be relaxed to\begin{enumerate}[label={(A\arabic*')}]
\setcounter{enumi}{1}
 \item 
 for all $(y,z,u) \in \R^3$ the map
  $(\tx, t) \mapsto   f(\tx, t,y,z,u)$ is $(\mathcal{G}_t)_{t \in [0,T]}$- progressively measurable, \\
  for  all  $(\tx, t) \in D[0,T] \times [0,T]$ and the function $f $ is continuous in  $(y,z,u)$. 
\setcounter{enumi}{6}
\item   
 $g:[0,T]\times \R \to \R$ is jointly measurable and for all $t  \in [0,T]$ it holds  $g(t,0)=0$  and 
  $$ \forall u,u'\in \R: |g(t,u)-g(t,u')|  \le  \rho(|u|\vee|u'|)|u-u'|.$$
\end{enumerate}
However, instead of \ref{A-Df} we have to impose the slightly stronger condition
\begin{enumerate}[label={(A\arabic*')}]
\setcounter{enumi}{4}
\item  \label{A-Df'}  
 Assume that there exists $\varepsilon>0$ and a function  $p \in L_1( [0,T],\la;  L_2(\R, \delta_0+\nu))$ 
such that if  
$$rqp_\varepsilon :=\{(y,z,u) \in \R^3:     |y| \le R+\varepsilon, |z| \le Q, |u| \le P+\varepsilon \},$$ with $R,Q,P$ from \ref{A-Df},
then 
 \begin{enumerate}
\item $\forall t\in [0,T], (y,z,u) \in rqp_\varepsilon  :   f(X,t,y,z,u)  \in \DD,$
\item  for a.e. $(t,x)$  
\equa A_{Df}(t, x) &:=&\sup_{(y,z,u) \in rqp_\varepsilon}\| (\omega ,s) \mapsto  D_{s,x}  f\left(X(\om),t,y, z,u \right) \|_{L_\infty(\PP\otimes \la)} \\
&\le&  p(t,x).\tion
\end{enumerate}
\end{enumerate}
 As was the case for \ref{A-Df}, if there exists a $p \in L_1( [0,T],\la;  L_2(\R, \delta_0+\nu))$  such that 
$$\| (\omega ,s) \mapsto  D_{s,x}  f\left(X(\om),t,y, z,u \right) \|_{L_\infty(\PP\otimes \la)} \le p(t,x) $$
holds uniformly in $(y,z,u),$ then \ref{A-Df'} is trivially satisfied. \bigskip
Condition \ref{bounded-derivatives} becomes
\begin{enumerate}[label={(A\arabic*')}]
\setcounter{enumi}{7}
\item  for all $t \in [0,T],  \tx  \in D[0,T]$ and $y,z\in \R$, the generalised function (in the sense of distributions on $\mathbb{R}^2$ and using weak derivatives) $$\left(\partial_u f({ \tx }, t,y,z,\cdot) \otimes \partial_{u_1} g(t,\cdot)\right)+1$$ 
is nonnegative.
\end{enumerate}
\end{rem}

\section{A generalisation of the local Lipschitz condition}\label{sec4}

In this section we address the question whether one may  remove    the factor  $\kappa(x)= 1\wedge|x|$  in  
$$G(t,\bu) =  \int_{\R_0}  g(t, {\bf u}(x)) \,(1\wedge|x|)\, \nu(dx)$$
of  the generator $f(t,y,z,G(t,\bu)).$
For this, one could  replace $\kappa(x)$ by  $\kappa_n(x) := 1\wedge|nx|$ and let $n\to \infty.$    Notice that  $\kappa_n(x) \to 1$  for all $x \in \R_0.$
If we consider for example for some $\al >0$ the expression
$$G_{\al,n}(t,\bu):=\int_{\R\setminus\{0\}}\mathcal{H}_{\al}(\bu(x)) \kappa_n(x) \nu(dx) \quad \text{with}  \quad \mathcal{H}_\al(u) = \frac{e^{\al u} -\al u -1}{\al}, $$
then   $|G_{\al,n}(t,\bu)| \le   \frac{e^{\al \|u\|_\infty}}{\al}  \int_{\R\setminus\{0\}} |\bu(x)|^2 \nu(dx)$ for all $n\in \N,$   
so that it seems possible to consider the limit $n\to \infty$ for $ \bu \in L^\infty(\nu) \cap L^2(\nu).$

However, in condition \ref{A-Df} the factor $\| \kappa\|$ appears for the constant $P,$ and since $\| \kappa_n\| \to \infty$ if $\nu(\R_0)=\infty,$ this would lead to $P = \infty.$ \bigskip

Nevertheless, generators including the case  $f(t,y,z)+ \int_{\R\setminus\{0\}}\mathcal{H}_{\al}(\bu(x))\nu(dx)$ have been treated in \cite{BechererI} (see also  \cite{Morlais}). \bigskip

We will consider the following situation: 

Let $\bar{f}$ be a generator satisfying assumptions \ref{Axi}-\ref{bounded-derivatives} (so that Theorem \ref{bounded-sol-x} applies) and define 
\equal \label{new-f}
   \f(t,y, z, {\bf u}) :=  \varphi\left(\bar f\left(t,y, z,  G(t,{\bf u})  \right),  \int_{\R_0}  \mathcal{H}({\bf u}(x))  \nu(dx) \right)
\tionl
with
\equal\label{old-G}
  G(t,{\bf u}) :=  \int_{\R_0}  g(t, {\bf u}(x)) \kappa (x) \nu(dx), \quad \text{ for } \,\, {\bf u} \in L_2(\nu).
\tionl
For the functions $\bar f$, $\mathcal{H}:\R\to\R$ and $\varphi:\R^2\to\R$ we require the following conditions:\smallskip 

\begin{assumption}[A$\mathcal{H}$]\label{assumptionsH} $ $\\
Suppose that $\bar{f}$  satisfies  \ref{Axi}-\ref{new-g-condition} and assume that  $\f$ is given by \eqref{new-f} and \eqref{old-G}.
\begin{enumerate}[label={($\mathcal{H}$\arabic*)}]
\item\label{H1}
Let $\mathcal{H}:\R\to\R$  be such that $\mathcal{H}(0)=0$.
\item\label{H2}  We assume that     $\mathcal{H}:\R\to\R$ and $\varphi:\R^2\to\R$ are continuously differentiable, and the following conditions hold:
$$\forall v,w\in\R: |\partial_v\varphi(v,w)|\leq 1,$$
$$\forall w\in\R: v\mapsto\partial_w\varphi(v,w)\quad\text{is a bounded function.}$$
\item\label{H3}  Instead of \ref{bounded-derivatives}, we impose that for all $t \in [0,T], \tx  \in D[0,T]$ and $w,y,z,u,u'\in \R$ it holds  
$$ -1 \le \partial_v\varphi\left(\bar f\left( \tx, t,y, z,  u  \right),  w \right)\partial_u \bar f( \tx,t,y,z,u) \partial_u g(t,u') $$ and
$$-1 \le \partial_v\varphi\left(\bar f\left( \tx, t,y, z,  u  \right),  w \right)\partial_u \bar f( \tx,t,y,z,u) \partial_u g(t,u')+\partial_w\varphi\left(\bar f\left( \tx, t,y, z,  u  \right),  w \right)\mathcal{H}'(u').$$
\item\label{H4} For any $R'>0$, there is a constant $c_{R'}$ such that $|\mathcal{H}'(u)|\leq c_{R'}|u|$ for all $|u|\le R'.$ 
\end{enumerate}
\end{assumption} 
Note that the  generator $\f$ satisfying (A$\mathcal{H}$)   is not locally Lipschitz in {\bf u} $\in L_2(\nu)$.

\begin{thm}\label{Hbsde}
Under Assumption \ref{assumptionsH} (A$\mathcal{H}$) 
(with notation taken from Assumption \ref{assumptions}) there exists a solution to \eqref{bsde} if $f$ is replaced by \eqref{new-f}. 
The solution processes $(Y,Z,U)$ of this equation have the same bounds which Theorem \ref{bounded-sol-x} states for the solution of the BSDE given by $(\xi,\bar{f})$.
 The solution $(Y,Z,U)$ is unique in the class $ \mathcal{S}_\infty \times  L_\infty(W)\times L_{2,b}(\tilde N) .$ 
\end{thm}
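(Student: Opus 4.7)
The plan is to reduce the statement to Theorem \ref{bounded-sol-x} through a truncation that is rendered inactive by the very bounds Theorem \ref{bounded-sol-x} supplies for the BSDE $(\xi, \bar f)$. Let $R, Q, P$ denote the constants from Assumption \ref{A-Df} attached to $\bar f$, set $M := 2R - 2$, and let $b_M$ be the smooth cutoff of Step~1 in the proof of Theorem \ref{bounded-sol-x}. First I would introduce the auxiliary generator
\begin{align*}
\tilde{\f}(t, y, z, \bu) := \varphi\!\left(\bar f\bigl(t, y, z, G(t, \bu)\bigr),\ \int_{\R_0} \mathcal{H}\bigl(b_M(\bu(x))\bigr)\, \nu(dx)\right).
\end{align*}
By \ref{H4} we have $|\mathcal{H}(b_M(u))| \le c_M |u|^2/2$ and $|\mathcal{H}(b_M(u)) - \mathcal{H}(b_M(u'))| \le c_M(|u| + |u'|)|u-u'|/2$, so the truncated $\mathcal{H}$-integral is well defined for every $\bu \in L_2(\nu)$ and is locally Lipschitz in $\bu$ in the $L_2(\nu)$-norm. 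Combined with \ref{H2} for $\partial_v\varphi$ and $\partial_w\varphi$ and with \ref{Axi}--\ref{new-g-condition} transferred from $\bar f$, one verifies that $\tilde{\f}$ fits the hypotheses of Theorem \ref{bounded-sol-x}; the sign condition \ref{bounded-derivatives} for the chained generator is supplied by the second inequality in \ref{H3}.

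Next, Theorem \ref{bounded-sol-x} applied to $\tilde{\f}$ yields a bounded solution $(Y, Z, U) \in \mathcal{S}_\infty \times L_\infty(W) \times L_{2,b}(\tilde N)$. A careful reading of the envelope comparisons of Steps~2 and~3 of that proof, exploiting $\mathcal{H}(0) = 0$ from \ref{H1} and $|\partial_v\varphi|\le 1$ from \ref{H2}, shows that the envelope BSDEs used to bound $Y, Z, U$ can be chosen so that their constants coincide with those for $\bar f$: the linearised comparison of the Malliavin derivative proceeds through the scalar inequalities of \ref{H3} and the chain rule applied to $\varphi(\bar f, \cdot)$, while the first inequality of \ref{H3} secures hypothesis \ref{gamma} needed to apply Theorem \ref{comparison}. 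This would yield $|Y_t| \le R-1$, $|Z_t| \le Q-1$ and $|U_t(x)| \le 2R-2 = M$ almost everywhere, so the truncation $b_M$ is inactive on the solution trajectories, whence $\tilde{\f}(t, Y_t, Z_t, U_t) = \f(t, Y_t, Z_t, U_t)$ a.s.\ and $(Y, Z, U)$ solves the original BSDE with the announced bounds.

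Finally, uniqueness in the class $\mathcal{S}_\infty \times L_\infty(W) \times L_{2,b}(\tilde N)$ follows as in Step~5 of the proof of Theorem \ref{bounded-sol-x}: two such solutions take values in a common uniformly bounded set $\{|y|\le C_1, |z|\le C_2, \|\bu\|_{L_\infty(\nu)}\le C_3\}$; on this set, the local Lipschitz estimate for $\bar f$ (from \ref{Lipschitz}), the Lipschitz estimate for $\varphi$ (from \ref{H2}), and the local Lipschitz bound $|\int(\mathcal{H}(\bu)-\mathcal{H}(\bu'))\, \nu(dx)|\le c_{C_3}(\|\bu\|_{L_2(\nu)} + \|\bu'\|_{L_2(\nu)})\|\bu - \bu'\|_{L_2(\nu)}/2$ from \ref{H4} together make $\f$ Lipschitz in $(y, z, \bu)$ on the relevant window, and uniqueness then follows from Theorem \ref{existence}. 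The hardest part of the argument will be the envelope comparison in the middle step: checking that the constants $R, Q, P$ are not enlarged when passing from $\bar f$ to $\tilde{\f}$; the ingredients that make this possible are $\mathcal{H}(0) = 0$, $|\partial_v\varphi|\le 1$, and the two-sided sign conditions \ref{H3}, which together ensure that the Malliavin-linearised comparison coefficients remain $\ge -1$ and that the resulting envelopes coincide with those used in Theorem \ref{bounded-sol-x} for $\bar f$.
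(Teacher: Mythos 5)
There is a genuine gap at the decisive step, namely the claim that the truncated generator
$\tilde{\f}(t,y,z,\bu)=\varphi\bigl(\bar f(t,y,z,G(t,\bu)),\int_{\R_0}\mathcal{H}(b_M(\bu(x)))\,\nu(dx)\bigr)$
``fits the hypotheses of Theorem \ref{bounded-sol-x}''. Theorem \ref{bounded-sol-x} is formulated (and its proof, in particular the Malliavin chain rule, the linearised comparison and the constant $P\propto\|\kappa\|$ in \ref{A-Df}) only for generators of the composite form \eqref{bsde}, in which the dependence on $\bu$ enters through the single scalar $G(t,\bu)=\int_{\R_0}g(t,\bu(x))\kappa(x)\nu(dx)$ carrying the weight $\kappa\in L_2(\nu)$. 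Your $\tilde\f$ depends on $\bu$ through a second, unweighted $\nu$-integral, and when $\nu(\R_0)=\infty$ this cannot be forced into that framework: the constant weight $1$ is not in $L_2(\nu)$, which is exactly the obstruction the paper points out at the beginning of Section \ref{sec4} ($\|\kappa_n\|\to\infty$ would give $P=\infty$). The pointwise truncation $b_M$ does not repair this. In the original proof the cutoff $b_{2R}$ works because $|G(s,b_{2R}(\bu))-G(s,b_{2R}(\bu'))|\le\rho(2R)\|\kappa\|\,\|\bu-\bu'\|$, i.e.\ $\kappa\in L_2(\nu)$ converts the pointwise bound into a global Lipschitz bound in $L_2(\nu)$. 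For your term one only gets
$\bigl|\int_{\R_0}\bigl(\mathcal{H}(b_M(\bu))-\mathcal{H}(b_M(\bu'))\bigr)d\nu\bigr|\le c\,\bigl\||\bu|\vee|\bu'|\wedge(M+1)\bigr\|\,\|\bu-\bu'\|$,
whose ``constant'' is proportional to an $L_2(\nu)$-norm that is unbounded over $\bu\in L_2(\nu)$ (the constant $M+1$ is not $\nu$-square-integrable); likewise $\int|\mathcal{H}(b_M(\bu))|d\nu\le c\|\bu\|^2$ is quadratic, so even the growth condition \ref{2} of Theorem \ref{existence} fails. The information that would save you — a pointwise majorant $A\in L_2(\nu)\cap L_\infty(\nu)$ for $U$, as in \eqref{U-estimate} — is an output of the theorem you are proving, so invoking it at this stage is circular. (Your uniqueness paragraph, by contrast, is essentially sound, precisely because in the class $L_{2,b}(\tilde N)$ such majorants are part of the hypothesis.)

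The paper avoids this by truncating in the jump variable rather than in $u$: it sets $\mathcal{H}^n(u,x)=\mathcal{H}(u)\min\{1,n|x|\}$, so that each approximating generator $\f^n$ does fall under Theorem \ref{bounded-sol-x} (with a weight of finite $L_2(\nu)$-norm), and — since $\mathcal{H}^n$ is deterministic and $|\partial_v\varphi|\le1$ — the resulting bounds on $(Y^n,Z^n,U^n)$, in particular the $x$-dependent majorant $A_{D\xi}(x)e^{\int_t^Ta}+\int_t^Tp(s,x)e^{\int_t^sa}ds$ for $U^n$, are uniform in $n$. Only then, using \ref{H4} together with this majorant, does it show that $(Y^n,Z^n,U^n)$ is Cauchy (the $\f^n$ evaluated along the solutions behave like Lipschitz functions with constants independent of $n$), passes to the limit by dominated convergence since $\min\{1,n|x|\}\to1$, and identifies the limit as a solution of $(\xi,\f)$. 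If you want to keep your one-shot truncation idea you would have to re-prove the bound/Malliavin machinery for generators with an unweighted $\nu$-integral, which is exactly what the approximation scheme is designed to circumvent; as written, the reduction to Theorem \ref{bounded-sol-x} does not go through.
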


\begin{proof}

We define for $n\in\N$, $$\mathcal{H}^n(u,x) := \mathcal{H}(u) \min\{1,n |x| \} $$
and 
$$ \f^n(s,y, z, {\bf u}) :=  \varphi\left(\bar f\left(s,y, z,  G(s,{\bf u})  \right),  \int_{\R_0}  \mathcal{H}^n({\bf u}(x),x)  \nu(dx)\right).$$  

{\bf Step 1}

 For  $n\in\N$  let   $(Y^n,Z^n,U^n)$ be the unique solution to  $(\xi, \f^n)$ which exists, since by the conditions in 
 Assumption \ref{assumptionsH} (A$\mathcal{H}$)  also Assumption \ref{assumptions} is met. Like in Step 4 of Theorem \ref{bounded-sol-x} we see that for $\bar f$ and $G$ there are Lipschitz functions $\hat{\bar{f}}, \hat{G}$  (in the sense of \ref{Lipschitz} and \ref{new-g-condition} with constant $\rho$) which, if the solution processes are inserted, give the same values as $\bar{f}$ and $G$, $\mathbb{P}\otimes\lambda\otimes\nu$-a.e. 

Moreover, Theorem  \ref{bounded-sol-x} implies that  for all $n\in  \N$
\equa
 |U^n_t(x)| &\le&    A_{D\xi}(x) e^{\int_t^T a(s)ds} + \int_t^T p(s,x) e^{\int_t^s a(r)dr}  ds. \tion
 We also know that    $|U^n_t(x)| \le   R'$ for $\mathbb{P}\otimes\m$-a.a. $(\omega,t,x)$, where $R' = 2R-2$  is the constant bound 
 for $U^n$  appearing in \eqref{U-estimate}.
  The $\mathcal{H}^n$ are a deterministic 
 functions, and therefore do not contribute to the integral term $\int_t^T p(s,x) e^{\int_t^s a(r)dr} ds$ which bounds the size of $U^n_t(x)$. By  \ref{H1} and \ref{H4}, 
 on $\{u: |u| \le   R'\}$ there is $c_{R'}>0$ such that $  |\mathcal{H}(u)| \le{ c_{R'} u^2}$.
  Therefore, we observe by the use of \ref{H2} that
  \begin{align}\label{initial}
  &\left|\f^n(s,Y^n_s, Z^n_s, U^n_s)-\f^n(s,Y^m_s, Z^m_s, U^m_s)\right|\non \\
  =&\left|\varphi\left(\hat{\bar f}(s,Y_s^n,Z^n_s,  \hat{G}(s,U^n_s)), \int_{\R_0}\!  \mathcal{H}^n(U_s^n(x),x)  \nu(dx)\right)\right.\non \\
&\left.\quad\quad\quad-   \varphi\left(\hat{\bar f}(s,Y_s^m,Z^m_s,  \hat{G}(s,U^m_s)),\int_{\R_0}\!  \mathcal{H}^n(U_s^m(x),x)  \nu(dx)\right) \right|\non \\
\leq & \left|\hat{\bar f}(s,Y_s^n,Z^n_s,  \hat{G}(s,U^n_s))-\hat{\bar f}(s,Y_s^m,Z^m_s,  \hat{G}(s,U^m_s))\right|\\
&+\left|\partial_w\varphi\left(\hat{\bar f}(s,Y_s^m,Z^m_s,  \hat{G}(s,U^m_s)),\vartheta\right)\right| \non\\
& \quad\quad \times \bigg ( \int_{\R_0} \left|\mathcal{H}(U_s^n(x)) \min\{1,n |x| \}  - \mathcal{H}(U_s^m(x)) \min\{1,n |x| \} \right|  \nu(dx)  \bigg  ),\non
  \end{align}
  where \begin{align*}
& \les  \les \min\left\{\int_{\R_0}\!  \mathcal{H}^n(U_s^n(x),x)  \nu(dx),\int_{\R_0}\!  \mathcal{H}^n(U_s^m(x),x)  \nu(dx)\right\}\leq\vartheta\\
&\leq \max\left\{\int_{\R_0}\!  \mathcal{H}^n(U_s^n(x),x)  \nu(dx),\int_{\R_0}\!  \mathcal{H}^{n}(U_s^m(x),x)  \nu(dx)\right\}.\end{align*}

Condition \ref{H4} and the bounds for $U^n$ and $U^m$ imply that
\begin{align*}
&\int_{\R_0} \max \{ |\mathcal{H}(U_s^n(x))|,|\mathcal{H}(U_s^m(x))| \}\nu(dx)   \\
&\leq \int_{\R_0} \bigg |  c_{R'}   \left ( A_{D\xi}(x) e^{\int_s^T a(v)dv}+  \int_s^T p(r,x) e^{\int_s^r a(v)dv} dr\!\right )^2\bigg |     \nu(dx)\\
&\leq C_1\left (\|A_{D\xi}\|^2+\int_0^T\| p(r,\cdot)\|^2dr\right)   <\infty 
\end{align*} 
 for a constant $C_1= C_1\Big(c_{R'}, \int_0^T a(v)dv \Big) $.
Therefore, because $\vartheta$ is bounded and by \ref{H2}, there is a constant $K$ such that
$\left|\partial_w\varphi\left(\bar f(s,Y_s^m,Z^m_s,  G(s,U^m_s)),\vartheta\right)\right|\leq K.$ 

The estimate \eqref{initial} can then be continued to
 \begin{align}\label{initial2}
  &\left|\f^n(s,Y^n_s, Z^n_s, U^n_s)-\f^n(s,Y^m_s, Z^m_s, U^m_s)\right|\non \\
  \leq & \left|\hat{\bar f}(s,Y_s^n,Z^n_s,  \hat{G}(s,U^n_s))-\hat{\bar f}(s,Y_s^m,Z^m_s,  \hat{G}(s,U^m_s))\right|+K \int_{\R_0} \left|\mathcal{H}(U_s^n(x))  - \mathcal{H}(U_s^m(x)) \right|  \nu(dx) \non\\
\leq &\left|\hat{\bar f}(s,Y_s^n,Z^n_s,  \hat{G}(s,U^n_s))-\hat{\bar f}(s,Y_s^m,Z^m_s,  \hat{G}(s,U^m_s))\right|+K \int_{\R_0} \left|\mathcal{H}'(\vartheta')\right|\left|U_s^n(x)  - U_s^m(x) \right|  \nu(dx),
  \end{align}
with 
$\min\left\{U_s^n(x), U_s^m(x) \right\}\leq\vartheta'\leq \max\left\{U_s^n(x), U_s^m(x) \right\}.$ 
   Using \ref{H4} and the bound for $U^n$ and $U^m$ again, we estimate  the integral from \eqref{initial2} by 
  \begin{align*}\label{initial3}
  &K \int_{\R_0} \left|\mathcal{H}(U_s^n(x))  - \mathcal{H}(U_s^m(x)) \right|  \nu(dx) \non\\
\leq
& K \int_{\R_0} \left|c_{R'}\max\left\{|U_s^n(x)|, |U_s^m(x)| \right\}\right|\left|U_s^n(x)  - U_s^m(x) \right|  \nu(dx),\non\\
\leq 
&K \int_{\R_0} c_{R'}\left ( A_{D\xi}(x) e^{\int_s^T a(v)dv}+  \int_s^T p(r,x) e^{\int_s^r a(v)dv} dr\!\right ) \left|U_s^n(x)  - U_s^m(x) \right|  \nu(dx).\non
  \end{align*} 
 Finally, by the Cauchy-Schwarz inequality, we arrive at
 \begin{align*}
  &\les \left|\f^n(s,Y^n_s, Z^n_s, U^n_s)-\f^n(s,Y^m_s, Z^m_s, U^m_s)\right|\non \\
\leq &\left|\hat{\bar f}(s,Y_s^n,Z^n_s,  \hat{G}(s,U^n_s))-\hat{\bar f}(s,Y_s^m,Z^m_s,  \hat{G}(s,U^m_s))\right|
\\&+K    C_2 \left (   \| A_{D\xi}\|^2  +  \int_0^T \|p(r,\cdot )\|^2 dr\right)^\frac{1}{2}   \left\|U_s^n - U_s^m \right\|,\non
  \end{align*} 
  for  $C_2= C_2\Big(c_{R'}, \int_0^T a(v)dv \Big) $.
Since $\hat{\bar{f}}$ and $\hat{G}$ satisfy a Lipschitz condition, the above inequality shows that also all 
$\f^n$ applied to $(Y^n,Z^n,U^n)$ and $(Y^m,Z^m,U^m)$ 
behave like Lipschitz functions with Lipschitz coefficients that do not depend on $n$ or $m$.\\
 
Exploiting this property, very similar methods as the standard procedure used in \cite[Proposition 2.2]{bbp} show that
there exists a constant $C>0$ (only dependent on the Lipschitz coefficients of $\hat{\bar{f}}$) such that 

\equal\label{primera}
&&\les \les  \|Y^n -Y^m \|^2_{\mathcal{S}_2}   + \|Z^n -Z^m\|^2_{L_2(W) } +  \|U^n -U^m\|^2_{L_2(\tilde N) }\non \\
&\le&C \E \int_0^T \left|\varphi\left(\bar f(s,Y_s^n,Z^n_s,  G(s,U^n_s)), \int_{\R_0}\!  \mathcal{H}^n(U_s^n(x),x)  \nu(dx)\right)\right.\non \\
&&\left.\quad\quad\quad-  \varphi\left(\bar f(s,Y_s^n,Z^n_s,  G(s,U^n_s)),\int_{\R_0}\!  \mathcal{H}^m(U_s^n(x),x)  \nu(dx)\right) \right|^2 ds.
\tionl
The mean value theorem applied to the second variable of $\varphi$ helps to estimate the latter term by
\equal\label{primeraymedia}
&&   C \E \int_0^T \left|\partial_w\varphi\left(\bar f(s,Y_s^{n},Z^{ n}_s,  G(s,U^{n}_s)),\vartheta\right)\right|^2 \non \\
&& \quad\quad \times \bigg ( \int_{\R_0} \left|\mathcal{H}(U_s^n(x)) \min\{1,n |x| \}  - \mathcal{H}(U_s^n(x)) \min\{1,m |x| \} \right|  \nu(dx)  \bigg  )^2 ds\non \\
& = & C \E \int_0^T \left|\partial_w\varphi\left(\bar f(s,Y_s^{ n},Z^{ n}_s,  G(s,U^{n}_s)),\vartheta\right)\right|^2 \\
&&  \times \bigg ( \int_{\R_0} |\mathcal{H}(U_s^n(x))| |\min\{1,n |x| \}  -  \min\{1,m |x| \} | \, \nu(dx)  \bigg  )^2 ds. \non 
\tionl 

 As in \eqref{initial}-\eqref{initial2} above, we continue to estimate inequalities \eqref{primera} and \eqref{primeraymedia} similarly by
\equal\label{segunda}
&& \les \les \|Y^n -Y^m \|^2_{\mathcal{S}_2}   + \|Z^n -Z^m\|^2_{L_2(W) } +  \|U^n -U^m\|^2_{L_2(\tilde N) } \non \\
&\le& C  K^2 \int_0^T\!\!\!\! \bigg ( \int_{\R_0} \bigg |  c_{R'}   \left ( A_{D\xi}(x) e^{\int_s^T a(v)dv}+  \int_s^T p(r,x) e^{\int_s^r a(v)dv} dr\!\right )^2\bigg | \non \\ 
&& \hspace{10em} \times    |\min\{1,n |x| \}  -  \min\{1,m |x| \}|    \nu(dx) \bigg )^2 ds   \\
& \le&  C K^2  T C_1  \left (   \| A_{D\xi}\|^2  +  \int_0^T \|p(r,\cdot )\|^2 dr\right)^2  <\infty,
 \tionl 
where  $C_1= C_1\Big(c_{R'}, \int_0^T a(v)dv \Big) $. The last estimate allows us to use dominated convergence. Hence, 
$$\|Y^n -Y^m \|^2_{\mathcal{S}_2}   + \|Z^n -Z^m\|^2_{L_2(W) } +  \|U^n -U^m\|^2_{L_2(\tilde N) } \to 0, \quad \text{ as } \quad m,n\to\infty,$$
because of $\lim_{n,m\to\infty} |\min\{1,n |x| \}  -  \min\{1,m |x| \}| =0 \text{ for all } x.$
This proves the existence of the limits
$$Y^n\xrightarrow[\mathcal{S}_2]{} Y, \quad Z^n\xrightarrow[L_2(W)]{} Z,\quad U^n\xrightarrow[L_2(\tilde N)]{} U.$$
Note that the triplet $(Y,Z,U)$ obeys the same bounds as all $(Y^n,Z^n,U^n)$ do.\bigskip 

{\bf Step 2}\\
It remains to show that $(Y,Z,U)$ indeed solves the BSDE given by $(\xi,\f)$:

By the convergence of $(Y^n,Z^n,U^n)$ to $(Y,Z,U)$, we know that for all $t\in {[0,T]}$
\begin{align*}
&Y_t^n\xrightarrow[L_2]{}Y_t,\quad \int_t^T Z_s^ndW_s\xrightarrow[L_2]{}\int_t^T Z_sdW_s,\\
&\int_{{]t,T]}\times\R_0}U^n_s(x)\tilde{N}(ds,dx)\xrightarrow[L_2]{}\int_{{]t,T]}\times\R_0}U_s(x)\tilde{N}(ds,dx),
\end{align*}
so three terms of the BSDE of $(\xi, \f^n)$ already converge to the respective terms of the one given by $(\xi, \f)$.

The last term  which needs
to converge to the right limit is $\int_t^T \f(s,Y^n,Z^n,U^n)ds$. Therefore consider
\begin{align*}
& \hspace{-3em}\left|\int_t^T \f^n(s,Y^n_s,Z^n_s,U^n_{s,\cdot})ds-\int_t^T \f(s,Y_s,Z_s,U_{s,\cdot})ds\right|  \\
\leq&  \int_0^T\left|\bar{f}(s,Y^n_s,Z^n_s,G(s,U^n_s))-\bar{f}(s,Y_s,Z_s,G(s,U_s)\right|ds\\
& + K\int_0^T\int_{\R_0}\left|\mathcal{H}^n(U^n_s(x),x)-\mathcal{H}(U_s(x))\right|\nu(dx)ds,
\end{align*}
where the constant $K$ is chosen in the same way as in the previous step, replacing $\mathcal{H}^m, U^m$ by $\mathcal{H}, U$. 
Having $(Y^n,Z^n,U^n)$ and $(Y,Z,U)$ estimated by the same bounds for all $n\in\mathbb{N}$, like in Step 4 of the proof of Theorem \ref{bounded-sol-x}, $\bar {f}$ acts as Lipschitz function and yields a constant $C$ with
\begin{align*}
& \int_0^T\left|\bar{f}(s,Y^n_s,Z^n_s,G(s,U^n_s))-\bar{f}(s,Y_s,Z_s,G(s,U_s)\right|ds\\
&\leq C\int_0^T\left(|Y^n_s-Y_s|+|Z^n_s-Z_s|+\|U^n_s-U_s\| \right)ds\xrightarrow[L_2]{n\to\infty}0.
\end{align*}
The only term left is now
\begin{align*}
&\int_0^T\int_{\R_0}\left|\mathcal{H}^n(U^n_s(x),x)-\mathcal{H}(U_s(x))\right|\nu(dx)ds=\\
&\int_0^T\int_{\R_0}\left|\mathcal{H}(U^n_s(x))\min\{1,n |x| \}-\mathcal{H}(U_s(x))\right|\nu(dx)ds,
\end{align*}
which approaches zero by  a similar dominated convergence argument as in the inequalities \eqref{primera} and \eqref{segunda} replacing $\min\{1,m |x|\}$ by $1$. 
Thus $(Y,Z,U)$ solves the BSDE $(\xi,\f)$.

{\bf Step 3}\\
This final step shows uniqueness of solutions to this equation in the  class  $ \mathcal{S}_\infty \times  L_\infty(W)\times L_{2,b}(\tilde N) .$ 
Let $(Y^j,Z^j,U^j), j=1,2$ be two solution to the BSDE $(\xi,\f)$ with bounds $(R^j,Q^j, 2R^j)$ as in \eqref{Y-estimate}, \eqref{Z-estimate} and 
\eqref{U-estimate}, and assume that  $A^j\in L_2(\nu)$  such that $|U^j_s(x)| \le A^j(x)$ 
for the respective solution processes.

We start, similarly to the last step (and to Step 4 from the proof of Theorem \ref{bounded-sol-x}), to consider $\bar{f}$ as a Lipschitz function. We look 
at the difference
\begin{align*}
\left|\varphi\left(\bar{f}(s,Y^1_s,Z^1_s,U^1_s)\!,\!\int_{\R_0}\!\mathcal{H}(U^1_s(x))\nu(dx)\right)\!-
\!\varphi\left(\bar{f}(s,Y^2_s,Z^2_s,U^2_s),\!\int_{\R_0}\!\mathcal{H}(U^2_s(x))\nu(dx)\right)\right|
\end{align*}
and estimate it by
\begin{align*}
\left|\bar{f}(s,Y^1_s,Z^1_s,U^1_s)-\bar{f}(s,Y^2_s,Z^2_s,U^2_s)\right|
+K\int_{\R_0}\left|\mathcal{H}(U^1_s(x))-\mathcal{H}(U^2_s(x))\right|\nu(dx). 
\end{align*}
The constant $K$ is chosen similarly as in Step 1, here using the bounds for $U^1$ and $U^2$.
Assumption \ref{H4} and the mean value theorem now imply that the last term is smaller than
\begin{align*}
&\left|\bar{f}(s,Y^1_s,Z^1_s,U^1_s)-\bar{f}(s,Y^2_s,Z^2_s,U^2_s)\right|
\\&
+K\int_{\R_0}c_{2R^1\vee 2R^2}\left(|U^1_s(x)|+|U^2_s(x)|\right)\left|U^1_s(x)-U^2_s(x)\right|\nu(dx).
\end{align*}
By the bounds $A^1, A^2$, and the Cauchy-Schwarz inequality, we arrive at the inequality
\begin{align*}
&\left|\bar{f}(s,Y^1_s,Z^1_s,U^1_s)-\bar{f}(s,Y^2_s,Z^2_s,U^2_s)\right|\\
&+K\int_{\R_0}c_{2R^1\vee 2R^2}\left(A^1(x)+A^2(x)\right)\left|U^1_s(x)-U^2_s(x)\right|\nu(dx)  \\
&\leq \left|\bar{f}(s,Y^1_s,Z^1_s,U^1_s)-\bar{f}(s,Y^2_s,Z^2_s,U^2_s)\right|+Kc_{2R^1\vee 2R^2}  \|A^1+A^2\|   \left\|U^1_s-U^2_s\right\| ,
\end{align*}
which shows that the standard procedure for Lipschitz generators (e.g. the one from \cite[Proposition 4.2]{SteinickeII}) is applicable. 
The uniqueness of the solution then follows.
\end{proof}

\begin{rem}
The setting of Theorem \ref{Hbsde} contains the example $$\mathcal{H}_\al(u) := \frac{e^{\al u} -\al u -1}{\al}$$ for some  fixed $\al>0.$ This type 
of generators  appears in BSDEs related to utility optimisation, see the  work of Morlais \cite{Morlais} and Becherer et al. \cite{BechererI}. 
\end{rem}
{\ch 
\section{ Locally Lipschitz BSDEs and PDIEs}\label{sec5}

In this section we apply our results to the correspondence between L\'evy-driven forward-backward SDEs with locally Lipschitz generators and partial 
differential-integral equations (PDIEs). In the case of Lipschitz generators this has been investigated before e.g. in \cite{bbp} and \cite{NualartSchoutens}, 
and in the Brownian setting for locally Lipschitz generators  in
\cite{CheriditoNam}. We recall the setting of \cite{bbp} (in a one-dimensional version) but 
relax the Lipschitz condition from there to a local Lipschitz condition:
\bigskip

Assume a generator function of the type $$\f(\omega,t,y,z,u)= f( \Psi_t(\omega),t,y,z,u),$$ with 
\begin{enumerate}[label={(\roman*)}]
 \item $f$ is Lipschitz in $y$
  and locally Lipschitz in $(z,u),$ 
 \item $f$ is non-decreasing in $u,$
 \item $f$ has a continuous partial derivative in the first variable bounded by $K>0$, which is also locally Lipschitz in $(y,z,u)$.
 \end{enumerate} 

Here, $(\Psi^t(v))_{(t,v)\in{[0,T]}\times\R}$ denotes a family of forward processes given by the SDEs
$$d\Psi^t_s(v)=b(\Psi^t_s(v))ds+\sigma(\Psi^t_s(v))dW_s+\beta(\Psi^t_{s-}(v),x)\tilde{N}(ds,dx),\quad s\in [t,T],$$
with $\Psi^t_t =v$ and the following requirements:

\begin{enumerate}
\item[(iv)] The functions $b\colon\R\to\R$ and $\sigma\colon\R\to\R$ are continuously differentiable with bounded derivative.
\item[(v)] $\beta\colon\R\times\R_0\to\R$ is measurable, satisfies 
\begin{align*}
\left|\beta(\psi,x)\right|\leq C_\beta(1\wedge|x|),\quad (\psi,x)\in\R\times\R_0
\end{align*}
and is continuously differentiable in $\psi$ with bounded derivative for fixed $x\in\R_0.$
\end{enumerate} 

Define the partial differential-integral operator
\equal\label{PDIE}
-\partial_tu(t,v)-\mathfrak{L}u(t,v)-\tilde{f}(v,t,u(t,v),\partial_vu(t,v),\mathfrak{B}u(t,v))&=&0,\quad(t,v)\in {[0,T]}\times\mathbb{R},\nonumber\\
 u(T,v)&=&\mathfrak{g}(v),\quad v\in\R,
\tionl
for a bounded, Lipschitz function $\mathfrak{g}$ and the operator $\mathfrak{L}=\mathfrak{A}+\mathfrak{K}$ given by 
\equa
&& \mathfrak{A}\varphi(v)=\frac{\sigma^2(v)}{2}\partial_v^2\varphi(v)+b(v)\partial_v\varphi(v),\\
&& \mathfrak{K}\varphi(v)=\int_{\R_0}\left(\varphi(v+\beta(v,x))-\varphi(v)-\beta(v,x) \partial_v\varphi(v)\right)\nu(dx),
\tion
and the integral operator $\mathfrak{B}$ given by
\equa
\mathfrak{B}\varphi(v)=\int_{\R_0}\left(\varphi(v+\beta(v,x))-\varphi((v)\right)\kappa(x)\nu(dx).
\tion

\bigskip

Denote by  $(Y^t(v),Z^t(v), U^t(v,x))$ the solution to the BSDE

\equal\label{BSDEPDIE}
&&Y^t_s(v)=\mathfrak{g}(\Psi^t_T(v))+\int_s^T f\left(\Psi_r^t(v),Y^t_r(v),Z_r^t(v),\int_{\R_0} U^t_r(v,x)\kappa(x)\nu(dx)\right)dr\nonumber\\
&&\quad\quad\quad\quad-\int_s^TZ_r^t(v)dr-\int_{]r,T]\times\R_0}U^t_r(v,x)\tilde{N}(ds,dx),\quad s\in [t,T],
\tionl
for $(t,v)\in [0,T]\times\R$.

For the notion of  a viscosity solution we refer to  \cite[Section 3]{bbp}. The following theorem is a  one-dimensional version of 
\cite[Theorem 3.4]{bbp}, where the generator may be locally Lipschitz in $(z,u).$
\begin{thm}\label{PTIEThm}
With the assumptions from this section, and the additional  conditions  \ref{f-prime}, \ref{growth}, \ref{Df-condition} and 
 \equal \label{boundDPsi} 
 \int_0^T\!\!\left ( \int_{\R_0} \left\|(r,x)\mapsto D_{r,x}\Psi_s\right\|^2_{L^\infty(\lambda \otimes \mathbb{P})}\nu(dx) \right )^{1/2} \!\!\!ds<\infty \,\,  and \,\,
 \left\|(r,x)\mapsto D_{r,x}\Psi_T\right\|^2_{L^\infty(\lambda \otimes \mathbb{P})} \in L_2(\nu), \notag \\
\tionl
the function 
$$u(t,v)=Y_t^t(v), \quad (t,v)\in [0,T]\times\R,$$ with $Y_t^t(v)$ given by \eqref{BSDEPDIE} is a viscosity solution  to the PDIE \eqref{PDIE}.
\end{thm}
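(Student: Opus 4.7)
The plan is to reduce the locally Lipschitz situation to the Lipschitz version already established in \cite[Theorem 3.4]{bbp} by cutting off the generator outside a region known a priori to contain the arguments of the solution processes, in the same spirit as Cheridito--Nam in the Brownian case. First I would verify that the FBSDE \eqref{BSDEPDIE} satisfies the hypotheses of Theorem \ref{bounded-sol-x} uniformly in the initial datum $(t,v)$. Since $\mathfrak{g}$ is bounded and Lipschitz, the terminal value $\xi := \mathfrak{g}(\Psi^t_T(v))$ has $A_\xi \le \|\mathfrak{g}\|_\infty$, and by the chain rule for the Malliavin derivative together with condition \eqref{boundDPsi}, one obtains $A_{D\xi}(x)$ with $\|A_{D\xi}\|<\infty$ uniformly in $(t,v)$. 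Combining the local Lipschitz structure of $f$ with \ref{f-prime}, \ref{growth}, \ref{Df-condition}, the monotonicity of $f$ in $u$, and the smoothness of $b,\sigma,\beta$ from (iv)--(v), one produces a dominating function $p\in L_1([0,T];L_2(\R,\delta_0+\nu))$ as required in \ref{A-Df}. Theorem \ref{bounded-sol-x} then supplies universal constants $R,Q$ independent of $(t,v)$ with $|Y^t_s(v)|\le R-1$, $|Z^t_s(v)|\le Q-1$ and $|U^t_s(v,x)|\le 2R-2$ a.s.

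Second, I would truncate: define $\hat f(\psi,t,y,z,u):= f(\psi,t,b_R(y),b_Q(z),b_{P'}(u))$ with smooth monotone cutoffs as in Step~1 of the proof of Theorem \ref{bounded-sol-x}, where $P'$ bounds $\int g(s,U^t_s(v,x))\kappa(x)\nu(dx)$. Then $\hat f$ is globally Lipschitz in $(y,z,u)$, retains monotonicity in $u$ (because $b_{P'}$ is non-decreasing) and the bounded continuous derivative in $\psi$, so it satisfies the full hypotheses of \cite[Theorem 3.4]{bbp}. Since the a priori bounds force the solution of the truncated BSDE to coincide a.s. with $(Y^t(v),Z^t(v),U^t(v,\cdot))$, the Lipschitz result for $\hat f$ gives that $u(t,v)=\hat u(t,v)$ is a viscosity solution of the PDIE \eqref{PDIE} with $\hat f$ in place of $f$.

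Third, I would show that the truncation is invisible in the PDIE itself. The Lipschitz continuity of $u$ in $v$ with constant $Q-1$ (inherited from the uniform bound on $Z$ via a BSDE stability estimate applied to the difference of solutions started from $v$ and $v'$, or equivalently via the Malliavin representation $Z^t_t(v)=\partial_v u(t,v)\sigma(v)$) ensures that at any touching point $(t_0,v_0)$ between $u$ and a smooth test function $\varphi$, one has $|\partial_v\varphi(t_0,v_0)|\le Q-1$, hence $b_Q(\partial_v\varphi(t_0,v_0))=\partial_v\varphi(t_0,v_0)$. Similarly $|\varphi(t_0,v_0)|=|u(t_0,v_0)|\le R-1$ gives $b_R(\varphi(t_0,v_0))=\varphi(t_0,v_0)$, and the nonlocal quantity $\mathfrak{B}\varphi(t_0,v_0)$ is controlled through the representation $U^t_s(v,x)=u(s,\Psi^t_{s-}(v)+\beta(\Psi^t_{s-}(v),x))-u(s,\Psi^t_{s-}(v))$ furnished by Lemma \ref{functionallem} and the Markov property, combined with the bound $|U^t_s(v,x)|\le 2R-2$, which yields $b_{P'}(\mathfrak{B}\varphi(t_0,v_0))=\mathfrak{B}\varphi(t_0,v_0)$ at the touching point. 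Consequently the viscosity inequality for $\hat f$ at $(t_0,v_0)$ coincides with the viscosity inequality for $f$, and $u$ is a viscosity solution to \eqref{PDIE}.

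I expect the last step to be the main obstacle: viscosity solutions are tested against arbitrary smooth $\varphi$, so controlling $|\partial_v\varphi(t_0,v_0)|$ and the nonlocal value $\mathfrak{B}\varphi(t_0,v_0)$ at a touching point requires genuine regularity of $u$, and not merely almost sure bounds on $(Y,Z,U)$. Establishing the Lipschitz continuity of $u$ in $v$ with the correct constant, together with its nonlocal analogue needed for $\mathfrak{B}$, is exactly where the Malliavin regularity assumption \ref{Df-condition} and the new hypothesis \eqref{boundDPsi} are essential: they deliver the stability estimates that transfer the a.s. bounds on the stochastic objects to pointwise bounds on the analytic ones, thereby sealing the identification $\hat f\equiv f$ on the effective range of test functions at touching points.
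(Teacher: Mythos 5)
Your Steps 1 and 2 are exactly the paper's argument: the proof consists of checking \ref{Axi}--\ref{bounded-derivatives} for the data of \eqref{BSDEPDIE} (condition \eqref{boundDPsi} together with the bounded, Lipschitz $\mathfrak{g}$ gives \ref{Axi}; the bounded continuous derivative of $f$ in its first variable combined with \eqref{boundDPsi} gives \ref{A-Df}; \ref{f-prime}, \ref{growth}, \ref{Df-condition} are assumed; the Lipschitz/local Lipschitz hypothesis gives \ref{Lipschitz}; $g(t,u)=u$ gives \ref{new-g-condition}; monotonicity of $f$ in $u$ gives \ref{bounded-derivatives}), then invoking Theorem \ref{bounded-sol-x} so that $(Y^t(v),Z^t(v),U^t(v,\cdot))$ may be regarded as the bounded solution of a BSDE with a (truncated) Lipschitz generator, and finally citing \cite[Theorem 3.4]{bbp}. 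The paper does not carry out anything like your Step 3; it treats the passage from the truncated generator back to $f$ as immediate.

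Your Step 3, however, does not hold up as written. First, the bound $|Z^t_s(v)|\le Q-1$ controls (a version of) $\sigma(\Psi^t_s(v))\,\partial_v u$, not $\partial_v u$, and in the viscosity framework $u$ is not known to be differentiable; a stability estimate for the BSDEs started from $v$ and $v'$ does yield Lipschitz continuity of $u$ in $v$, but with a Gronwall-type constant built from the Lipschitz constants of $\mathfrak{g},b,\sigma,\beta$ and of the truncated generator, not with constant $Q-1$, so the claim $b_Q(\partial_v\varphi(t_0,v_0))=\partial_v\varphi(t_0,v_0)$ is unjustified. Second, and more seriously, at a global maximum point of $u-\varphi$ one only has the one-sided inequality $\mathfrak{B}\varphi(t_0,v_0)\ge\mathfrak{B}u(t_0,v_0)$; the test function may be arbitrarily large away from $v_0$, so no bound on $U$ (that is, on increments of $u$) can confine $\mathfrak{B}\varphi(t_0,v_0)$ to the truncation window, and the asserted identity $b_{P'}(\mathfrak{B}\varphi(t_0,v_0))=\mathfrak{B}\varphi(t_0,v_0)$ fails for suitable $\varphi$. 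The identification of the truncated and the original equation should instead exploit that $f$ (and the cut-offs) are nondecreasing in the nonlocal slot, together with the equivalent formulation of viscosity sub-/supersolutions in \cite{bbp} in which $\mathfrak{B}$ is applied to $u$ itself; along $u$ the a priori bounds \eqref{Y-estimate}--\eqref{U-estimate} do keep all arguments inside the window, while for the gradient slot one needs a genuine Lipschitz estimate for $u$ compatible with the chosen cut-off level (or cut-off levels enlarged accordingly). You are right that this is the delicate point --- the paper passes over it silently --- but the justification you propose for it is not correct.
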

\begin{proof}
By the consequences of Theorem \ref{bounded-sol-x}, the solution  $(Y^t(v),Z^t(v), U^t(v,x))$ may be regarded as solution to a Lipschitz BSDE 
provided that $f$ satisfies \ref{Axi}-\ref{bounded-derivatives}. We show that this is indeed the case.

The boundedness condition \eqref{boundDPsi} on $D\Psi$ and the Lipschitz continuity and boundedness property of $\mathfrak{g}$ imply that \ref{Axi} is satisfied.
Moreover, by the boundedness and differentiability assumption (in the first variable) on $f$, \ref{A-Df} is satisfied. The  conditions 
\ref{f-prime}, \ref{growth}, \ref{Df-condition} were assumed and since $f$ is Lipschitz in $y$ and locally Lipschitz in $(z,u)$, also \ref{Lipschitz} is satisfied. 

In the present generator, the function $g$ is given by $g(t,u)=u$, thus \ref{g-condition} is readily checked and
\ref{bounded-derivatives} follows as $f$ increases in $u$. 

Having verified \ref{Axi}-\ref{bounded-derivatives}, we may now assume that $Y^t(v)$ is a bounded solution to a BSDE with a Lipschitz generator fitting the assumptions of \cite[Theorem 3.4]{bbp}, which in turn guarantees that $(t,v)\mapsto Y^t_t(v)$ solves \eqref{PDIE}.
\end{proof}

\begin{rem} 
The conditions on  
$f$ in this section were taken from  \cite{bbp} and may surely be relaxed for Theorem \ref{PTIEThm} to hold. 
The proof of \cite[Theorem 3.4]{bbp} relies on the comparison theorem for BSDEs, which is valid in a more general setting than required in \cite{bbp}, see e.g. Theorem \ref{comparison}. Note also that the function $\kappa(x) = 1\wedge |x| $ can be generalised like in  \cite{bbp} to a function depending also on the forward process,   $\kappa(\Psi_t(\omega), x), $ under suitable conditions.
\end{rem}

We finish with an example where \eqref{boundDPsi} is satisfied   for  $D_{r,x}\Psi$ with $x=0.$ For  $x\neq 0$ it follows from  Lemma \ref{functionallem} and  Lemma \ref{path-shift}   that  $D_{r,x}\Psi$ is bounded if  $\Psi$  is. 
\newpage 
\begin{example}  We have for $t \le r\le s\le T$ that
 \equa
D_{r,0}\Psi_s &= & \sigma(\Psi_r) + \int_r^s b'(\Psi_u) D_{r,0}\Psi_u  du+   \int_r^s \sigma'(\Psi_u) D_{r,0}\Psi_u   dW_u \\
&&+   \int_{]r,s]\times\R_0} \partial_\psi\beta(\Psi^t_{u-},x)D_{r,0}\Psi_u    \tilde{N}(du,dx) 
\tion
which implies that 
 \equal\label{MderivBd}
D_{r,0}\Psi_s &=&  \sigma(\Psi_r) \exp{\left (\int_r^s b'(\Psi_u)   du+   \int_r^s \sigma'(\Psi_u)   dW_u  - \frac{1}{2} \int_r^s( \sigma'(\Psi_u) )^2  du
   \right )} \notag\\
&& \times \prod_{r < u \le s} \big(1+ \partial_\psi\beta(\Psi_{u-}, \Delta L_u )\big ) \exp{\left (- \int_r^s \int_{\R_0} \partial_\psi\beta(\Psi_{u},x)  \nu (dx) du  \right )}. 
\tionl
We first check conditions to have the stochastic integral $  \int_r^s \sigma'(\Psi_u)   dW_u$ bounded from above. By It\^o's formula,
 \equa
 \log (|\sigma(\Psi_s)|) &=&  \log (|\sigma(\Psi_r)|) +  \int_r^s \sigma'(\Psi_u)   dW_u + \int_r^s \frac{\sigma'(\Psi_u) }{ \sigma(\Psi_u) } b(\Psi_u)    du \\
 && + \frac{1}{2} \int_r^s\sigma(\Psi_u) \sigma''(\Psi_u) - ( \sigma'(\Psi_u) )^2    du \\
 &&+ \int_{]r,s]\times\R_0} [ \log (|\sigma(\Psi_{u-})  +\beta(\Psi_{u-},x)   |)  - \log (|\sigma(\Psi_{u-}) |)]  \tilde{N}(du,dx)  \\
 && +  \int_r^s \int_{\R_0}\Big [ \log (|\sigma(\Psi_{u-})  +\beta(\Psi_{u-},x)   |)  - \log (|\sigma(\Psi_{u-}) |)  \\ &&
 \hspace{15em} -    \frac{\sigma'(\Psi_u) }{ \sigma(\Psi_u) }  \beta(\Psi_{u-},x) \Big ] \nu (dx) du
 \tion
so that 
 \equa
  \int_r^s \sigma'(\Psi_u)   dW_u &=&  \log \left (\frac{|\sigma(\Psi_s)|}{ |\sigma(\Psi_r)|  } \right) \\
  && +  \int_r^s \left [ \frac{( \sigma'(\Psi_u) )^2  }{2}   -  \frac{\sigma'(\Psi_u)  b(\Psi_u)  }{ \sigma(\Psi_u) }   -   \frac{1}{2}   \sigma(\Psi_u) \sigma''(\Psi_u)  \right ] du \\
  &&+\int_r^s \int_{\R_0}    \frac{\sigma'(\Psi_u) }{ \sigma(\Psi_u) }  \beta(\Psi_{u},x) \nu (dx) du \\
  && - \log \Big ( \prod_{r < u \le s} \Big |1 + \frac{\beta(\Psi^t_{u-}, \Delta L_u )}{ \sigma(\Psi_{u-}) } \Big |\Big ), 
 \tion
which is bounded from above if 
\begin{itemize}
\item[*] $ c^{-1} \le  \sigma(\Psi_u)    \le c$ for all $u \in [t,T],$ for some $c>0,$ \\
\item[*] $ \frac{( \sigma'(\Psi_u) )^2  }{2}   -  \frac{\sigma'(\Psi_u)  b(\Psi_u)  }{ \sigma(\Psi_u) }   -   \frac{1}{2}   \sigma(\Psi_u) \sigma''(\Psi_u) + \int_{\R_0}    \frac{\sigma'(\Psi_u) }{ \sigma(\Psi_u) }  \beta(\Psi_{u},x) \nu (dx) \le c'$ for all $u \in [t,T],$ for some $c'>0,$ \\
\item[*] $\beta \ge 0.$
\end{itemize}
 
Additional conditions that guarantee the boundedness of the right hand side of equation \eqref{MderivBd} are 
\begin{itemize}
\item[*] $ b'(\Psi_u) -\frac{(\sigma'(\Psi_u))^2}{2}  \le \tilde{c}$ for all $u \in [t,T],$ for some $\tilde c>0,$ \\
\item[*] $ -1<\partial_\psi \beta(\Psi_u,x)\le 0$ for all $u \in [t,T]$,\\
\item[*] $-\int_\mathbb{R}\partial_\psi\beta(\Psi_u,x)\nu(dx) \le \tilde{c}'$,  for all $u \in [t,T]$, for some $\tilde{c}'>0$.
\end{itemize}
\end{example}
}

 \section*{Acknowledgement}
Alexander Steinicke  is supported by the Austrian Science Fund (FWF): Project F5508-N26, which is
part of the Special Research Program ``Quasi-Monte Carlo Methods: Theory and Applications''.

Christel Geiss would like to thank the  Erwin Schr\"odinger Institute, Vienna, for hospitality and support, where  a part of this work was written.

\appendix \section{Appendix}
\subsection*{Malliavin differentiability for Lipschitz generators}\label{app}

For the terminal value $\xi$ and the function  $\f$ with  
\begin{align*}
\f(\om,t,y,z, \bu)=  f \left(X(\om), t,y, z,      \int_{\R_0}  g(s, \bu (x)) \kappa (x) \nu(dx) \right)
\end{align*} 
we agree upon the following assumptions:  \bigskip

($\A_{\xi}$)  $\xi \in \mathbb{D}_{1,2}.$    \bigskip \\   
(${\A}_f$)  \vspace*{-1.1em}  
\begin{enumerate}
[font=\itshape,labelindent=7mm,leftmargin=*,topsep=0mm,label= { \alph*)} ]
\item  \label{f-meas}
 $f\colon D{[0,T]}\times{[0,T]}\times\R^3 \to \R$ is jointly measurable, adapted to $(\mathcal{G}_t)_{t\in{[0,T]}}$ defined in  \eqref{filtrationG-t},
 and for all $t\in [0,T]$  and  $i=1,2,3,$ \,$\exists  \,\, {\partial_{\eta_i}}f(\tx,t,\eta),$ and the functions
\equa
 \R^3 \ni \eta   \mapsto {\partial_{\eta_i}} f(\tx,t,\eta)
\tion
are  
continuous.
 \item   \label{f(000)-Ltwo}  There exist functions $k_f\in L_1([0,T])$, $K_f\in L_2(W) $, 
  such that 
  $$ |f(X,t,0,0,0)|\leq k_f(t)+K_f(t),\quad\mathbb{P}\text{-a.s.}$$
\item   \label{f-Lip}

 $f$ satisfies the following Lipschitz condition: There exist nonnegative functions \\$a\in L_1([0,T]), {b\in L_2([0,T])}$ such that for all 
 $t\in{[0,T]}, \newline(y,z,u), (\tilde y, \tilde{z},\tilde{u})\in\R^3$
 $$ |f\left(\tx,t, y,z,u \right)- f\left(\tx,t,\tilde{y},\tilde{z} ,\tilde u\right)|\leq a(t) |y- \tilde y|+b(t)(|z-\tilde{z}|+|u-\tilde{u}|).$$  
 \item   \label{f-Mall-diff-bounded}  Assume  there is a nonnegative random field $\Gamma\in\mathrm{L}_2(\lambda\otimes\PP \otimes \m)$, and a nonnegative ${\rho^D\in L_2(\m)}$ such that
 for all random vectors $G=(G_1,G_2,G_3) \in (\mathrm{L}_2)^3$   and      for a.e. $t$ it holds
\equa
 \left|\left(D_{s,x} f\right)(t,G)\right|\le \Gamma_{s,x}(t)+\rho^D_{s,x}|G|,\quad   \mathbb{P}\otimes\m \text{-a.e}.
\tion
where $\left(D_{s,x}f\right)(t,G) := D_{s,x}f(X,t,\eta)\mid_{\eta=G}.$
 \item \label{f-Mall-diff-Lip}
 $f(X,t,\eta)\in\mathbb{D}_{1,2}$  for all $(t,\eta)\in{[0,T]}\times\R^3,$ and  $\forall t\in{[0,T]}$, $\forall N\in\N $ $\
\exists\ K^t_N\in\bigcup_{p> 1}\mathrm{L}_p$ such that  for a.a. $\omega$ 
\begin{equation*}
\begin{split}
&  \forall \eta, \tilde \eta\in B_N(0): \\
&\|\left(D_{.,0}f(X,t,  \eta)\right)(\omega)-\left(D_{.,0}f(X,t,  \tilde \eta)\right)(\omega)\|_{ L_2({[0,T]})}
   <K^t_N(\omega)\left|\eta-  \tilde \eta\right|,
\end{split}
\end{equation*}
where for $D_{.,0}f(X,t,  \eta)$ we always take a progressively measurable version in $t$.

\item  \label{g-condition}  $g:[0,T]\times \R \to \R$ is jointly measurable,
 $g(t,\cdot) \in \mathcal{C}^1(\R)$ with $g(t,0)=0$, bounded derivative $|g'(t,\cdot)|\leq L_g,$ and  $\kappa \in\mathrm{L}_2(\nu).$
\end{enumerate}

For similar results on differentiability of BSDEs with jumps in the L\'evy case, see \cite{ImkellerDelong}, \cite{Delong} or \cite{GeissSteinII}.
The following result  generalises \cite[Theorem 4.4]{GeissSteinII} and -- up to the time delay --  \cite[Theorem 4.1]{ImkellerDelong}.\bigskip

\begin{thm}\label{diffthm}
Assume (${\A}_\xi$) and (${\A}_f$).  Then the following assertions hold.
\begin{enumerate}[label={(\roman*)}]
\item \label{alef}  For $\m$- a.e. $(r,v) \in [0,T]\times \R$  there exists a unique solution $(\mathcal{Y}^{r,v},\mathcal{Z}^{r,v}, 
\mathcal{U}^{r,v}) $ $\in  \mathcal{S}_2\times L_2(W) \times L_2(\tilde{N})$ to the BSDE
\equal 
\mathcal{Y}^{r,v}_t &=& D_{r,v} \xi + \int_t^T F_{r,v}\kla s, \mathcal{Y}^{r,v}_s, \mathcal{Z}^{r,v}_s,  \mathcal{U}^{r,v} _s\mer ds  \non\\ 
&&   -\int_t^T {\mathcal{Z}}^{r,v}_{s}dW_s-\int_{{]t,T]}\times\R_0}\mathcal{U}^{r,v}_{s,x}\tilde N(ds,dx), \quad 0\le r \le t \le T,   \label{cY-eqn} \non  \\
  \mathcal{Y}^{r,v}_s &=& \mathcal{Z}^{r,v}_s =   \mathcal{U}^{r,v}_s = 0,    \quad   0\le s< r\le T , \label{U-V-equation}
\tionl
where   $ \Theta_s :=(Y_s,Z_s, G(s,U_s))$ and
\equa
 \les \les\les F_{r,0} (s,  y,z,\bu)
 &\quad:=& (D_{r,0}  f) (s, \Theta_s) + \partial_y f(s, \Theta_s) y   + \partial_z f(s, \Theta_s) z \\
&& + \partial_u  f (s,\Theta_s) \int_{\R_0}  \partial_u   g(s,  U_s(v)) \bu(v)    \kappa (v) \nu(dv), 
\tion
and  for $v \neq 0,$
\equa
 F_{r,v} (s,   y,z,\bu)&:=& (D_{r,v}  f ) (X, s, \Theta_s )   \\
 && +   f  (X+ v\one_{[r,T]}, s, \Theta_s  +  (y,z,  G(s,U_s + \bu )))    - f (X+ v\one_{[r,T]},s, \Theta_s ). \\
\tion 
\item \label{bet} For the solution $(Y,Z,U)$ of \eqref{bsde} it holds
\equal \label{Y-and-Z-in-D12}
Y, Z   \in L_2([0,T];\DD), \quad U\in L_2([0,T]\times\R_0;\DD),
\tionl
and $D_{r,y}Y$ admits a c\`adl\`ag version for $\m$- a.e. $(r,y) \in  [0,T]\times\R.$
\item \label{gimmel} $(D Y,D Z, D U)$ is a version of $(\mathcal{Y},\mathcal{Z}, \mathcal{U}),$  i.e. for $\m$- a.e. $(r,v)$ it solves
\begin{align} \label{diffrep}
D_{r,v}Y_t = &D_{r,v}\xi+\int_t^T F_{r,v}\left(s,D_{r,v}Y_s, D_{r,v}Z_s,D_{r,v} U_s \right)ds \\
&-\int_t^T D_{r,v}Z_sdW_s-\int_{{]t,T]}\times{\R_0}} D_{r,v} U_{s}(x)\tilde{N}(ds,dx), \quad 0\le r \le t \le T. \non
\end{align}
\item \label{dalet} Setting  $D_{r,v}Y_r(\omega):= \lim_{t\searrow r}D_{r,v}Y_t(\omega) $
for all $(r,v,\omega)$ for which  $ D_{r,v}Y$ is c\`adl\`ag and \\${D_{r,v}Y_r(\omega):=0}$ otherwise, we have that 
{\ch \begin{align*} \left(\left(\E[ D_{r,0}Y_r | \ftn_{r-}] \right)_{r\in{[0,T]}}\right) &\text{ is a version of  }(Z_{r})_{r\in{[0,T]}},\\
\left(\left(\E[ D_{r,v}Y_r  | \ftn_{r-}]  \right)_{r\in{[0,T]}, v\in\R_0}\right) &\text{ is a version of  }(U_{r}(v))_{r\in{[0,T]}, v\in\R_0}.
\end{align*}}
\end{enumerate}
\end{thm}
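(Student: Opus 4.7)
The plan is to use Picard iteration combined with the commutation rule between the Malliavin derivative and the stochastic integrals, following the strategy of \cite{ImkellerDelong} and the refinement in \cite{GeissSteinII} whose proof this theorem slightly strengthens.

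\emph{Step 1 (Picard iteration; differentiability of iterates).} Initialise $(Y^0,Z^0,U^0):=(0,0,0)$ and inductively solve the Lipschitz BSDE with terminal value $\xi$ and driver $s\mapsto f(X,s,Y^n_s,Z^n_s,G(s,U^n_s))$; by Theorem \ref{existence} and the classical Picard contraction, $(Y^n,Z^n,U^n)$ converges to $(Y,Z,U)$ in $\mathcal{S}_2\times L_2(W)\times L_2(\tilde N)$. By induction on $n$, each iterate lies in $\DD$: the terminal value does by $(\mathcal{A}_\xi)$, the Brownian direction uses the chain rule together with the partial derivatives in \ref{f-meas}, the local Lipschitz bound \ref{f-Mall-diff-Lip} on $D_{\cdot,0}f$ and $\partial_u g$ from \ref{g-condition}, while the jump direction uses the difference-operator representation \eqref{xieq}. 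Commutation of $D$ with the $W$- and $\tilde N$-integrals then gives $Y^{n+1}_t\in\DD$, and the chain rule yields explicit formulas for $D_{r,v}Y^{n+1}$, $D_{r,v}Z^{n+1}$, $D_{r,v}U^{n+1}$.

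\emph{Step 2 (Linear BSDE, uniform estimate, Cauchy property).} These Malliavin derivatives satisfy, for each $(r,v)$, a linear BSDE on $[r,T]$ whose source term is controlled by $\Gamma_{r,v}(s)+\rho^D_{r,v}|\Theta^n_s|$ via \ref{f-Mall-diff-bounded}, where $\Theta^n_s:=(Y^n_s,Z^n_s,G(s,U^n_s))$. Because $\Gamma\in L_2(\lambda\otimes\PP\otimes\m)$ and $\rho^D\in L_2(\m)$, combining standard linear-BSDE $L_2$-estimates with a Gronwall argument, applied $\m$-slice by $\m$-slice, gives a uniform-in-$n$ bound on $(DY^n,DZ^n,DU^n)$ in the space $L_2(\m;\mathcal{S}_2\times L_2(W)\times L_2(\tilde N))$. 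The analogous stability estimate on the increments, exploiting continuity of $D_{\cdot,0}f$ from \ref{f-Mall-diff-Lip} and of $f$ itself from \ref{f-meas}, yields Cauchy convergence in the same space. Closedness of $D$ on $\DD$ identifies the limit with $(DY,DZ,DU)$, proving \eqref{Y-and-Z-in-D12}, and passing to the limit in the iterated Malliavin BSDE (dominated convergence being justified by the bounds just established) establishes \ref{alef}, \ref{bet} and \ref{gimmel}; the càdlàg version of $D_{r,v}Y$ in $t$ is then the standard one for the limiting Lipschitz BSDE.

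\emph{Step 3 (Trace identification for \ref{dalet}).} Letting $t\searrow r$ in \eqref{diffrep}, the driver integral over $[r,t]$ and the stochastic integrals over $]r,t]$ vanish in $L_2$, so that $D_{r,v}Y_r$ equals, up to the (non-predictable) left jump at $r$, the integrand in the Brownian/Poisson martingale representation of $Y$ on $[r,T]$. Conditioning on $\mathcal{F}_{r-}$ selects the predictable version, and uniqueness of the Clark--Ocone representation identifies $\E[D_{r,0}Y_r\mid\mathcal{F}_{r-}]$ with $Z_r$ and $\E[D_{r,v}Y_r\mid\mathcal{F}_{r-}]$ with $U_r(v)$ for $v\ne0$.

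\emph{Main obstacle.} The principal technical difficulty is arranging the a priori estimate in Step 2 so that it is simultaneously uniform in $n$ \emph{and} integrable against $\m(dr,dv)$ using only the square-integrability of $\Gamma$ and $\rho^D$ rather than pointwise bounds, so that no spurious integrability is imposed on the data. The argument must also handle in parallel the two structurally different chain rules for $v=0$ (partial derivatives, local Lipschitz in $\eta$) and $v\ne 0$ (difference operator via Lemma \ref{functionallem}), whose respective Lipschitz controls require separate but coordinated bookkeeping throughout the induction.
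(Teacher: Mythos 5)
Your overall strategy is the same as the paper's (Picard iteration in the spirit of Pardoux--Peng and Delong--Imkeller, Malliavin differentiation of the iterates, closedness of $D$, then identification of the traces), and your Clark--Ocone route for \ref{dalet} is an acceptable repackaging of what the paper does by differentiating the original BSDE at times $t<r$ and comparing \eqref{diffrep} with \eqref{the-other-D-equation}. The genuine gap is in your Step 2, at exactly the point you flag as the ``main obstacle'' but then dispose of in one sentence: the claim that ``the analogous stability estimate on the increments \dots yields Cauchy convergence''. The Malliavin derivative of the $(n+1)$-st iterate solves a BSDE whose coefficients are $(D_{r,v}f)(s,\Theta^n_s)$, $\partial_\eta f(s,\Theta^n_s)$ (and, for $v\neq 0$, shifted difference terms) evaluated along the $n$-th iterate. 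The hypotheses give you Lipschitz control only in the \emph{derivative} variables; the dependence on the iterate index enters through $\eta\mapsto(D_{r,0}f)(t,\eta)$ and $\eta\mapsto\partial_\eta f(X,t,\eta)$, which are merely continuous (the local modulus $K^t_N$ in (${\A}_f$)\,\ref{f-Mall-diff-Lip} lies in some $L_p$, $p>1$, not $L_\infty$). Hence there is no contraction-type estimate between consecutive derivative iterates, and the asserted Cauchy property does not follow from the ingredients you invoke; an attempt to bound $\partial_yf(s,\Theta^n_s)D_{r,v}Y^n_s-\partial_yf(s,\Theta^m_s)D_{r,v}Y^m_s$ or $(D_{r,0}f)(s,\Theta^n_s)-(D_{r,0}f)(s,\Theta^m_s)$ in the needed quantitative way is circular or impossible under the stated assumptions.

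The paper's proof supplies the missing device: it first proves \ref{alef} independently (existence, uniqueness and joint measurability of $(\mathcal{Y},\mathcal{Z},\mathcal{U})$, the latter by approximating $D\xi$ by simple functions), and then estimates the distance of $(DY^{n+1},DZ^{n+1},DU^{n+1})$ to this \emph{fixed} limit triple rather than to the previous iterate. The coefficient mismatch along $\Theta^n$ versus $\Theta$ is collected in a term $\delta_n$ shown to vanish by Vitali's convergence theorem (using continuity of $\partial_\eta f$, of $\eta\mapsto(D_{r,0}f)(t,\eta)$ and of $g'$, together with the domination by $\Gamma+\rho^D|\cdot|$ from (${\A}_f$)\,\ref{f-Mall-diff-bounded}); for $v\neq0$ the small-jump region $\{|v|<\alpha\}$, where only the crude bound is available, is made smaller than $\varepsilon$ uniformly in $n$ thanks to the uniform a priori bound \eqref{boundedYn} and $\Gamma\in L_2(\lambda\otimes\PP\otimes\m)$, $\rho^D\in L_2(\m)$, while on $\{|v|\ge\alpha\}$ the Lipschitz property can be used since $\nu(\{|v|\ge\alpha\})<\infty$. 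This produces a recursion of the form $x_{n+1}\le\varepsilon+C_n+\tfrac12 x_n$ with $C_n\to0$, whence $\limsup_n x_n\le 2\varepsilon$, and only then does closedness of $D$ give \eqref{Y-and-Z-in-D12} and \eqref{diffrep}. Without this comparison-to-the-limit argument (or an equivalent replacement), your Step 2 — and with it the core of the theorem — is unsupported; the remaining steps of your proposal are consistent with the paper.
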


\subsection*{Proof of  Theorem \ref{diffthm}} 

 Let us start with a lemma providing estimates for the Malliavin derivative of the generator.

\begin{lemma}\label{destimlem}
Let $G=(G_1,G_2,G_3) \in (L_2)^3$ and $\Phi=(\Phi_1,\Phi_2,\Phi_3)\in(L_2(\PP\otimes\m))^3$. If $f$ satisfies (${\A}_f$)
it holds for $\mathbb{P} \otimes \m$-a.a. $(\omega,r,v), v \neq 0,$ that
\equal \label{generator-estimate}
&& \hspace{-3em}|f(X+v\one_{[r,T]},t,G+\Phi_{r,v})
-f\left(X,t,G\right)| 
\non\\&&
\le  a(t) \left|\Phi_{1,r,v}\right|+b(t)(|\Phi_{2,r,v}|\!+\!|\Phi_{3,r,v}|)+\Gamma_{r,v}(t)+{\rho}_{r,v}^D |G|.
\tionl
Moreover, for  $G \in (\DD)^3$ 
it holds   $f(X,t,G) \in \DD$ and 
\equal \label{Dgenerator-estimate}
  |D_{r,v}f\left(X,t,G\right)| \!\! &\le& \! 
   a(t) \left|D_{r,v} G_1\right|+b(t)(\left|D_{r,v}G_2\right|+\left|D_{r,v}G_3\right|) +\Gamma_{r,v}(t)+{\rho}_{r,v}^D|G|, 
   \quad \mathbb{P}\otimes \m \text{-a.e.} \non \\
   &&
\tionl

\end{lemma}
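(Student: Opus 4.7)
The first estimate \eqref{generator-estimate} will be handled by a triangle-inequality split and a careful use of Lemma \ref{functionallem}. Specifically, for $v\neq 0$ I would write
\begin{align*}
&|f(X+v\one_{[r,T]},t,G+\Phi_{r,v})-f(X,t,G)|\\
&\leq |f(X+v\one_{[r,T]},t,G+\Phi_{r,v})-f(X+v\one_{[r,T]},t,G)|+|f(X+v\one_{[r,T]},t,G)-f(X,t,G)|.
\end{align*}
The first summand is controlled by assumption \ref{f-Lip}, which is pathwise in the path variable of $f$ and therefore survives the shift $X\mapsto X+v\one_{[r,T]}$, yielding the bound $a(t)|\Phi_{1,r,v}|+b(t)(|\Phi_{2,r,v}|+|\Phi_{3,r,v}|)$. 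The second summand, by Lemma \ref{functionallem} applied to the random variable $f(X,t,\eta)$ for the fixed parameter $\eta=G(\omega)$ (this is where the convention $(D_{r,v}f)(t,G):=D_{r,v}f(X,t,\eta)\big|_{\eta=G}$ enters), equals $|(D_{r,v}f)(t,G)|$, which by \ref{f-Mall-diff-bounded} is bounded by $\Gamma_{r,v}(t)+\rho_{r,v}^D|G|$. Summing the two bounds gives \eqref{generator-estimate}.

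For the Malliavin differentiability claim, assume now $G\in(\DD)^3$. For $v\neq 0$, Lemma \ref{functionallem} combined with the pathwise Lipschitz continuity \ref{f-Lip} shows that the difference quotient
$$f(X+v\one_{[r,T]},t,g_G(X+v\one_{[r,T]}))-f(X,t,G)$$
is square integrable with respect to $\mathbb{P}\otimes\m$ restricted to $[0,T]\times\R_0$ — one uses that $g_G(X+v\one_{[r,T]})=G+D_{r,v}G$ and then \eqref{generator-estimate} with $\Phi_{r,v}=D_{r,v}G$ to get an $L_2$ bound. For the Brownian component ($v=0$) one invokes the chain rule for the Malliavin derivative on the Wiener space, whose applicability in the present jump setting is exactly the reason \ref{f-Mall-diff-Lip} is imposed (this mirrors the argument for \cite[Theorem 3.12]{GeissSteinII}): one first approximates $G$ by smooth cylindrical random vectors, differentiates using the $C^1$-regularity in $\eta$ from \ref{f-meas}, and then passes to the limit using \ref{f-Lip}, \ref{f-Mall-diff-bounded} and the Lipschitz property \ref{f-Mall-diff-Lip} of $D_{\cdot,0}f$ in $\eta$. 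This yields $f(X,t,G)\in\DD$ with chain-rule representation
\begin{align*}
D_{r,0}f(X,t,G)&=(D_{r,0}f)(t,G)+\partial_y f(X,t,G)D_{r,0}G_1\\
&\quad+\partial_z f(X,t,G)D_{r,0}G_2+\partial_u f(X,t,G)D_{r,0}G_3.
\end{align*}

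Finally \eqref{Dgenerator-estimate} follows by treating the two cases separately. For $v\neq 0$, apply the already established \eqref{generator-estimate} with $\Phi_{r,v}:=D_{r,v}G$ to the identity $D_{r,v}f(X,t,G)=f(X+v\one_{[r,T]},t,G+D_{r,v}G)-f(X,t,G)$. For $v=0$, apply the triangle inequality to the chain rule expression above, bound $|(D_{r,0}f)(t,G)|$ by $\Gamma_{r,0}(t)+\rho_{r,0}^D|G|$ via \ref{f-Mall-diff-bounded}, and bound $|\partial_y f|\le a(t)$, $|\partial_z f|,|\partial_u f|\le b(t)$, which are immediate consequences of \ref{f-Lip} together with the continuity of the partial derivatives in \ref{f-meas}.

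The main obstacle, and the only step that is not a manipulation of inequalities, is establishing $f(X,t,G)\in\DD$ and the chain rule for $D_{r,0}$: the definition via chaos expansions does not yield a chain rule directly when the inner random vector $G$ is itself a general Malliavin differentiable functional, and one has to combine the Lipschitz continuity in $\eta$, the $\eta$-Lipschitz property of $D_{\cdot,0}f$ (assumption \ref{f-Mall-diff-Lip}), and an approximation/closedness argument for $D$ — this is exactly the content of \cite[Theorem 3.12]{GeissSteinII}, which I would cite.
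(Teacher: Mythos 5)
Your proposal is correct and follows essentially the same route as the paper: the same triangle-inequality split with the Lipschitz bound \ref{f-Lip} for the shifted path and Lemma \ref{functionallem} plus \ref{f-Mall-diff-bounded} for the difference term, the identity $D_{r,v}f(X,t,G)=f(X+v\one_{[r,T]},t,G+D_{r,v}G)-f(X,t,G)$ for $v\neq 0$, and the chain rule from \cite[Theorem 3.12]{GeissSteinII} for $v=0$ with $|\partial_{\eta_1}f|\le a(t)$, $|\partial_{\eta_i}f|\le b(t)$, $i=2,3$. The only cosmetic difference is that the paper invokes Lemma \ref{path-shift} to justify using the Lipschitz property along the shifted paths, whereas you treat \ref{f-Lip} as a pathwise condition, which is equally fine here.
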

\begin{proof}
According to Lemma \ref{path-shift} we may replace $X$ by  $X+v\one_{[r,T]}$ and get from the  Lip\-schitz property (${\A}_f$) $\ref{f-Lip}$
that     
\begin{align*}
 &\big | f(X+v\one_{[r,T]},t,G+\Phi_{r,v}) - f(X+v\one_{[r,T]},t,G)  \big | 
  \le    a(t) \left|\Phi_{1,r,v}\right|+b(t)\left(\left|\Phi_{2,r,v}\right|+\left|\Phi_{3,r,v}\right|\right)
\end{align*}
for  $\mathbb{P}\otimes \m $-a.e. $(\om, r,v)$ with $v\neq 0.$ 
From (${\A}_f$) $\ref{f-Mall-diff-bounded}$ one concludes then \eqref{generator-estimate}.

For $v \neq 0$ we apply Lemma \ref{functionallem} to get $$D_{r,v}f\left(X,t,G\right)=  f(X+v\one_{[r,T]},t,G+D_{r,v}G)     -  f\left(X,t,G\right),$$
and hence \eqref{Dgenerator-estimate} follows from \eqref{generator-estimate}.
 In the case  $v=0$, \cite[Theorem 3.12]{GeissSteinII} implies  that under the assumptions  (${\A}_f$)   $\ref{f-meas}$ and  (${\A}_f$) $\ref{f-Mall-diff-Lip}$
the Malliavin derivative $D_{r,0}f(X,t,G)$ exists and it holds that 
\equal \label{derivative}
D_{r,0}f(X,t,G) &=&   (D_{r,0}f)(t,G) + \partial_{\eta_1}f (X,t,G)D_{r,0} G_1
 +   \partial_{\eta_2}f(X,t,G)D_{r,0} G_2 \non\\
  &&  +\partial_{\eta_3} f(X,t,G)D_{r,0} G_3
\tionl
for $\mathbb{P}\otimes \lambda$-a.a. $(\omega,r)\in \Omega\times{[0,T]}.$ Relation \eqref{Dgenerator-estimate} follows from conditions (${\A}_f$) $\ref{f-Lip}$
 and  \ref{f-Mall-diff-bounded} using
that the partial derivative $\partial_{\eta_1}f(X,t,\eta)$ is bounded by $a(t)$ and the derivatives $\partial_{\eta_i}f(X,t,\eta)$, $i=2,3$ are bounded by $b(t)$.
\end{proof}

{\bf Proof of Theorem \ref{diffthm}.} 
The core of the proof is to conclude assertion \ref{bet} which is 
done by an iteration argument.
To simplify the notation, in most places we do not  mention the dependency of $f$ on $X.$  \bigskip

\ref{alef} For those $(r,v)$ such that  $D_{r,v}\xi \in L_2$  the existence and uniqueness of a solution 
$(\mathcal{Y}^{r,v},\mathcal{Z}^{r,v},\mathcal{U}^{r,v})$ to \eqref{U-V-equation} follows from Theorem  \ref{existence} 
since $F_{r,v}$ meets the assumptions of the theorem.\\
By the a priori estimate shown in \cite[Proposition 4.2]{SteinickeII}, a solution  to a BSDE satisfying \ref{1} - \ref{3} depends continuously on the terminal condition, i.e. the mapping
\[
L_2 \to  L_2(W)\times L_2(W) \times L_2(\tilde{N})  \colon \zeta \mapsto (\mathcal{Y}^\zeta,\mathcal{Z}^\zeta,\mathcal{U}^\zeta)
\]
is continuous.  The existence of a jointly measurable version of
 \[
 (\mathcal{Y}^{r,v},\mathcal{Z}^{r,v}, \mathcal{U}^{r,v}), \quad  (r,v) \in [0,T]\times \R \]
 follows then by approximating  $D\xi$ (which is measurable  w.r.t.~$(r,v).$) by simple functions in $L_2(\PP \otimes\m).$ 
Joint measurability (for example for $\mathcal{Z}$) in all arguments can be gained by identifying 
  the spaces
$$L_2(\lambda,L_2(\mathbb{P}\otimes\m))\cong L_2(\lambda \otimes\mathbb{P}\otimes\m).$$  The quadratic integrability with respect to $(r,v)$  also follows from 
\cite[Proposition 4.2]{SteinickeII} since $\xi \in \DD .$ \smallskip \\
\ref{bet} We use the iteration scheme introduced in \cite{PP1}. Starting in our setting with $(Y^0,Z^0,U^0)=(0,0,0)$, we get $Y^{n+1}$ by taking the optional projection which implies that  $\PP$-a.s.
\equal \label{Yn}
Y^{n+1}_t=\E _{t}\left(\xi+\int_t^T f\left(s,Y^n_{s}, Z^n_s, G(s,U_s^n)\right)ds\right).
\tionl
 The processes ${Z}^{n+1}, U^{n+1}$
one gets by the martingale representation theorem w.r.t. $dW_s+N(ds,dx)$ (see, for example,  \cite{Applebaum}):
\equal   \label{zndef}
&& \les\xi+\int_0^T  f\left(s,Y^n_{s}, Z^n_s, G(s,U_s^n)\right)ds \\
&=&\E \left(\xi+\int_0^T  f\left(s,Y^n_{s}, Z^n_s, G(s,U_s^n)\right)ds\right)+\int_0^T {Z}^{n+1}_{s}dW_s+\int_{{]0,T]}\times\R_0}{U}^{n+1}_{s,x} \tilde N(ds,dx).\non
\tionl

\textbf{Step 1.}\\
In this first step we will show convergence of the so defined sequence $(Y^n,Z^n,U^n)\to(Y,Z,U)$ in $L_2(W)\times L_2(W)\times L_2(\tilde N)$.

Equations \eqref{Yn} and \eqref{zndef} mean that
\begin{align*}
Y_t^{n+1}= & \,\xi+\int_t^Tf\left(s,Y^n_{s}, Z^n_s, G(s,U_s^n)\right)ds
-\int_t^T Z^{n+1}_sdW_s-\int_{{]t,T]}\times\R_0}{U}^{n+1}_{s,x} \tilde N(ds,dx),
\end{align*}
which can be considered as BSDE with a generator not depending on the $y,z$ and $u$ variables.

With $\Delta Y^{n+1}=Y^{n+2}-Y^{n+1}$ and $\Delta Y^n=Y^{n+1}-Y^n$ and similar notations for the $Z$ and $U$ processes, we get, with the help of It\^o's formula ($\gamma \in L_2([0,T])$ will be determined later)
\begin{align} \label{ito-applied}
&\les e^{\int_0^t \gamma(s)ds}\left|\Delta Y^{n+1}_t\right|^2
+\int_t^T e^{\int_0^s \gamma(\tau)d\tau}\left(\gamma(s)\left|\Delta Y^{n+1}_s\right|^2+|\Delta Z^{n+1}_s|^2+  \|U^{n+1}_s\|^2 \right)ds \non \\
=&\int_t^Te^{\int_0^s \gamma(\tau)d\tau}2\Delta Y^{n+1}_s\left(f(s,Y^{n+1},Z^{n+1}_s,G(s,U^{n+1}_s))-f(s,Y^n_s,Z^n_s,G(s,U^n_s))\right)ds\\
&-M(t).    \non
\end{align}
In this equation, $M(t)$ consists of the stochastic integrals
\begin{align*}
\int_t^Te^{\int_0^s \gamma(\tau)d\tau}2\Delta Y^{n+1}_s \Delta Z^{n+1}_s dW_s+\int_{]t,T]\times\R_0}\left((\Delta U^{n+1}_s+\Delta Y^{n+1}_s)^2-|\Delta U^{n+1}_s|^2\right) \tilde N(ds,dx).
\end{align*}
By a standard procedure (see \cite[Proposition 4.1]{SteinickeII} for the present setting), one  concludes  from $(Y^n,Z^n,U^n)\in L_2(W)\times L_2(W)\times L_2(\tilde N)$  
that $Y^{n+1} \in \mathcal{S}_2.$
This fact, together with the Burkholder-Davis-Gundy inequality implies  that $\E M(t)=0$. 

We use conditions $(\A_f)$ \ref{f-Lip} and \ref{g-condition} and apply Young's inequality to the resulting terms to get 
\begin{align*}
&2 |\Delta Y^{n+1}_s|\, |f(s,Y^{n+1},Z^{n+1}_s,G(s,U^{n+1}_s))-f(s,Y^n_s,Z^n_s,G(s,U^n_s))| \\
&\leq 2|\Delta Y^{n+1}_s|\left(a(s)|\Delta Y^n_s|+(1\vee L_g\|\kappa\|)b(s)(|\Delta Z^n_s|+\|\Delta U^n_s\|)\right) \\
&\leq\left(2\left(a(s)+2(1\vee L_g\|\kappa\|)^2b(s)^2\right)|\Delta Y^{n+1}_s|^2+\frac{a(s)}{2}|\Delta Y^n_s|^2+\frac{|\Delta Z^n_s|^2+\|\Delta U^n_s\|^2}{2}\right).
\end{align*}
The right hand side increases if we replace  the factor $\tfrac{a(s)}{2}$ before $|\Delta Y^n_s|^2$  by $\tfrac{a(s)+1}{2}.$  Thus  \eqref{ito-applied},  after using this inequality and taking expectations turns into
\begin{align*}
&\E e^{\int_0^t \gamma(s)ds}\left|\Delta Y^{n+1}_t\right|^2+\E\int_t^T e^{\int_0^s \gamma(\tau)d\tau}\left(\gamma(s)\left|\Delta Y^{n+1}_s\right|^2+|\Delta Z^{n+1}_s|^2+\|U^{n+1}_s\|^2\right)ds\\
&\leq \E\int_t^Te^{\int_0^s \gamma(\tau)d\tau}  \bigg ( 2( a(s)+(1\vee L_g\|\kappa\| )^2b(s)^2  )|\Delta Y^{n+1}_s|^2\\
&\quad\quad\quad+\frac{a(s)+1}{2}|\Delta Y^n_s|^2+\frac{|\Delta Z^n_s|^2+\|\Delta U^n_s\|^2}{2}\bigg)ds.
\end{align*}
Setting $\gamma=1+3a+2(1\vee L_g\|\kappa\|)^2b^2$   and omitting the first term of the inequality,  we have for $t=0$ that 
\begin{align*}
&\E\int_0^T e^{\int_0^s \gamma(\tau)d\tau}\left((1+a(s))\left|\Delta Y^{n+1}_s\right|^2+|\Delta Z^{n+1}_s|^2+\|U^{n+1}_s\|^2\right)ds\non\\
&\leq \frac{1}{2}\E\int_0^Te^{\int_0^s \gamma(\tau)d\tau}\left((1+a(s))|\Delta Y^n_s|^2+|\Delta Z^n_s|^2+\|\Delta U^n_s\|^2\right)ds.
\end{align*} 
The last inequality states that the sequence $(Y^n,Z^n,U^n)_{n\geq 0}$ is subject to a contraction in the Banach space of all $(\bar{y},\bar{z},\bar{u})\in L_2(W)\times L_2(W)\times L_2(\tilde N)$, such that
\begin{align*} 
\|(\bar{y},\bar{z},\bar{u})\|^2_{1+a,\gamma}:=&\left\|e^{\int_0^\cdot \gamma(\tau)d\tau}(1+a)\bar{y}\right\|^2_{L_2(W)}
+\left\|e^{\int_0^\cdot \gamma(\tau)d\tau}\bar{z}\right\|^2_{L_2(W)} +\left\|e^{\int_0^\cdot \gamma(\tau)d\tau}\bar{u}\right\|^2_{L_2(\tilde{N})}<\infty.
\end{align*} 
This norm is stronger than $\sqrt{\|\cdot\|^2_{L_2(W)}+\|\cdot\|^2_{L_2(W)}+\|\cdot\|^2_{L_2(\tilde{N})}}$ on this space, hence the Picard iteration converges to the unique fixed point $(Y, Z, U)$. 

\textbf{Step 2.}\\
Our aim in this step 
is to show that $Y^n, Z^n$ and $U^n$  are  uniformly bounded in $n$ as elements of   $L_2(\lambda ; \DD)$ and $L_2(\lambda\otimes \nu  ; \DD),$ respectively. 
This will follow  from \eqref{boundedYn} below.   \\
We recall the notation for $M$ and $\m$ from \eqref{measureM} and define for $n\geq 0$, $$\underline{Z}^{n}_{t,x}=\begin{cases}Z^n_t, & x=0,\\ U^n_{t}(x), & x\neq 0.\end{cases}$$\\
 Given that $Y^n , Z^n \in L_2( \lambda; \DD)$ and $U^n \in L_2(\lambda\otimes \nu; \DD)$ one can infer that this also 
 holds for $n+1\colon$   Indeed, (${\A}_f$) $\ref{g-condition}$ implies that $G(s,U^n_s) \in \DD$ for a.e. $s$ and 
 \equal \label{g-integral-est}
   \left | D_{r,v} G(s,U^n_s) \right |  \le L_g \|\kappa\| \| D_{r,v} U^n_s   \|.
  \tionl
 From \cite[Theorem 3.12]{GeissSteinII} and Lemma \ref{destimlem} we conclude that $f(X,s,Y^n_s, Z^n_s,  G(s,U^n_s)) \in \DD.$  The above estimate and \eqref{Dgenerator-estimate} as well as the Malliavin differentiation rules shown by Delong and Imkeller in  \cite[ Lemma 3.1. and Lemma 3.2.]{ImkellerDelong}  imply that  $Y^{n+1}$ as defined in \eqref{Yn}   is   in   $L_2( \lambda; \DD).$  Then we conclude  that   both stochastic integrals  in \eqref{zndef}  are in  $ \DD$ and
 \cite[ Lemma 3.3.]{ImkellerDelong}  implies that for the corresponding integrals one has  $Z^{n+1} \in L_2( \lambda; \DD)$ and $U^{n+1} \in L_2(\lambda\otimes \nu; \DD).$
 Especially,  we get for $t\in{[0,T]}$ that  $\PP$ -a.e. 
\equal  \label{D-of-y-n+1}
D_{r,v}Y^{n+1}_t&=&D_{r,v}\xi+\int_t^T D_{r,v}f\left(X,s,Y^n_s, Z^n_s, G(s,U^n_s) \right)ds  \non\\
&&-\int_{{]t,T]}\times\R}D_{r,v}\underline{Z}^{n+1}_{s,x} M(ds,dx), \text{ for } \m \text{ - } a.a. \,(r,v) \in [0,t]\times \R , \non \\
D_{r,v}Y^{n+1}_t&=&0  \quad  \text{ for } \m  \text{ - } a.a. \,(r,v) \in (t,T]\times \R,  \non \\
D_{r,v} \underline{Z}^{n+1}_{t,x}&=&0  \quad \text{ for } \m\otimes\mu \text{ - } a.a. \,(r,v,x) \in (t,T]\times \R^2.
\tionl
Since by \cite[Theorem 4.2.12] {Applebaum}  the process 
$\big (\int_{{]0,t]}\times\R}D_{r,v}\underline{Z}^{n+1}_{s,x}M(ds,dx)\big)_{t\in [0,T]},$ admits a c\`adl\`ag version, we may take a c\`adl\`ag version of both sides. \\
By It{\^o}'s formula, we conclude that for $0< r<t$ and $\beta\in L_1([0,T])$ it holds
\equa
&e^{\int_0^T\beta(s)ds}(D_{r,v}\xi)^2=&e^{\int_0^t\beta(s)ds}(D_{r,v}Y^{n+1}_t)^2+\int_t^T\beta(s) e^{\int_0^s\beta(\tau)d\tau}(D_{r,v}Y^{n+1}_s)^2 ds \\
&&-2\int_t^T e^{\int_0^s\beta(\tau)d\tau}\big [D_{r,v}f\left(X,s,Y^n_s,Z^n_s, G(s,U^n_s) \right)\big]D_{r,v}Y^{n+1}_sds\\
&&+\int_{{]t,T]}\times\R} e^{\int_0^s \beta(\tau)d\tau}[2(D_{r,v}Y^{n+1}_{s-} )D_{r,v}\underline{Z}^{n+1}_{s,x} \\
&& \quad \quad \quad   \quad \quad \quad   \quad          +  \one_{\R_0}(x)  ({D_{r,v}\underline{Z}^{n+1}_{s,x}})^2 ]M(ds,dx)\\
&&+\int_{{]t,T]}\times\R}e^{\int_0^s\beta(\tau)d\tau}(D_{r,v}\underline{Z}^{n+1}_{s,x})^2ds\mu(dx), \quad \mathbb{P}\otimes\m\text{ - } a.e.
\tion
By \eqref{Dgenerator-estimate}, the requirements of  the a priori estimate \cite[Proposition 4.1]{SteinickeII} are met, which shows that $$\E\sup_{t\in {[0,T]}}\left|D_{r,v}Y^{n+1}_{t}\right|^2<\infty, \quad\mathbb{P\otimes\m}\text{-a.e.}$$ Thus, the integral w.r.t.~$M$ is a uniformly integrable martingale and hence has expectation zero. Therefore, using \eqref{D-of-y-n+1}, we have for $0< u<t \le T$ that
\equa
&& \les\E e^{\int_0^t\beta(s)ds}(D_{r,v}Y^{n+1}_t)^2+\E\int_{{]r,T]}\times\R}e^{\int_0^s\beta(\tau)d\tau}(D_{r,v}\underline{Z}^{n+1}_{s,x})^2ds\mu(dx)  \non\\
&\le&
e^{\int_0^T\beta(s)ds }\E(D_{r,v}\xi)^2 
+ 2\int_r^T e^{\int_0^s\beta(\tau)d\tau}\E\left| \big [ D_{r,v}f\left(X,s,Y^n_s,Z^n_s, G(s, U^n_s)\right)\big]D_{r,v}Y^{n+1}_s\right|ds  \non\\
&&-\E\int_r^T \beta(s)e^{\int_0^s\beta(\tau)d\tau}(D_{r,v}Y^{n+1}_s)^2 ds.
\tion
Similar as in Step  1  we estimate the integrand containing the generator. Here we use  Lemma  \ref{destimlem}  and \eqref{g-integral-est}, and then again  Young's inequality:
\equa
 && \les 2\left| \big [ D_{r,v}f\left(X,s,Y^n_s,Z^n_s, G(s,U^n_s)\right) \big ]D_{r,v}Y^{n+1}_s\right| \\
  &  \le 
  & 2\left| D_{r,v}Y^{n+1}_s\right|   \Big( \left|\Gamma_{r,v}(s)\right|  +a(s)\left|D_{r,v}Y^{n}_s\right|  +b(s)\left(\left|D_{r,v}Z^n_s\right|+L_g\|\kappa\| \| D_{r,v} U^n_s   \| \right)\\
&&  \quad\quad \quad +\rho_{r,v}^D\left(|Y^n_s|+|Z^n_s|+L_g\|\kappa\|\|U^n_s\|\right)  \Big) \\
&\le &  \left|\Gamma_{r,v}(s)\right|^2+|\rho^D_{r,v}|^2\left(|Y^n_s|^2+|Z^n_s|^2+\|U^n_s\|^2\right)  +\frac{a(s)}{2}\left|D_{r,v}Y^{n}_s\right|^2+\frac{1}{2}\left|D_{r,v}Z^n_s\right|^2 \\
&& + \frac{1}{2} \| D_{r,v} U^n_s \|^2 +\left(1+2a(s)+(1\vee L_g\|\kappa\|)^2\left(3+2b(s)^2\right)\right)\left| D_{r,v}Y^{n+1}_s\right|^2 .
\tion 

Choosing 
$\beta=2+3a+(1\vee L_g\|\kappa\|)^2(3+2b^2)$
leads to

\equa
&& \les \E\int_r^T e^{\int_0^s\beta(\tau)d\tau}(1+a(s))\left|D_{r,v}Y^{n+1}_s\right|^2 ds  +\E\int_{{]r,T]}\times\R}e^{\int_0^s\beta(\tau)d\tau}\left|D_{r,v}\underline{Z}^{n+1}_{s,x}\right|^2\m(ds,dx)\\
&\leq& e^{\int_0^T\beta(s)ds}\E\left|D_{r,v}\xi\right|^2+\E\int_r^T e^{\int_0^s\beta(\tau)d\tau} \left|\Gamma_{r,v}(s)\right|^2ds\\
&&+|\rho_{r,v}^D|^2 \, \E\int_r^Te^{\int_0^s\beta(\tau)d\tau}|\left(|Y^n_s|^2+|Z^n_s|^2+\|U^n_s\|^2\right)ds\\
&&+\frac{1}{2}\Biggl(\E\int_r^T \!e^{\int_0^s\beta(\tau)d\tau}(1+a(s))\left|D_{r,v}Y^{n}_s\right|^2ds
\left.
+\E\int_{{]r,T]}\times\R}\!\! e^{\int_0^s\beta(\tau)d\tau} \left|D_{r,v}\underline{Z}^{n}_{s,x}\right|^2 \m(ds,dx)\right).
\tion
Since $\|(Y^n,Z^n,U^n)\|_{L_2(W)\times L_2(W)\times L_2(\tilde{N})}$ converges, we have that 
$$  C_{\sup} :=  \sup_{l\geq 0}\E\int_0^T\left(|Y^l_s|^2+|Z^l_s|^2+\|U^l_s\|^2\right)ds < \infty.$$
Finally, we use  \eqref{D-of-y-n+1} to extend the integrals w.r.t.~$ds$ onto $[0,T],$  and conclude by an elementary  elementary recursion inequality (see Lemma A.1 in \cite{GeissSteinII}) that
\equal  \label{boundedYn}
&& \les \int_0^Te^{\int_0^s\beta(\tau)d\tau} \| (1+a(s)) D Y^{n}_s \|_{L_2(\m\otimes \mathbb{P})}^2ds 
+ \int_{{[0,T]}\times\R}e^{\int_0^T\beta(\tau)d\tau}\|D \underline{Z}^{n}_{s,x}\|_{L_2(\m\otimes \mathbb{P})}^2\m(ds,dx) \non\\
 && \le c_{\beta}\left( \| D\xi\|_{L_2(\mathbb{P} \otimes \m)}^2+\|\Gamma\|_{L_2(\lambda\otimes\mathbb{P} \otimes \m)}^2\right )+ c_{\beta} \|\rho^D\|^2_{L_2(\m)} C_{\sup} \quad  \text{ for all } n \in \N.
\tionl
\smallskip

{\bf Step 3.}\\
We now prove that 
\equal \label{diffnorm}
\left\|\mathcal{Y}-D Y^{n+1}\right\|^2_{\mathrm{L}^2(\mathbb{P}\otimes\lambda\otimes\m)}+\left\|\underline{\mathcal{Z}}-D \underline{Z}^{n+1}\right\|^2_{\mathrm{L}^2(\mathbb{P}\otimes(\m)^{\otimes 2})} \to 0, \quad n \to \infty.
\tionl

To show  \eqref {diffnorm}, one can repeat the above computations, now  for the difference $\mathcal{Y}^{r,v}_t-D_{r,v} Y^{n+1}_t,$ to get
\equal \label{expecteditoddiff}
&& \les \E\int_r^T e^{\int_0^s \beta(\tau)d\tau}\beta(s)(\mathcal{Y}^{r,v}_s-D_{r,v}Y^{n+1}_s)^2ds
+\E\int_{{]r,T]}\times\R}e^{\int_0^s \beta(\tau)d\tau}(\underline{\mathcal{Z}}^{r,v}_{s,x}-D_{r,v}\underline{Z}^{n+1}_{s,x})^2ds\mu(dx) \non \\
&\le& \E\int_r^T e^{ \int_0^s \beta(\tau)d\tau}2\left|\mathcal{Y}^{r,v}_s-D_{r,v}Y^{n+1}_s\right|\non\\
&&\quad \times\left|F_{r,v}(s,\mathcal{Y}^{r,v}_s, \mathcal{Z}^{r,v}_s,  \mathcal{U}^{r,v} _s)-D_{r,v}f(s,  Y^n_s,  Z^n_s, G(s,U^n_s )) \right| ds.
\tionl
We first consider the case $v=0.$ By using Lipschitz properties of $f$ (which also imply the boundedness of the partial derivatives) and \eqref{derivative} it follows that
\equa
&&  \les \left|F_{r,0}(s,\mathcal{Y}^{r,0}_s, \mathcal{Z}^{r,0}_s, \mathcal{U}^{r,0}_s)-D_{r,0}f(s,  Y^n_s,  Z^n_s, G(s,U^n_s) )\right| \phantom{ \bigcup } \non \\
&\le& 
  a(s)\left| \mathcal{Y}^{r,0}_s   -D_{r,0}Y^n_s \right| 
   + b(s)\left( \big | \mathcal{Z}^{r,0}_s   -D_{r,0}Z^n_s \big | + L_g\|\kappa\| \big\| \mathcal{U}^{r,0}_s - D_{r,0}U^n_s \big \| \right) \non \\
&&+ \big | \mathcal{Y}^{r,0}_s  \big |   \,  \big |\partial_y  f(s,Y_s, Z_s, G(s,  U_s))   -\partial_y f(s,  Y^n_s,  Z^n_s, G(s,U^n_s )) \big |  \non   + \lambda_n(r,s),
\tion
 where
 \equa
 \lambda_n(r,s) &:= & \big (\big | (D_{r,0}f)(s,Y_s, Z_s, G(s, U_s))  - (D_{r,0}f) (s,  Y^n_s,  Z^n_s, G(s,U^n_s )) \big |\non\\
 &&\wedge  \bigg( 2 \Gamma_{r,0}(s)+{\rho}^D_{r,0}\Big(|Y^n_s|+|Y_s|+  |Z^n_s|+|Z_s|
 +L_g\|\kappa\|  \Big (\left\|U^n_s\right\| + \left \|U_s\right\|\Big)\bigg)\non \\
&& +\big| \mathcal{Z}^{r,0}_s \big| \,  \big |\partial_z f_g(s,Y_s, Z_s,  U_s)    -\partial_z f_g(s,  Y^n_s,  Z^n_s, U^n_s )\big|  \non \phantom{\bigcup}\\
&& +\big\| \mathcal{U}^{r,0}_s \big \| \bigg(  \big |\partial_u  f(s,Y_s, Z_s, G(s,U_s))   -\partial_u f(s,  Y^n_s,  Z^n_s, G(s,U^n_s)) \big| \,  L_g  \|\kappa\| \non \phantom{\bigcup}\\
&& \quad \quad +  b(s)  \left \|  |g'(s,U_s) - g'(s,U^n_s)| \kappa \right \|  \bigg).
\tion
Thus, using Young's inequality again, and the fact that $|\partial_yf(X,s,\eta)|\leq a(s)$, we estimate
$$2\left|\mathcal{Y}^{r,0}_s-D_{r,0}Y^{n+1}_s\right|\non\\
\left|F_{r,0}(s,\mathcal{Y}^{r,0}_s, \mathcal{Z}^{r,0}_s,  \mathcal{U}^{r,0} _s)-D_{r,0}f(s,  Y^n_s,  Z^n_s, G(s, U^n_s)) \right|$$
by
\begin{align*}
&\left(1+4a(s)+2(1\vee L_g^2\|\kappa\| )^2b(s)^2\right)\left|\mathcal{Y}^{r,0}_s-D_{r,0}Y^{n+1}_s\right|^2\\
&+\lambda_n(r,s)^2+\big | \mathcal{Y}^{r,0}_s  \big |^2   \,  \big |\partial_y  f(s,Y_s, Z_s, G(s,U_s))   -\partial_y f(s,  Y^n_s,  Z^n_s, G(s,U^n_s))\big|   \\
&+\frac{1}{2}\left(a(s)\left|\mathcal{Y}^{r,0}_s-D_{r,0}Y^n_s\right|^2+\big | \mathcal{Z}^{r,0}_s   -D_{r,0}Z^n_s \big |^2 + \big\| \mathcal{U}^{r,0}_s - D_{r,0}U^n_s \big \|^2\right).
\end{align*}
We notice that  $C(s) :=  1+4a(s)+2(1\vee L_g^2\|\kappa\| )^2b(s)^2$ from the expression above is in $L_1([0,T])$.  For the case  $v=0,$  we set 

\equa
 &&\delta_n:=\E\int_0^T\int_0^T e^{\int_0^s\beta(\tau)d\tau}  \Big(\big | \mathcal{Y}^{r,0}_s  \big |^2   \big |\partial_y  f(s,Y_s, Z_s, G(s, U_s))   -\partial_y f(s,  Y^n_s,  Z^n_s, G(s,U^n_s ))\big| \non\\
 &&\quad\quad\quad\quad\quad\quad\quad\quad\quad\quad\quad\quad +\lambda_n(r,s)^2 \Big) drds
 \tion
and are now in the position to infer relation \eqref{final-inequality} below  for the r.h.s. of  \eqref{expecteditoddiff} by using  the above inequalities. The fact that  
 $$\delta_n     \to 0  \,\,\text{ for } \,\, n\to \infty
 $$
 can be seen by Vitali's convergence theorem  taking into consideration that $\beta \in L_1([0,T])$ and  $\E\sup_{t\in{[0,T]}}\big | \mathcal{Y}^{r,0}_t  \big |^2<\infty;$ 
that $( Y^n, Z^n, U^n  )$ converges to  $(Y, Z, U)$ in   $L_2(W)\times L_2(W) \times L_2(\tilde N);$ 
 that $  \partial_y f $ is continuous and bounded by  the function $a.$  For the convergence of the integral part containing $\lambda_n(r,s)^2$ we need additionally that $\eta \mapsto (D_{r,0}f)(s,\eta )$  is continuous (which follows from  (${\A}_f$) $\ref{f-Mall-diff-Lip}  $), that $\partial_zf,\partial_uf$ are continuous and bounded by $b,$ and  that $g'$ is continuous and bounded by $L_g.$ Thanks to the minimum in its first term, $\lambda_n(r,s)^2$ is uniformly integrable in $n.$ \smallskip \\
Now we continue with the case $v\neq 0.$  
We  first  realise  that  for a given $\vare >0$ we may choose  $\alpha >0$ small enough such that  for all $n\ge 1$
 \begin{align}\label{smalleps}
& \E\!\!\int_0^T\int_0^T \int_{\{|v| < \al\}} \!\!\!\! e^{ \int_0^T\beta(\tau)d\tau}\left|\mathcal{Y}^{r,v}_s-D_{r,v}Y^{n+1}_s\right|\non\\
 &\times\left|F_{r,v}(s,\mathcal{Y}^{r,v}_s\!, \mathcal{Z}^{r,v}_s\!,  \mathcal{U}^{r,v} _s)-D_{r,v}f(s,  Y^n_s,  Z^n_s, G(s,U^n_s )) \right| \nu(dv)drds < \vare.
\end{align}
This is because  from  \eqref{generator-estimate}, $({\A}_f$) $\ref{f-Mall-diff-bounded}$  and      \eqref{Dgenerator-estimate} we have that
\begin{align*}
             \left|F_{r,v}(s,\mathcal{Y}^{r,v}_s, \mathcal{Z}^{r,v}_s ,\mathcal{U}^{r,v}_s)\right|  \le & \Gamma_{r,v}(s) + a(s)|\mathcal{Y}^{r,v}_s|+b(s)(|\mathcal{Z}^{r,v}_s| + L_g\|\kappa\|\|\mathcal{U}^{r,v}_s \|   )\\
&+ \rho^D_{r,v}(|Y_s|+|Z_s|+L_g\|\kappa\|\|U_s\||)
\end{align*}
and 
\begin{align*}
    \left|D_{r,v}f(s,Y^n_s,Z^n_s, G(s,U^n_s))\right|
           \le & \Gamma_{r,v}(s) + a(s)| D_{r,v} Y^n_s|+ b(s)(|D_{r,v}Z^n_s| + L_g\|\kappa\|_{L_2(\nu)}\|D_{r,v}{U^n}_s \| )\\
 &+ \rho^D_{r,v}(|Y^n_s|+|Z^n_s|+L_g\|\kappa\|\|U^n_s\||),
\end{align*}
 holds. Then, Young's inequality and inequality \eqref{boundedYn} imply the boundedness of the integral in \eqref{smalleps}.

On the set $\{ |v| \ge \alpha\}$ we  use the Lipschitz properties (${\A}_f$) $\ref{f-Lip} $ and  (${\A}_f$) $\ref{g-condition}$    to get the estimate

\equa
&&\les  \left|F_{r,v}(s,\mathcal{Y}^{r,v}_s, \mathcal{Z}^{r,v}_s ,\mathcal{U}^{r,v}_s)-D_{r,v}f(s,Y^n_s,Z^n_s,G(s,U^n_s))\right| \\
&\le& \big |f\left((X+v \one_{[r,T]}),s,Y_s+\mathcal{Y}^{r,v}_s, Z_s+\mathcal{Z}^{r,v}_s, G(s, U_s + \mathcal{U}^{r,v}_s )
 \right) \\
 && \quad - f\left((X+v\one_{[r,T]}),s,Y^n_s + D_{r,v} Y^n_s, Z^n_s+D_{r,v} Z^n_s , G(s,U^n_s + D_{r,v} U^n_s) \right)  \big | \\
&& + \big | f\left(X,s,Y_s, Z_s, G(s, U_s)\right)  -f\left(X,s,Y^n_s, Z^n_s, G(s, U^n_s)\right) \big |\\
&\le&  a(s)  | \mathcal{Y}^{r,v}_s -  D_{r,v} Y^n_s | +b(s)\big [ |\mathcal{Z}^{r,v}_s-D_{r,v}Z^n_s|+ L_g\|\kappa\|\| \mathcal{U}^{r,v}_s - D_{r,v} U^n_s \| \big]\\
&& + 2a(s) |Y_s- Y^n_s|+b(s)\big [ |Z_s -Z^n_s|+ L_g\|\kappa\| \| U_s- U^n_s \| ) \big ]  .
\tion
This helps us to estimate an integrated version of the r.h.s. of \eqref{expecteditoddiff} for any  $ n\in \N$: Using Young's inequality once again, we arrive for $v=0$ and $v \neq 0$   at \begin{align} \label{final-inequality}
& \les \E\!\int_r^T \!\!\int_{[0,T]\times\R} \! e^{ \int_0^T\beta(\tau)d\tau}\left|\mathcal{Y}^{r,v}_s-D_{r,v}Y^{n+1}_s\right|\non\\
 &\times\left|F_{r,v}(s,\mathcal{Y}^{r,v}_s\!, \mathcal{Z}^{r,v}_s\!,  \mathcal{U}^{r,v} _s)-D_{r,v}f(s,  Y^n_s,  Z^n_s, G(s,U^n_s )) \right| \m(dr,dv)  ds \non  \\
 &\le  \frac{1}{2}\E \int_r^T e^{ \int_0^T\beta(\tau)d\tau} \big (a(s)\| \mathcal{Y}_s-  D Y_s^n \|_{L_2(\m)}^2  +  \|\underline{\mathcal{Z}}_{s,.}-D \underline{Z}^n_{s,.}\|_{L_2(\m\otimes \mu)}^2  \big )ds \non \\
 & +  \nu(\{ |v| \ge \al \}) \E \int_r^T e^{ \int_0^T\beta(\tau)d\tau} \big(a(s)|Y_s- Y^n_s|^2+ \|\underline{Z}_{s,.}-\underline{Z}^n_{s,.}\|^2_{L_2(\m)} \big)ds   \non  \\
&+ \delta_n+      \varepsilon+\E \int_r^T e^{ \int_0^T\beta(\tau)d\tau} C(s)\| \mathcal{Y}_s-  D Y_s^{n+1} \|_{L_2(\m)}^2ds.
\end{align}

Choosing  $\beta=1+a(s)+C(s)$ in  \eqref{expecteditoddiff} and applying \eqref{final-inequality}   leads to
\equa
&&\les \left\|\sqrt{(1+a)}(\mathcal{Y}-D Y^{n+1})\right\|^2_{\mathrm{L}^2(\mathbb{P}\otimes\lambda\otimes\m)}+\left\|\underline{\mathcal{Z}}-D \underline{Z}^{n+1}\right\|^2_{\mathrm{L}^2(\mathbb{P}\otimes(\m)^{\otimes 2})}\\
&\le &\!\! \vare\! + \!C_n\! +\!\frac{1}{2}\left( \left\|\sqrt{(1+a)}(\mathcal{Y}-D Y^{n})\right\|^2_{\mathrm{L}^2(\mathbb{P}\otimes\lambda\otimes\m)}\!\!\!\!
  +\left\|\underline{\mathcal{Z}}-D \underline{Z}^{n}\right\|^2_{\mathrm{L}^2(\mathbb{P}\otimes(\m)^{\otimes 2})}\!\right)
\tion
with  
\begin{align*}
C_n&=C_n(\alpha)\\
&=\delta_n+\nu(\{ |v| \ge \al \})\E \int_r^T\!\! e^{ \int_0^T\beta(\tau)d\tau} \big(a(s)|Y_s- Y^n_s|^2\!+\!\|\underline{Z}_{s,.}-\underline{Z}^n_{s,.}\|^2_{L_2(\m)} \big)ds
\end{align*}
 tending to zero if $n\to \infty$ for any fixed $\alpha>0.$ We now use the recursion inequality from  (\cite[Lemma A.1]{GeissSteinII}) and end up with
\begin{equation*}
\begin{split}
&\limsup_{n\to\infty}\left( \left\|\sqrt{(1+a)}(\mathcal{Y}-D Y^{n})\right\|^2_{\mathrm{L}^2(\mathbb{P}\otimes\lambda\otimes\m)}\!\!\!+\left\|\underline{\mathcal{Z}}-D \underline{Z}^{n}\right\|^2_{\mathrm{L}^2(\mathbb{P}\otimes(\m)^{\otimes 2})}\right) \le 2 \vare.\\
\end{split}
\end{equation*}
This implies \eqref{Y-and-Z-in-D12} since $\left\|\sqrt{(1+a)}\mathcal{Y}\right\|_{\mathrm{L}^2(\mathbb{P}\otimes\lambda\otimes\m)}\!\!\!\!\!<\!\infty$ and $\left\|\sqrt{(1+a)}\ \cdot\ \right\|_{\mathrm{L}^2(\mathbb{P}\otimes\lambda\otimes\m)}\geq \|\cdot\|_{\mathrm{L}^2(\mathbb{P}\otimes\lambda\otimes\m)}$. Hence we can take the Malliavin derivative $D_{r,v}$ of  \eqref{bsde} and get  \eqref{diffrep} for $0\le r\le t\le T$  as well as
\equal  \label{the-other-D-equation}
0= D_{r,v}\xi+\int_r^T F_{r,v}\left(s,D_{r,v}Y_s, D_{r,v}Z_s,  D_{r,v} U_s \right)ds - \underline{Z}_{r,v}-\!\!\int_{{]r,T]}\times\R} D_{r,v} \underline{Z}_{s,x}M(ds,dx),
\tionl
for $0\le t  < r \le T.$ By the same reasoning as for  $D_{r,v}Y^n$ we may conclude that  the RHS of \eqref{diffrep} has a c\`adl\`ag version which we take for
$D_{r,v}Y.$\bigskip

\ref{gimmel} This assertion we get  by comparing \eqref{U-V-equation} and  \eqref{diffrep}  because of the  uniqueness of $(\mathcal{Y},\mathcal{Z},\mathcal{U}).$ \bigskip

\ref{dalet}  For the discussion on the measurability of  $\lim_{t \searrow r}D_{r,v}Y_t$ w.r.t.~$(r,v,\om)$  which is needed to  take the {\ch conditional expectation $\E[\,\cdot\, |\ftn_{r-}]$} we refer the reader to  the proof of \cite[Theorem 4.4]{GeissSteinII}.  The assertion follows then from comparing  \eqref{diffrep}  with \eqref{the-other-D-equation} and the uniqueness of solutions. {\ch If the  predictable projections $ {\phantom{\int}}^p\left(\left(D_{r,0}Y_r\right)_{r\in{[0,T]}}\right) $  \,\, and 
${\phantom{\int}}^p\left(\left(D_{r,v}Y_r\right)_{r\in{[0,T]}, v\in\R_0}\right)$  exist,  this has the benefit that one has jointly measurable processes which are unique up to indistinguishability.}
\hfill  $\square$

\bibliographystyle{plain}

\begin{thebibliography}{99}
\bibitem{Alos}
E. Al\`os, J. A. Le\'on and J. Vives, {\it  An anticipating It\^o formula for L\'evy processes}, ALEA Lat. Am. J. Probab. Math. Stat. 4, 285--305, 2008.
\bibitem{Ankirch} S. Ankirchner and P. Imkeller, {\it Quadratic hedging of weather and catastrophe risk by using short term climate predictions}, HU Berlin, Preprint, 2008.
\bibitem{AntonelliMancini} F. Antonelli and C. Mancini, {\it Solutions of BSDEs with jumps and quadratic/locally Lipschitz generator}, Stoch. Proc. Appl. 126, 3124--3144, 2016.
\bibitem{Applebaum} D. Applebaum, {\it L\'evy Processes and Stochastic Calculus}, Cambridge University Press, 2004.
\bibitem{Bahlali} K. Bahlali, M. Eddahbi and Y. Ouknine, {\it Quadratic BSDEs with $L^2$-terminal data. Krylov's inequality, It\^o-Krylov's formula and some existence results},
Ann. Probab.  45,  2377--2397, 
2017.
\bibitem{bbp} G. Barles, R. Buckdahn and \'E. Pardoux, {\it Backward stochastic differential equations and integral-partial differential equations}, Stochastics Stochastics Rep. 60, no. 1-2, 57--83, 1997.
\bibitem{bauer} H. Bauer, {\it Measure and Integration Theory}, de Gruyter, 2001.
\bibitem{BechererI}  D. Becherer, M. B\"uttner and K. Kentia,  {\it On the monotone stability approach to BSDEs with jumps: Extensions, concrete criteria and examples}, 
 {\ch in:  S. Cohen, I. Gy\"ongy, G. dos Reis, D. Siska, L. Szpruch (eds) Frontiers in Stochastic Analysis - BSDEs, SPDEs and their Applications, Springer, 2019.} 
\bibitem{Billing} P. Billingsley, {\it Convergence of probability measures}, John Wiley \& Sons, New York, 1968.
\bibitem{BriandHu} P. Briand and Y. Hu, {\it BSDEs with quadratic growth and unbounded terminal value}, Probab. Theory Relat. Fields  136, 604--618, 
2006.
\bibitem{CheriditoNam} P. Cheridito and K. Nam, {\it BSDEs with terminal conditions that have bounded Malliavin derivative}, Journal of Functional Analysis, 266, 1257--1285, 2014.
\bibitem{delbaen} F. Delbaen, Y. Hu,  and X. Bao, {\it  Backward SDEs with superquadratic growth}, Probab. Theory Relat. Fields 150, 145--192, 2011.
\bibitem{Delong} {\L}. Delong, {\it  Backward Stochastic Differential Equations with Jumps and Their Actuarial and Financial Applications}, Springer, 2013.
\bibitem{ImkellerDelong} {\L}. Delong and P. Imkeller, {\it On Malliavin's differentiability of BSDEs with time delayed generators driven by Brownian motions and Poisson random measures}, Stochastic Process. Appl. 120, 1748-1775, 2010.

\bibitem{delzeith} O. Delzeith, {\it On Skorohod spaces as universal sample path spaces}, \url{https://arxiv.org/abs/math/0412092v1}, 2004.
\bibitem{ElKarouiPengQuenez} N. El Karoui, S. Peng, and M.C. Quenez, {\it Backward Stochastic Differential Equations in Finance}, Math. Finance 7, 1-71, 1997.
\bibitem{GeissSteinII} C. Geiss and A. Steinicke,   {\it Malliavin derivative of random functions and applications to L\'evy driven BSDEs}, Electron. J. Probab. 21,1--28, 2016.
\bibitem{SteinickeII} {\ch C. Geiss and A. Steinicke, {\it Existence, uniqueness and comparison results for BSDEs with L\'evy jumps in an extended monotonic generator setting},
Probability, Uncertainty and Quantitative Risk 3 (9), 2018.}
\bibitem{GeissYlinen} S. Geiss and J. Ylinen, {\it Decoupling on the Wiener Space, Related Besov Spaces, and Applications to BSDEs}, {\ch Memoirs AMS, 2018,} \url{https://arxiv.org/abs/1409.5322} 
\bibitem{ito} K. It\^o, {\it Spectral type of the shift transformation of differential process with stationary increments}, Trans. Amer. Math. Soc. 81, 253--263, 1956.
\bibitem{MaPoRe} T. Mastrolia, D. Possama\"i and A. R\'eveillac, {\it On the Malliavin differentiability of BSDEs,}  Annales de l'Institut Henri Poincar\'e (B) Probabilit\'es et Statistiques, 53,  464--492, 2017.
\bibitem{Meyer} P.\! A. Meyer, {\it Une remarque sur le calcul stochastique d\'ependant d'un param\`etre}, S\'eminaire de probabilit\'es (Strasbourg), tome 13, 199--203, 1979.
\bibitem{Morlais} M.-A. Morlais, {\it A new existence result for quadratic BSDEs with jumps with application to the utility maximization problem}, Stochastic Process. Appl.  120, 1966--1995, 2010.
\bibitem{NualartSchoutens} {\ch D. Nualart and W. Schoutens, {\it Backward stochastic differential equations and Feynman-Kac formula for L\'evy processes, with applications to finance}, Bernoulli 7, no. 5, 761--776, 2001.}
\bibitem{PP1} \'E. Pardoux and S. Peng, {\it Backward Stochastic Differential Equations and Quasilinear Parabolic Partial Differential Equations}, Stochastic partial differential equations and their applications (Charlotte, NC, 1991),  200-217, Lecture Notes in Control and Inform. Sci., 176, Springer, Berlin, 1992.
 \bibitem{suv} J. Sol\'e, F. Utzet and J. Vives, {\it Chaos expansions and Malliavin calculus for L\'evy processes}, Stoch. Anal. Appl., 595--612, Abel Symp., 2, Springer, Berlin, 2007.
\bibitem{suv2} J. Sol\'e, F. Utzet and J. Vives, {\it Canonical L\'evy process and Malliavin Calculus}, Stochastic Process. Appl. 117,  165--187, 2007.
\bibitem{Steinicke} A. Steinicke, {\it  Functionals of a L\'evy Process on Canonical and Generic Probability Spaces},  J. Theoret. Probab., (2) 443--458, 2016. 
\bibitem{StrickerYor} C. Stricker and M. Yor, {\it Calcul stochastique d\'ependant d'un param\`etre}, Z. Wahrsch. Verw. Gebiete, 45(2), 109--133, 1978.

\end{thebibliography}

\end{document}